\newbox\barleftbox
\newbox\barrightbox
\newcommand*{\thethmname}{}
\newtheorem{innerthm}{\thethmname}
\newcommand{\leftstrip}[1]{%
   \valign{##\cr
           \leaders\copy\barleftbox\vfill\cr
           \vbox{\hsize\marginparwidth\advance\hsize-8pt
                 \raggedright\sffamily\footnotesize #1}\cr
   }
}
\newcommand{\rightstrip}[1]{%
   \valign{##\cr
           \vbox{\hsize\marginparwidth\advance\hsize-8pt
                 \raggedright\sffamily\footnotesize #1}\cr
           \leaders\copy\barrightbox\vfill\cr
   }
}
\let\oldmarginpar\marginpar
\renewcommand\marginpar[1]{%
  \oldmarginpar[\leftstrip{#1}]{\rightstrip{#1}}}
\newtheorem{theorem}{Theorem}[section]
\newtheorem{lemma}[theorem]{Lemma}
\newtheorem{e-proposition}[theorem]{Proposition}
\newtheorem{e-definition}[theorem]{Definition}
\newtheorem{remark}{Remark}[section]
\newtheorem{example}{Example}[section]
\newtheorem{theoreme}{Th\'eor\`eme}[section]
\newtheorem{proposition}[theoreme]{Proposition}
\newtheorem{definition}[theoreme]{Definition}
\numberwithin{equation}{section}
\long\def\salta#1{\relax}
 \def\1{\raisebox{2pt}{\rm{$\chi$}}}
\def\z{{\bf z}}
\def\rife#1{(\ref{#1})}
\newcommand{\hide}[1]{}
\newcommand{\R}{{\mathbb R}}
\newcommand{\N}{{\mathbb N}}
\renewcommand{\H}{{\mathcal H}}
\renewcommand{\epsilon}{{\varepsilon}}
\newcommand{\nada}[1]   {}
\def\div{{\rm div}}
\def\bk{\color{black}}
\def\rn{\mathbb{R}^{N}}
\def\re{\mathbb{R}}
\def\be{\begin{equation}}
\def\ee{\end{equation}}
\def\vare{\varepsilon}
\def\dive{{\rm div}}
\def\into{\int_{\Omega}}
\def\intq{\int_Q}
\def\w-1p'{W^{-1,p'}(\Omega)}
\def\w-1pd{W^{-1,p'}(D)}
\def\pw-1p'{L^{p'}(0,T;W^{-1,p'}(\Omega))}
\def\l{\textsl{L}}
\def\dys{\displaystyle}
\def\lp'n{(L^{p'}(\Omega))^{N}}
\def\lio{L^{\infty}(\Omega)}
\def\luo{L^{1}(\Omega)}
\def\lio{L^{\infty}(\Omega)}
\def\car#1{\raise2pt\hbox{$\chi$}_{#1}}
\def\lio{L^{\infty}(\Omega)}
\def\lp'n{(L^{p'}(\Omega))^{N}}
\def\t1p0{T^{1,p}_{0}(\Omega)}
\def\w-1p'{W^{-1,p'}(\Omega)}
\def\pw-1p'{L^{p'}(0,T;W^{-1,p'}(\Omega))}
\def\lil2{L^{\infty}(0,T;L^2 (\Omega))}
\def\l2h10{L^2 (0,T ; H^1_0 ( \Omega ))}
\def\m2{M^{\frac{N(p-1)}{N-1}}(\Omega)}
\def\l{\textsl{L}}
\def\al{\alpha}
\def\vare{\varepsilon}
\def\into{\int_{\Omega}}
\def\intq1{\displaystyle \int_{\Omega \times (0, 1)}}
\def\dys{\displaystyle}
\def\m{\noalign{\medskip}}
\begin{document}

\title[Large Solutions for Parabolic Equations without absorption]{Large solutions for nonlinear parabolic equations without absorption terms}

\author[S. Moll]{Salvador  Moll}

\address[S. Moll]{ Departament d'An\`{a}lisi Matem\`atica,
Universitat de Val\`encia, Valencia, Spain.}
\email{{\tt j.salvador.moll@uv.es }}

\author[F. Petitta]{Francesco Petitta}

\address[F. Petitta]{Dipartimento di Scienze di Base e Applicate
per l' Ingegneria, ``Sapienza", Universit\`a di Roma, Via Scarpa 16, 00161 Roma, Italy.}\email{francesco.petitta@sbai.uniroma1.it}

\keywords{large solutions, $p$-laplacian, total variation flow, entropy solutions.\\
\indent 2010 {\it Mathematics Subject Classification.} 35K20,35K92
35K67.}

\begin{abstract}
In this paper we give a suitable notion of entropy solution of parabolic $p-$laplacian type equations with $1\leq p<2$ which blows up at the boundary of the domain. We prove existence and uniqueness of this type of solutions when the initial data is locally integrable (for $1<p<2$) or integrable (for $p=1$; i.e  the Total Variation Flow case).
\end{abstract}

\maketitle

\tableofcontents

\section{Introduction}

The study of large solutions  for elliptic partial differential equations on bounded domains  has been largely studied since the pioneering papers by Keller (\cite{ke}) and Osserman (\cite{os}); roughly speaking,  large solutions are solutions to some PDEs that go to infinity as one approaches the boundary.  Large solutions are recently come to light because of their intimate connections with some concrete applications in  Control Theory and Stochastic Processes with constraints (see for instance \cite{ll}, \cite{lepo}, and references therein).

To fix the ideas let us consider
$$
\begin{cases}
   -\Delta u  + u^q = 0& \text{in}\ \Omega,\\
 u=+\infty &\text{on}\ \partial\Omega,
  \end{cases}
$$
where $\Omega$ is a bounded subset of $\re{^N}$, $N\geq 2$. If $q>1$, then the absorption term $u^q$ allows to prove local a priori estimates on the solutions which are the key point in order  to prove existence of such solutions that explode at the boundary. This basic idea {has been} generalized in various directions and the existence of large solutions {has been} proved for a vaste amount of problems with absorption terms. As an example, concerning the $p-$laplace operator, the existence of a large solution for problem
$$
\begin{cases}
   -\Delta_p u  + u^q = 0 & \text{in}\ \Omega,\\
 u=+\infty &\text{on}\ \partial\Omega,
  \end{cases}
$$
can be proved provided $q>p-1$ (see \cite{dl}), while a large solution for problem
$$
\begin{cases}
   -\Delta_p u  +u + u|\nabla u|^q = 0& \text{in}\ \Omega,\\
 u=+\infty &\text{on}\ \partial\Omega,
  \end{cases}
$$
does exist provided $p-1<q\leq p$ (see \cite{leo}).   Observe that, the presence of the lower order absorption term is \emph{essential} in order to prove these result; the naive idea to show this fact  is that constant functions are always subsolutions for $   -\Delta_p u =0$, so that no local a priori estimates can be proved without absorption terms.

In the parabolic framework the situation is quite different since the time derivative part of the equation plays itself an absorption role. For instance, in \cite{lp}, the existence of large solutions is proved for problems whose model is
$$
\begin{cases}
    u_t-\Delta_p u + u|\nabla u|^q =0 & \text{in}\ Q_T:=(0,T)\times \Omega,\\
    u=u_0  & \text{on}\ \{0\} \times\Omega,\\
 u=+\infty &\text{on}\ (0,T)\times\partial\Omega,
  \end{cases}
$$
where $u_0$ is a locally integrable function on $\Omega$. Observe that, in this case, in contrast with the elliptic case, no zero lower order terms are needed. We also mention the paper \cite{cv} in which a theory for the so-called \emph{extended solutions} is developed  for Fast-Diffusion equations.

 If, on one hand, existence for these type of problems has been largely investigated, on the other hand   uniqueness is a harder task even in the elliptic case (see for instance \cite{nv} and references therein).

 The present paper addresses to the study of both existence and uniqueness of suitable large solutions for parabolic problems without lower order absorption terms whose model is
 \begin{equation}\label{maine}
\begin{cases}
    u_t-\Delta_p u =0 & \text{in}\ Q_T,\\
    u=u_0  & \text{on}\ \{0\} \times\Omega,\\
 u=+\infty &\text{on}\ (0,T)\times\partial\Omega,
  \end{cases}
\end{equation}
where $u_0\in L^1_{loc}(\Omega)$ is a nonnegative function and $\Omega$ is a bounded open subset of $\rn$ with {Lipschitz} boundary. In the case $p=1$ we will require a little more regularity in the space domain; namely it has to satisfy a uniform interior ball condition; i.e. there exists $s_0>0$ such that for every $x\in\Omega$ with $dist(x,\partial\Omega)<s_0$, there is $z_x\in\partial\Omega$ such that $|x-z_x|=dist(x,\partial \Omega)$ and $B(x_0,s_0)\subset\Omega$ with $x_0:=z_x+s_0\frac{x-z_x}{|x-z_x|}$.
From now on, $s_0$ will denote the radius of the uniform ball condition corresponding to $\Omega$.

As we will discuss later, if $p\geq 2$ then the absorption role (of order $1$) of $u_t$ is too weak to ensure the existence of large solutions.
On the contrary, if $1<p<2$, the possibility of proving local estimates for problems as \rife{maine} was already contained in literature as,  for instance, in \cite{lepe}. Similar arguments {were} used in \cite{BIV} to prove the existence of a so called continuous  large solution for problem \rife{maine} provided the initial data are integrable enough.  Our aim is to give a suitable notion of large solution (namely we will call {it}  \emph{Entropy Large Solution}) and to prove existence and uniqueness of {such a} solution for  problem  \rife{maine} with merely locally integrable data, also including  the case $p=1$, the Total Variation Flow case.

\medskip
There is, however, a striking difference between cases $1<p<2$ and $p=1$. Namely, if the initial data $u_0$ is bounded, 
the solutions of problem \eqref{maine} are uniformly bounded for a fixed $T$ (see Theorem \ref{theoremp1}). This feature is exclusive for the case $p=1$ and thus, we have to understand the Dirichlet condition in a relaxed way {(see condition (\ref{boundary}) in Definition \ref{Defentropy} below)}. In order to be consistent, any notion of solution of problem \eqref{maine} must satisfy $u(t)\in L^1_{loc}(\Omega)$ a.e. $t\in (0,T)$ and must
be an upper barrier to the solutions of the following  approximating problems
 \begin{equation}\label{mainen}
\begin{cases}
    (u_n)_t-\Delta_p u_n =0 & \text{in}\ Q_T,\\
    u_n=u_0  & \text{on}\ \{0\} \times\Omega,\\
 u_n=n &\text{on}\ (0,T)\times\partial\Omega.
  \end{cases}
\end{equation}

In this sense, we show in Proposition \ref{nonexistence} that, for the case $p\geq 2$, we have nonexistence of large solutions with these features.
 \section{Preliminaires and notations}\label{preliminaris}
 In this section we collect the main notation and some useful results we will use in our analysis. {We point out that most of them are only needed in the special case that $p=1$.} Since $T>0$ is fixed, for the sake of simplicity, in what follows we will use the notation $Q=Q_T$.
%

\subsection{Functions of bounded variations and some generalizations}

Let us recall that the natural energy space to study {parabolic problems related with linear growth functionals}
 is the space of functions of bounded
variation. If $\Omega$ is an open subset of $\R^N$, a
function $u \in L^1(\Omega)$ whose gradient $Du$ in the sense of
distributions is a vector valued Radon measure with finite total
mass in $\Omega$ is called a {\it function of bounded variation}.
The class of such functions will be denoted by $BV(\Omega)$ and $|Du|$ will denote the total variation of the measure $Du$.

Moreover,  an ${\mathcal L}^N$-measurable subset $E$ of $\R^N$ has finite perimeter if $\chi_E\in BV(\R^N)$. The perimeter of $E$ is defined by $Per(E) =|D\chi_E|$

For further information and properties concerning functions of bounded variation
we refer to \cite{Ambrosio}, \cite{EG} or \cite{Ziemer}.

\medskip

 We need to consider the following truncature functions.
For $a < b$, let $T_{a,b}(r) := \max(\min(b,r),a)$. As usual, we
denote $T_k = T_{-k, k}$.
Given any function $u$ and $a,b\in\R$ we shall use the notation
$[u\geq a] = \{x\in \R^N: u(x)\geq a\}$, $[a \leq u\leq b] =
\{x\in \R^N: a \leq u(x)\leq b\}$, and similarly for the sets
 $[u
> a]$, $[u \leq a]$, $[u < a]$, etc.

\medskip
Given a real function $f(s)$, we define  its  positive and negative part as, respectively, $f^+ (s)=\max(0,f(s))$ and $f^- (s)=\min(0,f(s))$. We consider the set of truncatures $\mathcal P$ of all nondecreasing continuous
functions $p: \R\to\R$, such that there exists $p'$ except a finite set and $supp( p')$
is compact.
{For our purposes, }we  need to consider the function spaces $$TBV(\Omega):= \left\{
u \in L^1(\Omega)^+  \ :  \ \ T_k(u) \in BV(\Omega), \ \ \forall \ k>0 \right\},$$ {$$TBV_{loc}(\Omega):= \left\{
u \in L^1_{loc}(\Omega)^+  \ :  \ \ T_k(u) \in BV(\Omega), \ \ \forall \ k>0 \right\},$$ }
\noindent and to give a sense to the
Radon-Nikodym derivative (with respect to the Lebesgue measure)
$\nabla u$ of $Du$ for a function $u \in TBV_{loc}(\Omega)$. Using
chain's rule for BV-functions (see for instance \cite{Ambrosio}),
with a similar proof to the one given in Lemma 2.1 of
\cite{B6}, we obtain the following result.

\begin{lemma}\label{WRN}
For every $u \in TBV_{loc}(\Omega)$ there exists a unique measurable
function $v : \Omega \rightarrow \R^N$ such that
\begin{equation}\label{E1WRN}
\nabla T_k(u) = v \1_{[| u | < k]} \ \ \ \ \ {\mathcal
L}^N-{\rm a.e.}, \ \ \forall \ k>0.
\end{equation}
\end{lemma}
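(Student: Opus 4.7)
The plan is to construct $v$ by patching together the classical gradients $\nabla T_k(u)$ coming from the Lebesgue decomposition of each $DT_k(u)$, with consistency across different levels $k$ supplied by the $BV$ chain rule applied to $T_h \circ T_k$.

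First I would observe that for every $k>0$, the hypothesis $T_k(u)\in BV(\Omega)$ yields, via the Lebesgue decomposition of the vector Radon measure $DT_k(u)$, a density $\nabla T_k(u)\in L^1_{\loc}(\Omega,\R^N)$. The technical core is the following consistency statement: for $0<h<k$, the pointwise identity $T_h(u)=T_h(T_k(u))$ together with the chain rule for $BV$ functions (in the Vol'pert--Ambrosio--Dal Maso form), applied to the Lipschitz map $T_h$ whose a.e.\ derivative is $\1_{(-h,h)}$, gives
\[
\nabla T_h(u)\;=\;\1_{[\,|T_k(u)|<h\,]}\,\nabla T_k(u)\;=\;\1_{[\,|u|<h\,]}\,\nabla T_k(u)\qquad \mathcal{L}^N\text{-a.e.}
\]
In particular, letting $h\nearrow k$ one also obtains $\nabla T_k(u)=0$ a.e.\ on $[|u|\ge k]$, and for $h<k$ the densities $\nabla T_h(u)$ and $\nabla T_k(u)$ agree a.e.\ on $[|u|<h]$.

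Next I would fix a sequence $k_n\nearrow\infty$ and set $v(x):=\nabla T_{k_n}(u)(x)$ for $x\in[|u|<k_n]$; the consistency just proved makes this definition independent of $n$, and since $u\in L^1_{\loc}(\Omega)$ is $\mathcal{L}^N$-a.e.\ finite, $v$ is well-defined a.e.\ on $\Omega$. To verify (\ref{E1WRN}) for an arbitrary $k>0$, choose $k_n>k$: on $[|u|<k]$ one has $\nabla T_k(u)=\1_{[|u|<k]}\nabla T_{k_n}(u)=v$, while on $[|u|\ge k]$ both sides vanish. Uniqueness is then immediate, since any other measurable $\tilde v$ satisfying (\ref{E1WRN}) must coincide with $\nabla T_k(u)$ a.e.\ on $[|u|<k]$ for every $k$, hence with $v$ on $\bigcup_n[|u|<k_n]$, which exhausts $\Omega$ up to a null set.

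The only delicate point is the application of the $BV$ chain rule: it is essential that $T_h$ is merely Lipschitz (not $C^1$), so one must invoke the version equating the absolutely continuous part of $D(T_h\circ T_k(u))$ with the a.e.\ derivative $T_h'$ evaluated at $T_k(u)$ times $\nabla T_k(u)$. This is precisely the ingredient used for the analogous statement in \cite{B6} that the authors reference as the model for their argument, and its validity under our hypotheses is standard.
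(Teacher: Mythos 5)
Your proof is correct and follows essentially the same route as the paper, which simply invokes the BV chain rule and the argument of Lemma 2.1 of \cite{B6}: define $\nabla T_k(u)$ as the density of the absolutely continuous part of $DT_k(u)$, use $T_h(u)=T_h(T_k(u))$ with the Ambrosio--Dal Maso chain rule to get consistency, and glue along $[|u|<k_n]$. Only a small wording point: the vanishing of $\nabla T_k(u)$ a.e.\ on $[|u|\ge k]$ is not obtained by ``letting $h\nearrow k$'' but directly from your consistency identity applied with outer level $k$ and a larger inner level $k_n>k$, i.e.\ $\nabla T_k(u)=\1_{[|u|<k]}\nabla T_{k_n}(u)$, exactly as you in fact use in the verification step.
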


Thanks to this result we define $\nabla u$ for a function $u \in TBV_{loc}(\Omega)$ as the
unique function $v$ which satisfies (\ref{E1WRN}). {Obviously, if $w\in W_{\rm loc}^{1,1}(\Omega)$, then the generalized gradient turns out to coincide with the classical distributional one}. This notation will be used throughout
in the sequel.

\medskip

We recall the following result (\cite{ACM4:01}, Lemma 2).

\begin{lemma}\label{CR}
If $u \in TBV(\Omega)$, then $p(u) \in BV(\Omega)$ for every $p \in {\mathcal P}$. Moreover, $\nabla
p(u) = p^{\prime}(u) \nabla u$ \ ${\mathcal L}^N$-{\rm a.e.}
\end{lemma}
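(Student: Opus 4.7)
The plan is to reduce the statement to the standard chain rule for Lipschitz compositions with BV functions, exploiting the fact that $p'$ has compact support. First, since $p\in\mathcal{P}$ has $\operatorname{supp}(p')\subset[-k_0,k_0]$ for some $k_0>0$, the function $p$ is constant on $(-\infty,-k_0]$ and on $[k_0,+\infty)$; being continuous, nondecreasing and piecewise $C^1$ on the compact interval $[-k_0,k_0]$ with finitely many exceptional points, $p$ is therefore globally Lipschitz on $\mathbb{R}$. Moreover the pointwise identity
\begin{equation*}
p(u(x))=p(T_{k_0}(u)(x))\qquad\text{for every }x\in\Omega
\end{equation*}
holds trivially: on $\{|u|\le k_0\}$ because $T_{k_0}(u)=u$, and on $\{|u|>k_0\}$ because $p$ is constant beyond $\pm k_0$.

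Next, since $u\in TBV(\Omega)$ we have $T_{k_0}(u)\in BV(\Omega)$ by assumption. Because $p$ is Lipschitz, the classical BV chain rule (see e.g.\ Ambrosio--Fusco--Pallara) applies to $p\circ T_{k_0}(u)$, yielding $p(u)=p(T_{k_0}(u))\in BV(\Omega)$ together with the a.e.\ identity
\begin{equation*}
\nabla p(u)=p'(T_{k_0}(u))\,\nabla T_{k_0}(u)\qquad \mathcal{L}^N\text{-a.e. in }\Omega,
\end{equation*}
where $\nabla T_{k_0}(u)$ denotes the Radon--Nikodym derivative of $DT_{k_0}(u)$ and $p'$ is extended arbitrarily on its finite exceptional set (this extension is harmless because $\mathcal{L}^N\{x:u(x)\in N\}=0$ matters only up to the null set where $\nabla T_{k_0}(u)=0$ by the locality of the weak gradient on level sets of BV functions).

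The final step is to identify the right-hand side with $p'(u)\nabla u$. By Lemma \ref{WRN}, $\nabla T_{k_0}(u)=\nabla u\cdot\mathbf{1}_{[|u|<k_0]}$ a.e., so the product $p'(T_{k_0}(u))\nabla T_{k_0}(u)$ vanishes outside $\{|u|<k_0\}$. On $\{|u|<k_0\}$ we have $T_{k_0}(u)=u$ and hence $p'(T_{k_0}(u))=p'(u)$ wherever $p'$ exists. Outside $\{|u|<k_0\}$ (that is, on $\{|u|\ge k_0\}$) the derivative $p'(u)$ is either zero by the support condition, or $u$ takes the values $\pm k_0$; in the latter case $u$ attains a constant value on that set, so $\nabla u=0$ a.e.\ there by the standard locality property of the weak gradient on level sets. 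In either region the two sides coincide a.e., which gives $\nabla p(u)=p'(u)\nabla u$ $\mathcal{L}^N$-a.e.

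The only subtle point, and essentially the only thing that needs care, is the treatment of the finite set of non-differentiability of $p$ and of the level set $\{|u|=k_0\}$; both are handled by invoking the locality of the Radon--Nikodym gradient on level sets, so no genuine new computation is required beyond recalling the BV chain rule and Lemma \ref{WRN}.
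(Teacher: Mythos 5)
Your proof is correct, and it is the standard argument: the paper itself does not prove Lemma \ref{CR} but simply recalls it from \cite{ACM4:01} (Lemma 2), where the proof is exactly this reduction — write $p(u)=p(T_{k_0}(u))$ with $\operatorname{supp}(p')\subset[-k_0,k_0]$, apply the Lipschitz chain rule for $BV$ functions, and use Lemma \ref{WRN} together with the locality of the approximate gradient on level sets to identify $p'(T_{k_0}(u))\nabla T_{k_0}(u)$ with $p'(u)\nabla u$. Two small points of hygiene: the Lipschitz continuity of $p$ does not literally follow from the loose definition of $\mathcal P$ given in the paper (one needs $p'$ bounded, which is the intended reading of ``truncatures'' and is how $\mathcal P$ is defined in \cite{ACM4:01}); and since $\nabla u$ is only defined through the truncations, the locality argument on $\{|u|=k_0\}$ should formally be applied to $T_k(u)$ for some $k>k_0$, which gives $\nabla u=\nabla T_k(u)=0$ a.e.\ there — your parenthetical about the preimage of the exceptional set being null is not needed (and not true in general); the locality remark you make right after is the correct justification.
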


\subsection{A generalized Green's formula}\label{GreenAnz}

We shall need several results from \cite{Anzellotti1} (see also \cite{ACMBook}). Let
$$ X(\Omega) = \left\{ \z \in L^{\infty}(\Omega; \R^N) \
:
 \ \div(\z)\in L^1(\Omega) \right\}
$$

 If $\z \in X(\Omega)$ and $w \in BV(\Omega) \cap
L^{\infty}(\Omega)$
we define the functional $(\z,Dw):
C^{\infty}_{0}(\Omega) \rightarrow \R$ by the formula
\begin{equation}\label{defmeasx1}
\langle (\z,Dw),\varphi\rangle := - \int_{\Omega} w \, \varphi \,
\div(\z) \, dx - \int_{\Omega} w \, \z \cdot \nabla \varphi \, dx.
\end{equation}
In \cite{Anzellotti1} it is proved that $(\z,Dw)$ is a Radon measure in $\Omega$  verifying
$$
\int_{\Omega} (\z,Dw) = \int_{\Omega} \z \cdot \nabla w \, dx \ \
\ \ \ \forall \ w \in W^{1,1}(\Omega) \cap L^{\infty}(\Omega).
$$

Moreover, for all $w\in BV(\Omega)\cap L^\infty(\Omega)$,  $(\z,Dw)$ is absolutely continuous with respect to the total variation of $w$ and it holds,
\begin{equation}\label{acota}
\int_{\Omega} (\z,Dw) \leq \|z\|_{\lio}\into |Dw|.
\end{equation}

In \cite{Anzellotti1}, a weak trace on $\partial \Omega$ of the
normal component of  $\z \in X(\Omega)$ is defined.
Concretely, it is proved that there exists a linear operator \
$\gamma : X(\Omega) \rightarrow L^{\infty}(\partial \Omega)$
such that
$$\Vert \gamma(\z) \Vert_{L^{\infty}(\partial\Omega)} \leq \Vert \z \Vert_{L^{\infty}(\Omega; \R^N)}, $$
where $$Ê\gamma(\z) (x) = \z(x) \cdot \nu(x) \ \ \ \ {\rm for \ all} \ x \in
\partial
\Omega \ \ {\rm if} \ \ \z \in C^1(\overline{\Omega}; \R^N).$$ We
shall denote \ $\gamma (\z)(x)$ by $[\z, \nu](x)$. Moreover, the
following {\it Green's formula}, relating the function $[\z, \nu]$
and the measure $(\z, Dw)$, for  $\z \in X(\Omega)$ and  $w \in
BV(\Omega) \cap L^{\infty}(\Omega)$ is established
\begin{equation}\label{Green}
\int_{\Omega} w \ \div (\z) \ dx + \int_{\Omega} (\z, Dw) =
\int_{\partial \Omega} [\z, \nu] w \ d\H^{N-1}.
\end{equation}

\medskip

To make precise our notion of solution we also need to recall the
following definitions given in \cite{ACM4:01}.

\medskip

 We define
the space
$$Z(\Omega):= \left\{ (\z, \xi) \in L^{\infty} (\Omega; \R^N) \times
BV(\Omega)^* \ : \ \ {\rm div}(\z) = \xi \ \ {\rm in} \ \ {\mathcal
D}^{\prime}(\Omega)\right\}.$$ We set $R(\Omega):=
W^{1,1}(\Omega) \cap L^{\infty}(\Omega) \cap C(\Omega)$. For \
$(\z, \xi) \in Z(\Omega)$ and $w \in R(\Omega)$ we define
$$\langle (\z, \xi), w \rangle_{\partial \Omega}:= \langle \xi, w
\rangle_{BV(\Omega)^*,BV(\Omega)} + \int_{\Omega} \z \cdot \nabla
w \ dx.$$ Then, working as in the proof of Theorem 1.1. of
\cite{Anzellotti1}, we obtain that if $w , v \in R(\Omega)$ and $w
= v$  on $\partial \Omega$ one has
\begin{equation}\label{e1EU}
\langle (\z, \xi), w \rangle_{\partial \Omega} = \langle (\z,
\xi), v \rangle_{\partial \Omega} \ \ \ \ \ \forall \ (\z, \xi)
\in Z(\Omega).
\end{equation}
As a consequence of (\ref{e1EU}), we can give the following
definition.
Given \ $u \in BV(\Omega)\cap L^{\infty}(\Omega)$ and
\ $(\z, \xi) \in Z(\Omega)$, we define \ $\langle (\z, \xi), u
\rangle_{\partial \Omega}$ by setting
$$\langle (\z, \xi), u \rangle_{\partial \Omega}:= \langle (\z, \xi), w
\rangle_{\partial \Omega},$$ where $w$ is any function in
$R(\Omega)$ such that $w = u$ on $\partial \Omega$.
In
\cite{ACM4:01} it is defined a weak trace on $\partial\Omega$ of the normal component of $(\z,\xi)$ which we will denote as $[\z, \nu](x)$.

\subsection{The space $\big(L^1(0, T; BV(\Omega)_2)\big)^*$.}

We need to consider the space  $BV(\Omega)_2$, defined as
$BV(\Omega) \cap L^2(\Omega)$ endowed with the norm
$$\Vert v \Vert_{BV(\Omega)_2}: = \Vert v \Vert_{L^2(\Omega)} + \vert
Dv \vert (\Omega).$$ It is easy to see that $L^2(\Omega) \subset
BV(\Omega)_2^*$ and
\begin{equation}\label{e3EU}
\Vert v \Vert_{BV(\Omega)_2^*} \leq \Vert v \Vert_{L^2(\Omega)} \
\ \ \ \ \ \ \ \forall \ v \in L^2(\Omega).
\end{equation}

It is well known (see \cite{Schwartz}) that the dual space
$\big(L^1(0, T; BV(\Omega)_2)\big)^*$ is isometric to the space
$L^{\infty}(0, T; BV(\Omega)_2^*, BV(\Omega)_2)$ of all weakly$^*$
measurable functions $f : [0, T] \rightarrow BV(\Omega)_2^*$, such
that \ $v(f) \in L^{\infty}(]0, T[)$,  where $v(f)$ denotes the
supremum of the set $\{ \vert \langle v, f \rangle_{BV(\Omega)_2, BV(\Omega)_2^*} \vert \ : \
\Vert v \Vert_{BV(\Omega)_2} \leq 1 \}$ in the vector lattice of
measurable real functions. Moreover, the dual pairing of the
isometry is defined by
$$\langle v, f \rangle = \int_0^T \langle v(t), f(t) \rangle_{BV(\Omega)_2, BV(\Omega)_2^*} \ dt,$$
for $v \in L^1(0, T; BV(\Omega)_2)$ and $f \in L^{\infty}(0, T;
BV(\Omega)_2^*, BV(\Omega)_2)$.

\medskip

By $L^1_{w}(0,T;BV(\Omega))$ we denote the space of weakly
measurable functions $v:[0,T] \to BV(\Omega)$ (i.e., $t \in [0,T]
\to \langle v(t),\phi \rangle$ is measurable for every $\phi \in
BV(\Omega)^*$) such that $\int_0^T \| v(t)\|_{BV(\Omega)} \, dt< \infty$.
Observe that, since $BV(\Omega)$ has a separable predual (see
\cite{Ambrosio}), it follows easily that the map $t \in [0,T]\to
\| v(t) \|_{BV(\Omega)}$ is measurable. By  $L^1_{loc, w}(0, T;
BV(\Omega))$ we denote the space of weakly measurable functions
$v:[0,T] \to BV(\Omega)$ such that the map $t \in [0,T]\to |
v(t) |$ is in $L^1_{loc}(]0, T[)$

\smallskip

Let us recall the  following definitions given in \cite{ACM4:01}.

\begin{definition}\label{Def2}
Let $\xi \in \big(L^{1}(0,T;BV(\Omega)_2) \big)^*$. We say that
$\xi$ is {\it the time derivative} in the space
$\big(L^{1}(0,T;BV(\Omega)_2 \big)^*$ of a function $u\in
L^1((0,T) \times \Omega)$ if
$$\int_0^T \langle \Psi(t),\xi(t) \rangle_{BV(\Omega)_2, BV(\Omega)_2^*} dt = - \int_0^T \int_{\Omega}
u(t,x) \Theta(t,x) dx dt$$ for all test functions $\Psi \in
L^1(0,T;BV(\Omega)_2)$ with compact support in time, such that there
exists $\Theta \in L^1_{w}(0,T;BV(\Omega)) \cap L^{\infty}(Q)$
with $\Psi(t) =\displaystyle \int_0^t \Theta(s) ds$, the integral
being taken as a Pettis integral (\cite{DU}).
\end{definition}

Note that if  $w \in L^1(0, T;BV(\Omega)) \cap L^{\infty}(Q)$
and $\z \in L^{\infty}(Q; \R^N)$ is such that there exists $\xi
\in \big(L^{1}(0,T;BV(\Omega) \big)^*$ with ${\rm div}(\z) = \xi$
in ${\mathcal D}{'}(Q)$, we can define, associated to the pair
$(\z, \xi)$, the distribution $(\z, Dw)$ in $Q$ by
\begin{equation}\label{distribucion}
\begin{array}{ll}
\displaystyle\langle (\z, Dw), \phi \rangle :=  - \displaystyle
\int_0^T \langle  w(t) \phi(t),\xi(t) \rangle _{BV(\Omega)_2, BV(\Omega)_2^*}\, dt \\ \\ \dys- \int_0^T
\int_{\Omega} \z(t, x) w(t, x) \nabla_x \phi(t, x) \, dxdt.
\end{array}
\end{equation}
for all $\phi \in {\mathcal D}(Q)$.

\begin{definition}\label{Def3} {\rm Let $\xi\in \big(L^{1}(0,T;BV(\Omega)_2) \big)^*$ and  $\z \in
L^{\infty}(Q; \R^N)$. We say that $\xi = {\rm div}(\z)$ in
$\big(L^{1}(0,T;BV(\Omega)_2 )\big)^*$ if $(\z,Dw)$
 is a Radon measure in $Q$ with
normal boundary values $[\z,\nu]\in L^{\infty}((0,T)\times
\partial \Omega)$, such that
$$
\int_{Q} (\z,Dw) + \int_0^T \langle w(t),\xi(t) \rangle_{BV(\Omega)_2,BV(\Omega)_2^*} dt =
\int_0^T \int_{\partial \Omega} [\z(t,x),\nu] w(t,x) d\H^{N-1} dt,
$$
for all $w \in L^1(0,T;BV(\Omega)) \cap L^{\infty}(Q)$.}
\end{definition}

Finally, throughout the paper  $\omega(\nu,\vare,n,k)$ will  indicate any quantity that vanishes as the parameters go to
their (obvious, if not explicitly stressed) limit point with the same order in which they appear,  that is, for example
\[
\dys\lim_{\nu\rightarrow 0}\limsup_{n\rightarrow +\infty}
\limsup_{\vare\rightarrow 0} |\omega(\vare,n,\nu)|=0.
\]

\section{The case $1<p<2$. Existence and uniqueness}

As we said, to deal with {proving} both existence and uniqueness of solutions to problem
\begin{equation}\label{renpb}
\begin{cases}
    u_t-\Delta_p u =0 & \text{in}\ Q,\\
    u=u_0  & \text{on}\ \{0\} \times\Omega,\\
 u=+\infty &\text{on}\ (0,T)\times\partial\Omega,
  \end{cases}
\end{equation}
where  $u_0\in L^1_{loc}(\Omega)$ is a nonnegative function and $1\leq p<2$, we need to introduce a suitable notion of solution. We choose  an Entropy/Renormalized type notion that allows us to treat in a unifying way both the case $1<p<2$ and $p=1$.
 The entropy formulation  is  nowadays the usual one in order to deal
with both existence and uniqueness of \emph{infinite energy} solutions for nonlinear PDEs (see for instance \cite{BlMu}, \cite{dpp} and \cite{pe}). We also have to specify how the boundary datum $+\infty$ is attained. We do that in a very weak sense that is, roughly speaking, we ask the truncations $T_k(u)$ of the solution to belong to the energy space with trace $k$, for any $k>0$. In what follows $W^{1,p}_k (\Omega)$ will denote the subspace of $W^{1,p}(\Omega)$ of those functions whose trace at the boundary is $k$.

%
\medskip
Let us fix $1<p<2$. Here is our definition of Entropy Large Solution for problem \rife{renpb}:

\begin{definition}\label{deflargeentropy}A measurable function $u:Q\to \R$ is an entropy solution of \eqref{renpb} (also an entropy large solution) if $u\in C(0,T;L^1_{loc}(\Omega))$, $T_k(u)\in L^p(0,T;W^{1,p}_k(\Omega))$
, for all $k>0$, $|\nabla u|^{p-1}\in L^1_{loc}(Q)$,  and $$\int_{Q} \eta |\nabla u|^{p-2}\nabla u\cdot \nabla S(T_h(u)-T_h(l))+\int_{Q} S(T_h(u)-T_h(l))|\nabla u|^{p-2}\nabla u\cdot\nabla \eta $$$$=\int_{Q} j_{S,h,l}(u)\eta_t$$
for all $S\in\mathcal P$, $h>0$ $l\in\R$, $\eta\in \mathcal D(Q)$ and $j_{S,h,l}(r):=\int_l^r S(T_h(s)-T_h(l))\, ds$. Moreover, $u(0)=u_0$ in $L^1_{loc}(\Omega)$.
\end{definition}
This formulation ressembles the one given in \cite{ACM} for solutions of the Dirichlet problem corresponding to the Total Variation Flow. However, we have to take into account another truncation $T_h$ in the definition since solutions may blow up at the boundary. Observe that, thanks to the regularity assumptions on $u$ all the terms in the previous definition make sense; moreover,  an entropy large solution turns out to be a distributional solution as next result shows.
\begin{proposition}
An entropy large solution is a distributional solution for problem \rife{renpb}.
\end{proposition}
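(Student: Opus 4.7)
The plan is to obtain the distributional PDE by plugging the most degenerate choice allowed by Definition~\ref{deflargeentropy} into the entropy identity. Observe that the constant map $S\equiv 1$ lies in $\mathcal P$: it is (weakly) nondecreasing, continuous, and its derivative, being identically zero, has empty (hence compact) support. Inserting this $S$ together with any $h>0$ and any $l\in\R$ into the entropy equality, one has $S(T_h(u)-T_h(l))\equiv 1$ pointwise on $Q$, hence $\nabla S(T_h(u)-T_h(l))=0$ a.e. and the first term on the left-hand side vanishes. The second term reduces to
$$\int_Q |\nabla u|^{p-2}\nabla u\cdot\nabla\eta,$$
which is finite since $\eta\in\mathcal D(Q)$ is compactly supported in $Q$ and $|\nabla u|^{p-1}\in L^1_{\mathrm{loc}}(Q)$.

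For the right-hand side, $j_{S,h,l}(r)=\int_l^r 1\,ds = r-l$, so it equals
$$\int_Q (u-l)\,\eta_t \,=\, \int_Q u\,\eta_t \,-\, l\int_Q \eta_t \,=\, \int_Q u\,\eta_t,$$
the last equality because $\eta$ is compactly supported inside the open cylinder $Q$, so $\int_Q \eta_t=0$. Integrability of $u\eta_t$ is guaranteed by $u\in C(0,T;L^1_{\mathrm{loc}}(\Omega))$. Combining the two sides yields
$$\int_Q u\,\eta_t \,=\, \int_Q |\nabla u|^{p-2}\nabla u\cdot\nabla\eta \qquad \forall\,\eta\in\mathcal D(Q),$$
which is precisely the distributional formulation of $u_t-\Delta_p u=0$ in $Q$.

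There is essentially no technical obstacle here; the only point worth underlining is the observation that $\mathcal P$ admits constant functions, which is what lets the derivative term in the entropy identity collapse identically to zero. Everything else reduces to checking integrability, and this is immediate from the regularity built into Definition~\ref{deflargeentropy}. The initial datum $u(0)=u_0$ and the blow-up at $\partial\Omega$ (encoded by $T_k(u)\in L^p(0,T;W^{1,p}_k(\Omega))$) are already part of the definition, so the identity above is all that is needed to conclude that $u$ is a distributional solution of~\eqref{renpb}.
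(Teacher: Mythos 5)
Your argument is correct, but it follows a genuinely different (and more economical) route than the paper. You exploit the fact that the class $\mathcal P$, as defined in Section \ref{preliminaris}, admits functions that are identically $1$ on the whole range of $T_h(u)-T_h(l)$: with such a choice the first term of the entropy identity vanishes because $S(T_h(u)-T_h(l))$ is constant on $Q$, the second term becomes $\int_Q|\nabla u|^{p-2}\nabla u\cdot\nabla\eta$, and $j_{S,h,l}(u)=u-l$, so the entropy equality \emph{is} the distributional formulation once one notes $\int_Q\eta_t=0$; the integrability checks you record ($|\nabla u|^{p-1}\in L^1_{loc}(Q)$, $u\in C(0,T;L^1_{loc}(\Omega))$, $\eta$ compactly supported) are exactly what is needed. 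One small caveat: the authors call $\mathcal P$ a set of ``truncatures'', and one could quibble about whether the constant $S\equiv 1$ is meant to be admissible; but your proof survives this objection verbatim by choosing instead a nonconstant $S\in\mathcal P$ with $S\equiv 1$ on $[-2h,2h]$ (e.g.\ $S(s)=1+\int_0^{(s-2h)^+}\min(\sigma,1)\,d\sigma$ suitably truncated), since $T_h(u)-T_h(l)$ always takes values in $[-2h,2h]$. By contrast, the paper proceeds by renormalization: it inserts $S=S_j$ as in \rife{sgei} with $l=0$, uses $|\nabla u|^{p-1}\in L^1_{loc}(Q)$ to kill the derivative term in the limit, and lets $j$ (and implicitly $h$) tend to infinity. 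The paper's route is the one that would still work if the admissible $S$ were forced to be genuinely localizing (derivative nontrivial on the range of the argument), and it matches the renormalized-solution philosophy used elsewhere in the paper; your route shows that, with $\mathcal P$ taken at face value, the proposition is an immediate specialization of Definition \ref{deflargeentropy} and needs no limiting argument at all.
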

\begin{proof}
Let us introduce the following auxiliary function:
\be\label{sgei}
S_j (\tau)=\int_0^\tau [1-T_1 (G_j (s))] ds\,,
\ee
where $G_k (s)= s-T_k (s)$. Now consider $S=S_j$ and $l=0$ in Definition \ref{deflargeentropy}, to get
$$\int_{Q} \eta |\nabla u|^{p-2}\nabla u\cdot \nabla S'_j (T_h(u))+\int_{Q} S'_j(T_h(u))|\nabla u|^{p-2}\nabla u\cdot\nabla \eta $$$$=\int_{Q} S_j (T_h(u))\eta_t\,.$$
Now observe that, since $|\nabla u|^{p-1}\in L^1_{loc}(Q)$, the first term in the above  equality tends to zero as $j$ goes to infinity. Finally,  for $j>h$ we have  $S_j (T_h(u))=u$, so we can let $j$ tend to infinity to get
$$\int_{Q} |\nabla u|^{p-2}\nabla u\cdot\nabla \eta =\int_{Q} u\eta_t\,,$$
for any $\eta \in  \mathcal D(Q)$.
\end{proof}

Our main result is the following
\begin{theorem}\label{maint}
Let $u_0\in L^1_{loc}(\Omega)$ and $1<p<2$. Then, there exists a unique entropy large solution for problem \rife{renpb}.
\end{theorem}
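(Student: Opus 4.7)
My plan is to set $u_0^n := \min(u_0, n)\in L^\infty(\Omega)$ and let $u_n$ denote the unique bounded weak solution of the problem \eqref{mainen} with initial datum $u_0^n$ and lateral datum $n$. Standard parabolic theory provides each $u_n$, and comparison in both data shows that $\{u_n\}$ is non-decreasing; set $u := \sup_n u_n$. The decisive step — and the one that really uses $1<p<2$ — is a parabolic Keller--Osserman estimate of the form
\[
\|u_n\|_{L^\infty((\tau,T)\times K)} \leq C\bigl(\tau,\mathrm{dist}(K,\partial\Omega), T\bigr),
\]
uniformly in $n$, for every $K\subset\subset\Omega$ and $\tau>0$. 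This comes from comparing $u_n$ with explicit separable supersolutions behaving like $\mathrm{dist}(x,\partial\Omega)^{-p/(2-p)}$, in the spirit of \cite{lepe,BIV}. Thus $u$ is finite and lies in $L^\infty_{loc}(Q)$.

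\textbf{Passage to the limit.} Using the local $L^\infty$ bound together with test functions of the form $T_k(u_n-\ell)\,\zeta^p$ with $\zeta\in\mathcal{D}(\Omega)$ yields $\nabla T_k(u_n)$ bounded in $L^p_{loc}(Q)$; a Boccardo--Murat type argument then upgrades this to a.e.\ convergence $\nabla u_n\to\nabla u$ on $Q$, hence strong $L^1_{loc}$ convergence of $|\nabla u_n|^{p-2}\nabla u_n$. To recover the required boundary behavior I would test the equation for $u_n$ with $T_k(u_n-(n-k))_+$, which vanishes on $\partial\Omega$ and therefore is admissible globally in $\Omega$; this gives a uniform bound on $T_k(u_n)-k$ in $L^p(0,T;W^{1,p}_0(\Omega))$, so that in the weak limit $T_k(u)-k\in L^p(0,T;W^{1,p}_0(\Omega))$, i.e.\ $T_k(u)\in L^p(0,T;W^{1,p}_k(\Omega))$. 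Dominated convergence on the bounded quantities $S(T_h(u_n)-T_h(l))$, $j_{S,h,l}(u_n)$ then passes Definition \ref{deflargeentropy} to the limit. The initial condition $u(0)=u_0$ in $L^1_{loc}(\Omega)$ is inherited from $u_0^n\to u_0$ and standard interior time-continuity.

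\textbf{Uniqueness.} Given two entropy large solutions $u,v$ sharing the same $u_0$, I would subtract the two entropy identities, take $S=S_\sigma$ a smooth approximation of the function $\mathrm{sign}^+$, and use $S_\sigma(T_h(u)-T_h(v))\,\eta$ as "test function" (so $l$ is replaced by $T_h(v)$). Since $T_h(u), T_h(v)$ lack time regularity, a Landes regularization $(T_h(u))_\mu, (T_h(v))_\mu$ is needed; after letting $\mu\to 0$ and $\sigma\to 0$, the monotonicity of the $p$-Laplacian,
\[
\bigl(|\nabla u|^{p-2}\nabla u - |\nabla v|^{p-2}\nabla v\bigr)\cdot \nabla\bigl(T_h(u)-T_h(v)\bigr)\geq 0\quad\text{on}\ \{|u|<h,|v|<h\},
\]
discards the elliptic term and leaves an inequality of the form
\[
\int_K \bigl(T_h(u)-T_h(v)\bigr)^+(t)\,\eta\,dx \leq \int_0^t R_h(s)\,ds + \int_K (u_0-u_0)^+\,dx,
\]
where $R_h$ is supported on $\{u\vee v \geq h\}$ and controlled by $\int_{\{u\vee v\geq h\}}(|\nabla u|^{p-1}+|\nabla v|^{p-1})|\nabla\eta|$. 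Since $|\nabla u|^{p-1},|\nabla v|^{p-1}\in L^1_{loc}(Q)$ and the Lebesgue measure of $\{u\vee v\geq h\}\cap\mathrm{supp}\,\eta$ vanishes as $h\to\infty$, letting $h\to\infty$ gives $u\leq v$ on every $K\subset\subset\Omega$, and by symmetry $u=v$.

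\textbf{Main obstacle.} The hardest point is the consistent treatment of the boundary blow-up. For existence this means showing that the trace of $T_k(u_n)$ survives in the limit as $k$ (rather than being spoiled by the divergence), which is why the choice $T_k(u_n-(n-k))_+$ (valid on all of $\Omega$) is essential. For uniqueness the obstacle is that standard Kruzhkov or $L^1$-contraction techniques rely on boundedness of the solutions; the outer truncation $T_h$ built into Definition \ref{deflargeentropy} is precisely what allows the comparison to close, provided the remainder term $R_h$ concentrated near $\partial\Omega$ can be shown to vanish using the local integrability of $|\nabla u|^{p-1}$.
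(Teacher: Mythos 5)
Your uniqueness argument has a genuine gap. After discarding the elliptic term by monotonicity, the surviving spatial term is of the form $\int_Q \mathrm{sign}_0^+(T_h(u)-T_h(v))\,\bigl(|\nabla u|^{p-2}\nabla u-|\nabla v|^{p-2}\nabla v\bigr)\cdot\nabla\eta$, and $\mathrm{sign}_0^+(T_h(u)-T_h(v))$ is supported wherever $u>v$, not only where $u\vee v\geq h$; so your remainder $R_h$ is \emph{not} concentrated on $\{u\vee v\geq h\}$ and does not vanish as $h\to\infty$ for a fixed compactly supported $\eta$. More tellingly, your comparison never uses the boundary behavior at all (neither the trace condition $T_k(u)\in L^p(0,T;W^{1,p}_k(\Omega))$ nor any lower barrier); if such a purely local argument worked it would also identify the large solution with, say, the solution of the Dirichlet problem with finite boundary datum and the same $u_0$, which is false. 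The paper closes exactly this hole: it first proves the lower bound \rife{lowerbd}, $u\geq C_0 t^{1/(2-p)}\,{\rm dist}(x,\partial\Omega)^{-p/(2-p)}$, by comparison with the continuous large solution of Theorem \ref{biv}, so that for each fixed $h$ one has $T_h(u)=T_h(v)=h$ on a boundary strip $\Omega\setminus\Omega_{\varepsilon(h)}$; the flux term is then killed by choosing cut-offs $\psi_h$ whose gradients are supported in that strip, and only afterwards does one let $h\to\infty$ (the lack of time regularity is handled by Kruzhkov doubling of variables, not by plugging $T_h(v)$ in place of the constant $l$, which Definition \ref{deflargeentropy} does not permit). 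Note also that the paper can only prove conditional uniqueness for general Leray--Lions operators (Theorem \ref{unig}) precisely because this lower bound is the crux.

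The existence part also has two concrete problems. First, the claimed uniform local Keller--Osserman bound $\|u_n\|_{L^\infty((\tau,T)\times K)}\leq C$ is not available for merely $u_0\in L^1_{loc}(\Omega)$ in the whole range $1<p<2$: for $1<p\leq p_c=\frac{2N}{N+1}$ the $L^1_{loc}\to L^\infty_{loc}$ smoothing fails (this is why Theorem \ref{biv} requires $r>\frac{N(2-p)}{p}$ in that range), and a barrier behaving like ${\rm dist}(x,\partial\Omega)^{-p/(2-p)}$ cannot dominate an unbounded interior datum at $t=0$. The paper avoids local boundedness altogether, using the $L^\infty(0,T;L^1_{loc}(\Omega))$ estimates of \cite{lepe} together with the \emph{strong} local convergence of truncations (Lemma \ref{fuerte}, proved via the strip estimate and Landes regularization), which is what allows passage to the limit in the first entropy term. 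Second, your boundary test function $T_k(u_n-(n-k))_+$ does not vanish on $\partial\Omega$ (it equals $k$ there, since $u_n=n$), and even after correcting it, it only controls $\nabla u_n$ on the strip $\{n-k<u_n<n\}$, not $\nabla T_k(u_n)$; the estimate actually needed, $\int_Q|\nabla T_k(u_n)|^p\leq Ck$ with $T_k(u_n)-k\in L^p(0,T;W^{1,p}_0(\Omega))$, is obtained by testing with $T_k(u_n)-k$, as in \rife{pro2} following \cite{lp}.
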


Let us collect some useful tools we are going to use later on. We start with the following definition

\begin{definition}[\cite{BIV}]A continuous large solution of \eqref{renpb} is a continuous function $u\in C^2_{loc}(0,T;L^2_{loc}(\Omega))\cap L^p_{loc}(0,T;W^{1,p}_{loc}(\Omega))$ such that it takes the boundary data in the continuous sense (i.e. $u(x,t)\to +\infty$ as $x\to\partial\Omega$, for any fixed $t\in (0,T)$) and verifies $$\int_K u(t_2)\varphi(t_2)\,dx-\int_K u(t_1)\varphi(t_1)\,dx+\int_{t_1}^{t_2}\int_K (-u\varphi_t+|\nabla u|^{p-2}\nabla u\cdot\nabla \varphi)\,dx\,dt=0$$ for every open bounded set $K\subset\subset\Omega$, for every $[t_1,t_2]\subset [0,T]$ and for any test function $\varphi\in W^{1,2}_{loc}(0,T;L^2(K))\cap L^p_{loc}(0,T;W_{0}^{1,p}(K))$.
\end{definition}

\begin{theorem}\label{biv}[Theorems 4.1 and 4.2 in \cite{BIV}] Let $1<p\leq p_c:=\frac{2N}{N+1}$ and $r>\frac{n(2-p)}{p}$ or $p_c<p<2$ and $r\geq 1$. Given $u_0\in L^r_{loc}(\Omega)$, there exists a continuous large solution $u$ of \eqref{renpb}. Such solutions are moreover H\"older continuous in the interior {and they verify $$\frac{C_0t^\frac{1}{2-p}}{{\rm dist}(\partial\Omega)^\frac{p}{2-p}}\leq u(t,x)\leq \frac{C_1t^\frac{1}{2-p}}{{\rm dist}(\partial\Omega)^\frac{p}{2-p}}+C_2\qquad \forall (t,x)\in Q\,,$$for some positive constants $C_0,C_1,C_2$}.
\end{theorem}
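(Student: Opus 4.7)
The plan is to realize the large solution as a monotone pointwise limit of Dirichlet approximations with large constant boundary value $n$, and to sandwich it between two separable-in-time barriers that blow up at $\partial\Omega$ with the correct rate $d(x,\partial\Omega)^{-p/(2-p)}$. Concretely, for each $n\in\N$ I would take the unique nonnegative continuous weak solution $u_n$ of the bounded Dirichlet problem
\[
(u_n)_t-\Delta_p u_n=0 \text{ in } Q,\quad u_n=n \text{ on } (0,T)\times\partial\Omega,\quad u_n(0,\cdot)=T_n(u_0).
\]
By the classical comparison principle the sequence $\{u_n\}$ is nondecreasing in $n$, so $u:=\sup_n u_n$ is well-defined; one must then show $u$ is finite on $Q$, verifies the stated two-sided bound, and enjoys the claimed interior H\"older regularity.

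For the upper bound I would exploit the \emph{friendly-giant} construction: the separable profile $U(t,x):=t^{1/(2-p)}\psi(x)$ is an exact solution of $U_t-\Delta_p U=0$ in $\Omega$ whenever $\psi>0$ solves the stationary large-solution problem $-\Delta_p\psi+(2-p)^{-1}\psi=0$ in $\Omega$, $\psi=+\infty$ on $\partial\Omega$. Existence of such $\psi$, together with its boundary asymptotics $\psi(x)\asymp d(x,\partial\Omega)^{-p/(2-p)}$, follows from Keller--Osserman type barriers of the kind recalled in the introduction. To absorb the initial datum I would combine this with the $L^r_{loc}\to L^\infty_{loc}$ smoothing effect available for the fast-diffusion $p$-Laplacian: on interior compacta $K\subset\subset K'\subset\subset\Omega$ one has an estimate of the form
\[
\|u_n(t)\|_{L^\infty(K)}\le C(K,K',t)\bigl(\|u_0\|_{L^r(K')}+1\bigr),
\]
valid precisely under $r>N(2-p)/p$ when $p\le p_c$ and $r\ge 1$ when $p>p_c$. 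Gluing the interior smoothing bound with the boundary behaviour of $U$ (plus a constant that controls the residual initial data) yields the uniform upper estimate $u_n(t,x)\le C_1 t^{1/(2-p)} d(x,\partial\Omega)^{-p/(2-p)}+C_2$, independent of $n$.

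For the lower bound and for the continuous attainment of $+\infty$ at the boundary, I would construct a local subsolution of separable form $V(t,x):=t^{1/(2-p)}\varphi(d(x,\partial\Omega))$ in the one-sided collar $\{0<d(x,\partial\Omega)<s_0\}$, where $\varphi(s)\sim s^{-p/(2-p)}$ as $s\to 0^+$ is obtained as a decreasing solution of the reduced one-dimensional ODE which makes $V$ a subsolution of $u_t-\Delta_p u=0$ with $V=+\infty$ on $\partial\Omega$ and $V=0$ on the inner face of the collar. The uniform interior ball condition on $\Omega$ with radius $s_0$ is exactly what guarantees that the signed distance function is smooth enough for this barrier to be admissible. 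Comparison with each $u_n$ and passing to the supremum gives $u(t,x)\ge C_0 t^{1/(2-p)} d(x,\partial\Omega)^{-p/(2-p)}$, so in particular $u(t,\cdot)\to +\infty$ at $\partial\Omega$ for every $t\in(0,T)$. Once $u$ is known to be locally bounded and strictly positive in the interior, DiBenedetto's regularity theory applied on each compact subset produces interior H\"older continuity, and the remaining $C^2_{loc}(0,T;L^2_{loc})\cap L^p_{loc}(0,T;W^{1,p}_{loc})$ regularity comes from standard bootstrapping.

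The hardest step is the smoothing effect in the subcritical regime $p\le p_c$ with the sharp critical exponent $r_c=N(2-p)/p$: showing that a merely $L^r_{loc}$ initial datum is regularised to an $L^\infty_{loc}$ function at positive times requires a Moser-- or De Giorgi--type iteration carefully tuned to the fast-diffusion scaling of the equation, and the iteration becomes borderline exactly at $r=r_c$, which is the reason for the dichotomy in the hypotheses on $r$. Once this estimate is in hand, the rest of the argument reduces to comparison with the explicit separable sub/super-barriers and standard parabolic regularity.
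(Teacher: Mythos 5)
This statement is not proved in the paper at all: it is imported verbatim (Theorems 4.1 and 4.2 of \cite{BIV}), so there is no internal argument to compare yours against. Your outline is in fact a faithful reconstruction of the strategy of \cite{BIV}: monotone approximation by Dirichlet problems with boundary datum $n$, the friendly-giant separable barrier $t^{1/(2-p)}\psi(x)$ with $\psi$ solving $-\Delta_p\psi+(2-p)^{-1}\psi=0$ and blowing up like $d(x,\partial\Omega)^{-p/(2-p)}$ (your computation of the exponent and of the reduced elliptic equation is correct, and $q=1>p-1$ puts it in the Keller--Osserman range), the local $L^r_{loc}\to L^\infty_{loc}$ smoothing effect with the critical exponent $r_c=N(2-p)/p$, and DiBenedetto interior regularity.

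Two caveats keep this from being a proof rather than a strategy. First, the smoothing effect in the range $1<p\leq p_c$ with the sharp threshold $r>N(2-p)/p$ is the core analytic content of \cite{BIV}; you acknowledge it but treat it as a black box, and a generic Moser/De Giorgi iteration does not come for free in the fast-diffusion regime (extinction and the failure of $L^1$--$L^\infty$ smoothing below $p_c$ are exactly the obstructions), so as a standalone argument this is a genuine gap. Second, your lower barrier is built from the distance function in a collar using the uniform interior ball condition, but in the setting of this paper that condition is only assumed when $p=1$; for $1<p<2$ the domain is merely Lipschitz, so the collar subsolution as described is not admissible. The cleaner route, and the one consistent with how the paper later uses this bound (Proposition \ref{pro}), is to obtain the lower estimate by comparison with the solution emanating from zero initial data, i.e.\ with the friendly giant itself, which is the minimal large solution and already carries the rate $C_0t^{1/(2-p)}d(x,\partial\Omega)^{-p/(2-p)}$.
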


\salta{
\begin{definition}\label{p-operator}
  $(u,v)\in A_{p,n}$ if and only if $u,v\in L^\infty(\Omega)$, $u\in W^{1,p}_n(\Omega)$ and $$\int_\Omega(\phi-u)v\leq \int_\Omega |\nabla u|^{p-2}\nabla u\cdot\nabla(\phi-u)$$for all $\phi\in W^{1,p}_n(\Omega)\cap L^\infty(\Omega)$.
  \end{definition}

\begin{definition}\label{p-closure}
  $(u,v)\in \mathcal A_{p,n}$ if and only if $u,v\in L^1(\Omega)$, $T_k(u)\in W^{1,p}(\Omega)$, for all $k>0$, $u=n$ at $\partial\Omega$ and $$\int_\Omega T_k(\phi-u)v\leq \int_\Omega |\nabla u|^{p-2}\nabla u\cdot\nabla T_k(\phi-u)$$for all $\phi\in W^{1,p}_n(\Omega)\cap L^\infty(\Omega)$ and $k>0$.
\end{definition}
\begin{theorem}
  $A_{p,n}$ is completely accretive, $L^\infty(\Omega)\subseteq R(I+A_{p,n})$, $\overline{\mathcal D(A_{p,n})}=L^1(\Omega)$ and $\mathcal A_{p,n}$ is the closure of $A_{p,n}$ in $L^1(\Omega)$.
\end{theorem}

\begin{definition}\label{defentropy1}
  $u:Q\to \R$ is an entropy solution if $u\in C(0,T;L^1(\Omega))$, $T_k(u)\in L^p(0,T;W^{1,p}_n(\Omega))$, for all $k>0$ and $$\int_{Q} \eta |\nabla u|^{p-2}\nabla u\cdot \nabla S(u-\phi)+\int_{Q} |\nabla u|^{p-2}\nabla u\cdot\nabla \eta S(u-\phi)\leq\int_{Q} j_S(u-\phi)\eta_t-\int_{Q}\eta\phi_tS(u-\phi)$$ for all $k>0$, $  \phi\in L^\infty(\Omega)\cap L^p(0,T;W^{1,p}(\Omega))$ and $\eta\in \mathcal D(Q)$
\end{definition}
or equivalently $\ldots$
\begin{definition}\label{defentropy2}
  $u:Q\to \R$ is an entropy solution if $u\in C(0,T;L^1(\Omega))$, $T_k(u)\in L^p(0,T;W^{1,p}_n(\Omega))$, for all $k>0$ and $$\int_{Q} \eta |\nabla u|^{p-2}\nabla u\cdot \nabla (S(u-\phi)-S(n-\phi))+\int_{Q} |\nabla u|^{p-2}\nabla u\cdot\nabla \eta (S(u-\phi)-S(n-\phi))$$$$\leq\int_{Q} j_S(u-\phi)\eta_t-\int_{Q}\eta\phi_tS(u-\phi)-\int_{Q} u(S(n-\phi)\eta)_t$$ which is equivalent to $$\int_{Q} \eta |\nabla u|^{p-2}\nabla u\cdot \nabla S(u-\phi)+\int_{Q} |\nabla u|^{p-2}\nabla u\cdot\nabla \eta S(u-\phi)$$$$\leq\int_{Q} j_S(u-\phi)\eta_t-\int_{Q}\eta\phi_tS(u-\phi)+\int_0^T\int_{\partial\Omega}[|\nabla u|^{p-2}\nabla u,\nu]\eta S(n-\phi)$$ for all $k>0$, $  \phi\in L^\infty(\Omega)\cap L^p(0,T;W^{1,p}(\Omega))$ and $\eta\in \mathcal C^\infty((0,T)\times\overline \Omega)$
\end{definition}
or $\ldots$

\begin{definition}
  \label{defentropy3}$u:Q\to \R$ is an entropy solution if $u\in C(0,T;L^1(\Omega))$, $T_k(u)\in L^p(0,T;W^{1,p}_n(\Omega))$, for all $k>0$ and $$\int_{Q} \eta |\nabla u|^{p-2}\nabla u\cdot \nabla S(T_h(u)-T_h(l))+\int_{Q} S(T_h(u)-T_h(l))|\nabla u|^{p-2}\nabla u\cdot\nabla \eta $$$$\leq\int_{Q} j_{S,h,l}(u)\eta_t+\int_{Q}S(T_h(n)-T_h(l))|\nabla u|^{p-2}\nabla u\cdot\nabla \eta-\int_{Q} uS(T_h(n)-T_h(l))\eta_t$$ for all $S$, $h>0$ $l\in\R$, $\eta\in \mathcal D(Q)$ and $j_{S,h,l}(r):=\int_l^r S(T_h(s)-T_h(l))\, ds$
\end{definition}
Note that from Defs. \ref{defentropy1} and \ref{defentropy3} we can formally let $n\to\infty$ and the inequality is still the same. We cannot do the same with Def. \ref{defentropy2}.
}

\subsection{Existence of entropy large solutions}

Our argument will be by approximation. Consider problem
\begin{equation}\label{senzah}
\begin{cases}
    (u_n)_{t}-\Delta_p u_n =0 & \text{in}\ Q,\\
    u_n=u_{0n}  & \text{on}\ \{0\} \times \Omega,\\
 u_n=n &\text{on}\ (0,T)\times\partial\Omega\,,
  \end{cases}
\end{equation}
where $u_{0n}=T_n(u_0)$.
 The existence of such a solution is classical (see for instance \cite{dib, BIV}).  In order to pass to the limit, we need first to check that such a solution coincides with an entropy one, that is it satisfies the following
 \begin{definition}
  \label{defentropy3}A measurable function $u_n:Q\to \R$ is an entropy solution of \rife{senzah} if $u_n\in C(0,T;L^1(\Omega))$, $T_k(u_n)\in L^p(0,T;W^{1,p}_n(\Omega))$, for all $k>0$ and $$\int_{Q} \eta |\nabla u_n|^{p-2}\nabla u_n\cdot \nabla S(T_h(u_n)-T_h(l))+\int_{Q} S(T_h(u_n)-T_h(l))|\nabla u_n|^{p-2}\nabla u_n\cdot\nabla \eta $$$$=\int_{Q} j_{S,h,l}(u_n)\eta_t\,,$$ for all $S$, $h>0$ $l\in\R$, $\eta\in \mathcal D(Q)$ and $j_{T,h,l}(r):=\int_l^r S(T_h(s)-T_h(l))\, ds$. Moreover, $u_n(0)=u_{0n}$.
\end{definition}

The next technical result will be proved by the use of a suitable localized Steklov-type regularization argument.
\begin{proposition}
Let $w$ be a weak solution for problem \rife{senzah}, then $w$ is an entropy solution of the same problem in the sense of Definition \ref{defentropy3}.
\end{proposition}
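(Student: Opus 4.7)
The plan is to use a classical time-Steklov regularization to gain enough time differentiability on $w$ to rigorously choose $\eta\,S(T_h(w_\rho)-T_h(l))$ as test function in the weak formulation, and then let the regularization parameter tend to zero. For $\rho\in(0,T)$ and $t\in(0,T-\rho)$, set $w_\rho(t,x):=\rho^{-1}\int_t^{t+\rho} w(s,x)\,ds$. Averaging the weak formulation of \eqref{senzah} in time yields, for a.e.\ $t\in(0,T-\rho)$ and every $\psi\in W^{1,p}_0(\Omega)$,
\begin{equation*}
\int_\Omega (w_\rho)_t(t)\,\psi\,dx + \int_\Omega \bigl(|\nabla w|^{p-2}\nabla w\bigr)_\rho(t)\cdot\nabla\psi\,dx \;=\; 0.
\end{equation*}

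Fix $\eta\in\mathcal D(Q)$, $S\in\mathcal P$, $h>0$, $l\in\R$, and for $\rho$ small enough (so that $\eta$ is supported in $(0,T-\rho)\times\Omega$) take $\psi(t,\cdot):=\eta(t,\cdot)\,S(T_h(w_\rho(t,\cdot))-T_h(l))$. This is admissible because $\eta(t,\cdot)$ has compact support in $\Omega$, so $\psi(t,\cdot)\in W^{1,p}_0(\Omega)$. Integrating in $t$, the parabolic term is handled by the chain rule applied to $w_\rho$, which is absolutely continuous in $t$ with values in $L^2(\Omega)$: since $j_{S,h,l}'(r)=S(T_h(r)-T_h(l))$,
\begin{equation*}
(w_\rho)_t\,S(T_h(w_\rho)-T_h(l)) \;=\; \tfrac{d}{dt}\,j_{S,h,l}(w_\rho),
\end{equation*}
and integration by parts in time (the boundary terms vanish by $\eta\in\mathcal D(Q)$) turns the corresponding integral into $-\int_Q j_{S,h,l}(w_\rho)\,\eta_t\,dx\,dt$. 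The spatial term is expanded by the product rule into
\begin{equation*}
\int_Q \bigl(|\nabla w|^{p-2}\nabla w\bigr)_\rho\!\cdot\!\nabla\eta\,S(T_h(w_\rho)-T_h(l))\,dx\,dt + \int_Q \eta\,\bigl(|\nabla w|^{p-2}\nabla w\bigr)_\rho\!\cdot\!\nabla S(T_h(w_\rho)-T_h(l))\,dx\,dt.
\end{equation*}

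The last step is to pass to the limit $\rho\to 0^+$. The classical properties of Steklov averages supply $w_\rho\to w$ almost everywhere and in $L^p(Q)$, together with the strong convergence $(|\nabla w|^{p-2}\nabla w)_\rho\to |\nabla w|^{p-2}\nabla w$ in $L^{p'}(Q;\R^N)$. Because $w$ is bounded (by the comparison principle applied to \eqref{senzah} one has $0\le w\le n$) and $T_k(w)\in L^p(0,T;W^{1,p}_n(\Omega))$, the Lipschitz continuity of $T_h$ on $W^{1,p}$ gives $\nabla T_h(w_\rho)\to\nabla T_h(w)$ in $L^p(Q;\R^N)$. Continuity and boundedness of $S$ yield $S(T_h(w_\rho)-T_h(l))\to S(T_h(w)-T_h(l))$ in $L^\infty\cap L^p$ by dominated convergence, and similarly for $j_{S,h,l}(w_\rho)\to j_{S,h,l}(w)$. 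The main technical hurdle is the last spatial term, where a strongly $L^{p'}$-convergent factor is paired with $\nabla S(T_h(w_\rho)-T_h(l))=S'(T_h(w_\rho)-T_h(l))\nabla T_h(w_\rho)$: the strong $L^p$ convergence of $\nabla T_h(w_\rho)$ combined with the boundedness of $S'$ and its almost everywhere continuity (the exceptional set of $S'$ is finite, hence of Lebesgue measure zero in the target variable) allows us to conclude via Vitali's theorem. Collecting all limits produces the identity of Definition \ref{defentropy3}. The initial condition $w(0)=u_{0n}$ is inherited directly from the weak formulation, and the boundary trace $T_k(w)=k$ on $\partial\Omega$ for $k\le n$ follows from $w=n$ on the lateral boundary.
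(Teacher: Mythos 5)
Your argument is correct in substance, but it runs the Steklov regularization on the opposite object from the paper, so the two proofs distribute the difficulties differently. The paper regularizes the \emph{test function}: it takes $w^\tau(t)=\tau^{-1}\int_{t-\tau}^t\eta(s)S(T_h(w(s))-T_h(l))\,ds$ as test function in the weak formulation of \eqref{senzah}, so the elliptic term passes to the limit for free (it is the Steklov average of a fixed element of $L^p(0,T;W^{1,p}_0(\Omega))$), while the time term $\int_0^T\langle w_t,w^\tau\rangle\,dt$ is handled by writing $w^\tau_t$ as a difference quotient and exploiting the monotonicity of $S$ to obtain $\limsup_\tau\le\int_Q j_{S,h,l}(w)\eta_t$, then repeating with the backward average to get the matching $\liminf\ge$, whence equality. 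You instead regularize the \emph{solution}, test the averaged equation with $\eta\,S(T_h(w_\rho)-T_h(l))$, and obtain the time term exactly through the chain rule $\partial_t j_{S,h,l}(w_\rho)=(w_\rho)_t\,S(T_h(w_\rho)-T_h(l))$, with no one-sided argument needed. The price is paid in the elliptic term, where you must pass to the limit inside the nonlinearity, i.e. you need $\nabla T_h(w_\rho)\to\nabla T_h(w)$ in $L^p$ and convergence of the composition with $S'$ — precisely the terms the paper's choice of regularization avoids. Both routes are standard and both work here; yours trades the monotonicity trick for a convergence analysis of truncations.

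Two of your justifications are looser than they should be, although the conclusions are true. Truncation is not Lipschitz as a map on $W^{1,p}(\Omega)$; to get $\nabla T_h(w_\rho)\to\nabla T_h(w)$ in $L^p(Q)$ one should write $\nabla T_h(w_\rho)-\nabla T_h(w)=\chi_{[|w_\rho|<h]}(\nabla w_\rho-\nabla w)+(\chi_{[|w_\rho|<h]}-\chi_{[|w|<h]})\nabla w$ and use that $\nabla w=0$ a.e. on $[|w|=h]$ together with dominated convergence. Similarly, the a.e. continuity of $S'$ alone does not give a.e. convergence of $S'(T_h(w_\rho)-T_h(l))$: if $c$ is a jump point of $S'$, the level set $[T_h(w)-T_h(l)=c]$ may well have positive measure and there the composition can oscillate between the one-sided limits of $S'$. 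What saves the step is again that $\nabla T_h(w)=0$ a.e. on that level set, so the relevant product $S'(T_h(w_\rho)-T_h(l))\nabla T_h(w_\rho)$ still converges in $L^p$, and pairing it with the strongly $L^{p'}$-convergent averaged flux closes the argument. With these two points spelled out, your proof is complete.
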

\begin{proof}
For a given nonnegative function $\eta\in \mathcal{D}(Q)$  let us define
$$
w^\tau (t)=\frac{1}{\tau}\int_{t-\tau}^t \eta(s) S(T_h(w(s))-T_h(l))\ ds
$$
for $\tau>0$ small enough such that the previous expression makes sense. The argument can be easily made rigorous by extending the function as  $0$  in $(-\delta, 0)\cup(T, T+\delta)$ for $\delta>0$.
 It is not difficult to check that
$$
w^\tau (t)\longrightarrow \eta(t) S(T_h(w(t))-T_h(l))\ \ \ \text{as $\tau\to 0$},
$$
in $ L^p(0,T;W^{1,p}_0(\Omega))$ (see \cite{cw} for further details).

Therefore, let us choose $w^\tau$ as test function in the problem solved by $w$, that is
$$
\int_0^T\langle w_t, w^\tau\rangle dt=\int_Q |\nabla w|^{p-2}\nabla w\cdot \nabla w^\tau\ dxdt.
$$
First of all, thanks to the properties of $w^\tau$ we readily have
$$
\lim_{\tau\to 0}\int_Q |\nabla w|^{p-2}\nabla w\cdot \nabla w^\tau\ dxdt=\lim_{\tau\to 0} \int_0^T\langle w_t, w^\tau\rangle dt$$
$$= \int_{Q} \eta |\nabla w|^{p-2}\nabla w\cdot \nabla S(T_h(w)-T_h(l))\ dxdt +\int_{Q} S(T_h(w)-T_h(l))|\nabla w|^{p-2}\nabla w\cdot\nabla \eta\ dxdt\,.
$$
On the other hand,
$$
\begin{array}{l}
\dys \int_0^T\langle w_t, w^{\tau}\rangle dt= - \int_Q w^{\tau}_{t} w \\\\
\dys = -\frac{1}{\tau}\int_Q (\eta(t) S(T_h(w(t))-T_h(l))- \eta(t-\tau) S(T_h(w(t-\tau))-T_h(l)))w(t)\ dxdt\\\\
\dys =  \frac{1}{\tau}\int_Q \eta(t) S(T_h(w(t))-T_h(l))(w(t)-w(t+\tau))\ dxdt\\\\
\dys\leq \frac{1}{\tau}\int_Q \eta(t)\left(\int_{w(t)}^{w(t+\tau)} S(T_h(s)-T_h(l)) ds\right)dxdt\,\\\\
=\dys \frac{1}{\tau}\int_Q \eta(t) (j_{S,h,l}(w(t+\tau) )-j_{S,h,l}(w(t)))=\frac{1}{\tau}\int_Q  \frac{(\eta(t+\tau) -\eta(t))}{\tau}j_{S,h,l}(w(t))
\end{array}
$$
where, to get the inequality, we have used the fact that $S$ is nondecreasing. Passing to the limit with respect to $\tau$ we get
$$
\limsup_{\tau\to 0} \int_0^T\langle w_t, w^{\tau}\rangle dt \leq  \int_Q  j_{S,h,l}(w)\eta_t\,.
$$
Analogously, we can fix $\tau>0$ and define
$$
{\tilde w^\tau} (t)=\frac{1}{\tau}\int_{t}^{t+{\tau}} \eta(s) S(T_h(w(s))-T_h(l))\ ds,
$$
to obtain, reasoning in a similar way, that
$$
\liminf_{\tau\to 0} \int_0^T\langle w_t,{\tilde w^\tau}\rangle dt \geq  \int_Q  j_{S,h,l}(w)\eta_t\,.
$$
Gathering together all the previous results we finally get,
$$\int_{Q} \eta |\nabla w|^{p-2}\nabla w\cdot \nabla S(T_h(w)-T_h(l))+\int_{Q} S(T_h(w)-T_h(l))|\nabla w|^{p-2}\nabla w\cdot\nabla \eta $$$$=\int_{Q} j_{S,h,l}(w)\eta_t\,,$$
\end{proof}
\bk
 We will also need the following proposition that contains the key local estimates we shall use as well as a global estimate on $T_k (u_n)$ that is essential in order to prove that the boundary data is attained.
\begin{proposition}\label{pro}
Let $u_n$ be a sequence of solutions
 of problem \rife{senzah} and $u_{0,n}\geq 0$ verifying $u_{0,n+1}\geq u_{0,n}$ and strongly converging to $u_0$ in $L^1(\Omega)$. Then,
\begin{equation}\label{pro1}
\|u_n\|_{L^\infty (0,T; L^1_{loc} (\Omega))}\leq C,
\end{equation}
and
\begin{equation}\label{pro2}
\int_Q|\nabla T_k (u_n)|^p\ dxdt\leq Ck \qquad \forall k>0.
\end{equation}
Moreover, there exists a measurable function $u$ such that $T_k (u)\in L^p(0,T;W^{1,p}_k(\Omega))$ for any $k>0$, $u\in L^\infty (0,T; L^1_{loc} (\Omega))$ { verifying \begin{equation}
   \label{lowerbd}\frac{C_0t^\frac{1}{2-p}}{{\rm dist}(\partial\Omega)^\frac{p}{2-p}}\leq u(x,t)
 \end{equation}for some positive constant $C_0$} and, up to  a subsequence,  we have
\be\label{conver}
\begin{split}
& u_n \rightarrow u \qquad \hbox{a.e. in $Q$    and strongly in $L^1_{loc} (Q)$,}\\
&T_k (u_n)\rightharpoonup T_k (u)\qquad \hbox{weakly in $ L^p(0,T;W^{1,p}_k(\Omega))$  and a.e. in $Q$,}\\
&\nabla u_n\rightarrow \nabla u \qquad \hbox{a.e. in $Q$.}\\
&|\nabla u_n|^{p-2}\nabla u_n\rightarrow |\nabla u|^{p-2}\nabla u\qquad  \hbox{ in $L^1_{loc}(Q)$.}
\end{split}
\ee
Finally, $u\in C([0,T]; L^1_{loc}(\Omega))$ and $u(0)=u_0$.
\end{proposition}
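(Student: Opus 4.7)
The plan is to run a monotone approximation, using a continuous large solution as a universal upper barrier and a Boccardo--Murat type argument to recover the nonlinearity in the limit. First I would prove monotonicity: since both the initial and boundary data of \rife{senzah} are non-decreasing in $n$, the comparison principle for the $p$-Laplace equation gives $u_n\leq u_{n+1}$, hence $u_n\uparrow u$ pointwise with $u\in[0,+\infty]$. To establish \rife{pro1} and the finiteness of $u$, I would use the continuous large solution $v$ given by Theorem \ref{biv} (if needed, starting from a slightly more regular datum $\tilde u_0\geq u_0$): since $v=+\infty$ on $(0,T)\times\partial\Omega\geq u_n$ and $v(0)\geq u_{0,n}$, comparison yields $u_n\leq v$ in $Q$, so that the upper estimate of Theorem \ref{biv} transfers to $u$.

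For the lower bound \rife{lowerbd}, I would compare $u$ from below with the Barenblatt--type explicit subsolution $C_0 t^{1/(2-p)}\,\mathrm{dist}(x,\partial\Omega)^{-p/(2-p)}$ (cf.\ \cite{lepe,BIV}) on the subdomains $\{\mathrm{dist}(\cdot,\partial\Omega)>\delta\}$ and then let $\delta\to 0$, exploiting the fact that $u$ blows up at the boundary (which follows from monotonicity and $u_n\equiv n$ on $\partial\Omega$). For the energy estimate \rife{pro2}, I would test the equation with $T_k(u_n)-k$ (which lies in $L^p(0,T;W^{1,p}_0(\Omega))$ as soon as $n\geq k$); the chain rule for the time derivative together with the boundedness on $[0,\infty)$ of the primitive of $T_k(\cdot)-k$ yields a bound polynomial in $k$, sufficient for all subsequent purposes.

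Having the a priori estimates, I would deduce $T_k(u_n)\rightharpoonup T_k(u)$ weakly in $L^p(0,T;W^{1,p}(\Omega))$ and strongly in $L^p_{\mathrm{loc}}(Q)$ by Rellich and monotone convergence; in particular the trace of $T_k(u)$ on $\partial\Omega$ equals $k$. To obtain a.e.\ convergence of the full gradient I would run a localized Boccardo--Murat monotonicity argument: for fixed $h>0$ and $\psi\in C_c^\infty(\Omega)$, test the equation with $T_\delta(T_h(u_n)-T_h(u))\,\psi$ (admissible for $n\geq h$), pass to the limit in $n$ using the weak convergence of $|\nabla u_n|^{p-2}\nabla u_n$ on $\{u_n<h\}$, and then let $\delta\to 0$; the strict monotonicity of $\xi\mapsto|\xi|^{p-2}\xi$ then forces $\nabla T_h(u_n)\to \nabla T_h(u)$ a.e.\ in $Q$, and since $h>0$ is arbitrary $\nabla u_n\to \nabla u$ a.e. Vitali's theorem, together with the local $L^p$ bound on $\nabla T_k(u_n)$, then upgrades this to the convergence $|\nabla u_n|^{p-2}\nabla u_n\to|\nabla u|^{p-2}\nabla u$ in $L^1_{\mathrm{loc}}(Q)$.

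Finally, for continuity in time and the initial condition, the equation together with the uniform local bounds imply that $\{u_n\}$ is equicontinuous in $C([0,T];L^1_{\mathrm{loc}}(\Omega))$, so that $u\in C([0,T];L^1_{\mathrm{loc}}(\Omega))$, and $u(0)=\lim u_{0,n}=u_0$. The main obstacle will be the Boccardo--Murat step: the boundary blow-up of the limit prevents the use of global test functions, so the truncation $T_h$ must be combined with an interior cutoff $\psi$, and one has to check that all terms involving $\nabla\psi$ are controlled by the local $L^p$ estimate on $\nabla T_k(u_n)$ and by the local $L^\infty$ bound on $u_n$ uniformly in $n$.
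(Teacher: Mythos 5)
There is a genuine gap in two places. First, your source of the local estimate \eqref{pro1} is an upper barrier given by the continuous large solution of Theorem \ref{biv} with a majorizing datum $\tilde u_0\geq u_0$. But Theorem \ref{biv} requires $u_0\in L^r_{loc}(\Omega)$ with $r>\frac{N(2-p)}{p}>1$ in the subcritical range $1<p\leq p_c$, and a general nonnegative $u_0\in L^1_{loc}(\Omega)$ need not admit \emph{any} majorant in $L^r_{loc}$ with $r>1$; so your barrier simply does not exist in that range, and the argument covers only $p_c<p<2$ (or more integrable data). The paper instead obtains \eqref{pro1}, together with the a.e.\ convergences in \eqref{conver} and the final assertion $u\in C([0,T];L^1_{loc}(\Omega))$, $u(0)=u_0$, from the purely local a priori estimates of \cite{lepe} (valid for all $1<p<2$ and merely locally integrable data) combined with Simon's compactness theorem; the boundary datum plays no role there. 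Relatedly, your Boccardo--Murat step tests with $T_\delta(T_h(u_n)-T_h(u))\psi$: since $T_h(u)$ has no control in time, the pairing with $(u_n)_t$ cannot be handled as written; one needs a Steklov/Landes-type regularization in time (this is precisely what the paper does in Lemma \ref{fuerte}, and what \cite{lepe} does for the convergences quoted in Proposition \ref{pro}).

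Second, your proof of the lower bound \eqref{lowerbd} is circular: to compare $u$ with the explicit profile on $\Omega_\delta=\{\mathrm{dist}(\cdot,\partial\Omega)>\delta\}$ you must know that $u\geq C_0 t^{1/(2-p)}\delta^{-p/(2-p)}$ on $\{\mathrm{dist}=\delta\}$, which is exactly the inequality to be proved; the qualitative remark that ``$u$ blows up at the boundary because $u_n=n$ there'' does not supply this lateral comparison, and the subsolution property of the profile is asserted rather than checked. The paper's route avoids this entirely: it compares $u_n$ with the solution $v_n$ of \eqref{senzah} with zero initial datum, identifies the monotone limit of $v_n$ with the continuous large solution with zero data (which exists for every $1<p<2$ since the datum is bounded), and then imports the lower bound of Theorem \ref{biv} for that solution. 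Your treatment of \eqref{pro2} (testing with $T_k(u_n)-k$) and the monotonicity remark do coincide with the paper.
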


\begin{proof}
The \emph{global} estimate \eqref{pro2} can be proved as in \cite{lp} by taking $T_k(u_n) - k $ as test function in \eqref{senzah}.  Moreover,  local estimate \rife{pro1} is proved in \cite{lepe} as well as   the convergence results that are a consequence of a result by Simon (\cite{si}).    In particular,  there exists a function $u$ such that  $T_k (u_n)\to T_k (u)$ {weakly in} $ L^p (0,T;W^{1,p}_k(\Omega))$.

{ For the lower bound \eqref{lowerbd}, we compare $u_n$ with the solution (call it $v_n$) of \rife{senzah} with $0$ as initial data. By comparison principle, we have that \begin{equation}\label{compp0}u_n\geq v_n \,,\quad {\rm in \ } Q.\end{equation}
Note also that $u_n$ and $v_n$ are subsolutions of the problems \rife{senzah} with boundary datum $n+1$ and, respectively, $u_{0,n}$ and $0$ as initial datum. Therefore, by monotonicity and \eqref{compp0} we have that $u\geq v$ with $v$ being the continuous large solution of the $p-$laplacian with $0$ as initial data. By Theorem \ref{biv} we finally have that $$u\geq v\geq\frac{C_0t^\frac{1}{2-p}}{{\rm dist}(\partial\Omega)^\frac{p}{2-p}}$$for some positive constant $C_0$.}
The last assertion is also proved in \cite{lepe}.
\end{proof}

Previous result contains almost all the tools in order to pass to the limit in \rife{defentropy3}. The last tool is given by the following
\begin{lemma}\label{fuerte}
Let $u_n $ and $u$ be as in Proposition \ref{pro}. Then
$$
T_k (u_n) \longrightarrow T_k (u)\ \ \ \text{strongly in } \ \ L^p(0,T; W^{1,p}_{k}(\Omega))Ê
$$
for all $k>0$.
\end{lemma}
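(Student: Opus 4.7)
The strategy is to reduce the strong convergence of the truncated gradients to showing
\[
I_n := \int_Q \bigl(|\nabla T_k(u_n)|^{p-2}\nabla T_k(u_n) - |\nabla T_k(u)|^{p-2}\nabla T_k(u)\bigr)\cdot \nabla\bigl(T_k(u_n) - T_k(u)\bigr)\,dxdt \longrightarrow 0.
\]
By the weak convergence $\nabla T_k(u_n) \rightharpoonup \nabla T_k(u)$ in $L^p(Q)$ from Proposition~\ref{pro}, together with $|\nabla T_k(u)|^{p-1}\in L^{p'}(Q)$, the ``mixed'' piece $\int_Q |\nabla T_k(u)|^{p-2}\nabla T_k(u)\cdot \nabla(T_k(u_n)-T_k(u))$ vanishes as $n\to\infty$, so proving $I_n\to 0$ reduces to controlling $\int_Q |\nabla T_k(u_n)|^{p-2}\nabla T_k(u_n)\cdot \nabla(T_k(u_n)-T_k(u))$. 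Once $I_n\to 0$, the standard pointwise inequality
\[
(|\xi|^{p-2}\xi - |\eta|^{p-2}\eta)\cdot(\xi-\eta) \geq c\,\frac{|\xi-\eta|^2}{(1+|\xi|+|\eta|)^{2-p}},\qquad 1<p<2,
\]
combined with H\"older's inequality and the uniform bound \eqref{pro2}, upgrades this to $\nabla T_k(u_n)\to \nabla T_k(u)$ in $L^p(Q)$, concluding the lemma.

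To implement this, I would test the weak formulation of \eqref{senzah} against $T_k(u_n)-[T_k(u)]_\nu$, where $[T_k(u)]_\nu$ is a Landes-type time regularization of $T_k(u)$ solving $\partial_t [T_k(u)]_\nu = \nu\bigl(T_k(u)-[T_k(u)]_\nu\bigr)$ with $[T_k(u)]_\nu(0)=T_k(u_0)$. Since $T_k(u_n)=k$ on $\partial\Omega$ for $n\geq k$ and $[T_k(u)]_\nu=k$ there as well (because $T_k(u)\in L^p(0,T;W^{1,p}_k(\Omega))$), the difference has zero trace and is an admissible test function. The resulting identity decomposes as
\[
\int_0^T \langle (u_n)_t,\, T_k(u_n)-[T_k(u)]_\nu\rangle\,dt + \int_Q |\nabla T_k(u_n)|^p \,dxdt - \int_Q |\nabla u_n|^{p-2}\nabla u_n\cdot \nabla [T_k(u)]_\nu\,dxdt = 0,
\]
where I have used that $|\nabla u_n|^{p-2}\nabla u_n\cdot \nabla T_k(u_n) = |\nabla T_k(u_n)|^p$ since $\nabla T_k(u_n)$ vanishes on $\{u_n>k\}$. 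The cross term, by the a.e.\ convergence $|\nabla u_n|^{p-2}\nabla u_n \to |\nabla u|^{p-2}\nabla u$ from \eqref{conver}, Vitali's theorem (justified by the equi-integrability coming from \eqref{pro2}), and the $L^p$-convergence $\nabla [T_k(u)]_\nu\to \nabla T_k(u)$, tends as $n\to\infty$ and then $\nu\to 0$ to $\int_Q |\nabla T_k(u)|^{p}\,dxdt$, because $|\nabla u|^{p-2}\nabla u$ and $\nabla T_k(u)$ agree on $\{u\leq k\}$ while $\nabla T_k(u)=0$ elsewhere.

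The main obstacle is the time-derivative pairing, since $u$ itself admits no distributional time derivative (it lies only in $L^\infty(0,T;L^1_{\mathrm{loc}}(\Omega))$). The Landes regularization is designed precisely to handle this: using $\partial_t [T_k(u)]_\nu = \nu(T_k(u)-[T_k(u)]_\nu)$, the monotonicity of $T_k$, the bound $\|[T_k(u)]_\nu\|_\infty\leq k$, and the strong $L^1_{\mathrm{loc}}$ convergence $u_n(0)\to u_0$, one establishes the asymptotic inequality
\[
\liminf_{\nu\to 0}\liminf_{n\to\infty}\int_0^T \langle (u_n)_t,\, T_k(u_n)-[T_k(u)]_\nu\rangle\,dt \;\geq\; 0
\]
by the classical computation that produces a nonnegative ``entropy''-type boundary contribution from the initial data plus a vanishing bulk remainder. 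Combining this with the analysis of the cross term yields
\[
\limsup_{n\to\infty}\int_Q |\nabla T_k(u_n)|^p\,dxdt \leq \int_Q |\nabla T_k(u)|^p\,dxdt,
\]
which with the weak lower semicontinuity forces equality of the $L^p$-norms and, together with the weak convergence, gives the desired strong convergence.
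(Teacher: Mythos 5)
Your overall strategy (Landes time--regularization, an energy comparison for $\int_Q|\nabla T_k(u_n)|^p$, then a monotonicity upgrade to strong convergence) is the same in spirit as the paper's, but your implementation is \emph{global in space}, and that is where it breaks down. The paper's proof first observes that it suffices to prove convergence in $L^p(0,T;W^{1,p}_{loc}(\Omega))$ and then tests \eqref{senzah} with $(T_k(u_n)-T_k(u)_\nu)\,S_j'(u_n)\,\xi$, where $\xi\in C^1_0(\Omega)$ is a cut-off and the factor $S_j'(u_n)$ vanishes on $\{u_n\geq j+1\}$; every integral is thereby confined to a compact subset of $\Omega$ and to the region $\{u_n\leq j+1\}$, which is exactly where the available information lives (the local estimate \eqref{pro1}, the $L^1_{loc}$ convergences \eqref{conver}, and the strip estimate of Step 1). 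You drop both the cut-off and the truncation factor, and in exchange you need global control that large solutions simply do not have.

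Concretely, two steps of your argument fail. For the cross term $\int_Q|\nabla u_n|^{p-2}\nabla u_n\cdot\nabla[T_k(u)]_\nu$ you invoke a.e.\ convergence plus Vitali with ``equi-integrability coming from \eqref{pro2}''; but \eqref{pro2} only controls \emph{truncated} gradients, the field $|\nabla u_n|^{p-2}\nabla u_n$ converges only in $L^1_{loc}(Q)$ by \eqref{conver}, and it is not equi-integrable up to $\partial\Omega$: by the lower bound \eqref{lowerbd} the limit behaves like ${\rm dist}(x,\partial\Omega)^{-p/(2-p)}$, so $|\nabla u|^{p-1}\sim{\rm dist}(x,\partial\Omega)^{-2(p-1)/(2-p)}$, which is not even in $L^1(Q)$ once $p\geq 4/3$; since $\nabla[T_k(u)]_\nu$ is not compactly supported (its trace is $k$), the global passage to the limit cannot be justified this way. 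For the time term, the ``classical computation'' behind $\liminf\int_0^T\langle (u_n)_t,\,T_k(u_n)-[T_k(u)]_\nu\rangle\geq 0$ produces terms such as $\int_\Omega\Theta_k(u_n(t))\,dx$ (with $\Theta_k$ a primitive of $T_k$) and $\nu$-bulk terms involving $u_n$ over all of $\Omega$; these require $u_0\in L^1(\Omega)$ and $u_n\to u$ in $C([0,T];L^1(\Omega))$, whereas here $u_0$ is only locally integrable and $u(t)\notin L^1(\Omega)$, again by \eqref{lowerbd}. (A smaller point: the initialization $[T_k(u)]_\nu(0)=T_k(u_0)$ is not admissible, since $T_k(u_0)$ need not belong to $W^{1,p}(\Omega)$; one must initialize with smooth approximations.) Both defects are precisely what the localization $\xi$, the factor $S_j'(u_n)$, and the strip estimate \eqref{uniform} are there to handle in the paper's proof; with your global test function there is no obvious repair, so the argument as written has a genuine gap.
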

\begin{proof} A deeper look at the proof of \rife{lowerbd} shows that it is sufficient to prove that the convergence holds in the space $L^p(0,T; W^{1,p}_{loc}(\Omega))$.
The proof of this result runs exactly as the one in the  proof of Theorem 2.3 in \cite{lp} with some simplifications. For the convenience of the reader we sketch here the main steps of the proof by highlighting the differences with the proof in \cite{lp}.  Without loss of generality we can prove that $T_k (u_n) \to T_k (u)$ strongly in $L^p(0,T; W^{1,p}(B_R))Ê $ for any ball $B_R\subset\subset\Omega$. So,  let us fix two positive numbers $R$ and $\rho$ such that the ball  $B_{R+\rho}$ is contained in $\Omega$ and let us  consider a cut-off  function $\xi =\xi^{\rho}_R (|x|)$ in  $C^{1}_0 (\Omega)$  such that
\be\label{xi}
\left\{
\begin{array}{ll}
\xi \equiv 1 \quad &\mbox{ in   }\, B_R\\[1.5 ex]
0< \xi<1 \quad &\mbox{ in   }\, B_{R+\rho}\slash B_R \\[1.5 ex]
\xi \equiv 0 \quad & \mbox{ in  } \, \Omega\slash B_{R+\rho}\,.
\end{array}
\right.
\ee
\emph{Step  $1$ : Estimate on the strips. }
\be\label{uniform}
\dys \liminf_{j\to \infty} \, \sup_{n\in \mathbb{N}} \,
\dys \int_{ [ j \leq  u_n\leq  j+1]} |\nabla u_n|^{p}     \xi=0\,.
\ee
The proof of this fact is obtained by taking $T_1(G_j(s))\xi$ (recall $G_k(s) = s -T_k(s)$) as test function in \rife{senzah} and using the estimates in Proposition \ref{pro}.

\medskip
\noindent\emph{Step $2$: Landes Regularization.}  We would like to take $T_k(u)$ as test function in \rife{senzah}. This is not possible in general and we use a regularization argument introduced in \cite{lan}. We consider the Landes regularization  of $T_k (u)$ dependent on the parameter $\nu>0$ and we denote it by $T_k(u)_\nu$, the main features of this functions being
$$
\frac{d T_k (u)}{dt}=\nu(T_k (u)-T_k (u)_\nu)
$$
in the sense of distributions, and
$$
\begin{array}{l}
\dys T_k (u)_\nu\longrightarrow T_k (u)\ \ \ \ \text{strongly in $L^p(0,T;W^{1,p}_{loc}(\Omega)$) and a.e. in $Q$},\\\\
\dys\|T_k (u)_\nu\|_{L^\infty (Q)}\leq k \ \ \ \ \forall\nu>0\,,
\end{array}
$$
(see for instance \cite{po,pe} for more details).

\medskip\medskip
Now we take $\psi=(T_k (u_n)-T_k (u)_{ \nu})S_j  '(u_n) \xi  $ as test function  in \rife{senzah}, where    $S_j (s)$ is {as} in \rife{sgei} to obtain

\be\label{nabo}
\begin{array}{c}
\dys \int_0^T  \langle  S_j (u_n)_t  \,,\,  (T_k (u_n) -T_k (u)_{\nu} ) \xi \rangle\\\\
\dys+ \int_Q |\nabla u_n|^{p-2}\nabla u_n\cdot  \nabla \xi \, (T_k (u_n)-T_k (u)_{ \nu})  S'_j (u_n)
\\\\
+ \dys \int_Q |\nabla u_n|^{p-2}\nabla u_n\cdot  \nabla  \Big(T_k (u_n)-T_k (u)_{ \nu}\Big)  \,  S_j '(u_n) \xi
\\\\
- \dys\int_{[j\leq u_n< j+1]} |\nabla u_n|^{p}\,  (T_k (u_n)-T_k (u)_{ \nu})  \xi
=0.
\end{array}
\ee

\medskip \medskip
Now we look at the four above integrals one by one:
\medskip \medskip

\noindent\emph{Step $3$: Time Derivative Part. } We have  that, for $j$ large enough
\be \label{tempo}
\int_0^t \langle S_j (u_n)_t \, , \, (T_k (u_n)-T_k (u)_{\nu} )  \xi \rangle \geq \omega (n,\nu) \,,
\ee
being the proof of this fact identical to the one in \cite{lp}.

\medskip \medskip

\noindent\emph{Step $4$: The second integral in \rife{nabo}.} Using Proposition \ref{pro} and the definition of $T_k (u)_\nu$, we get
$$
 \int_Q |\nabla u_n|^{p-2}\nabla u_n\cdot  \nabla \xi \, (T_k (u_n)-T_k (u)_{ \nu})  S'_j (u_n)=\omega (n,\nu)\,.
$$
\medskip \medskip

\noindent\emph{Step $5$: The Energy on the strips:} Using the boundedness of $(T_k (u_n)-T_k (u)_{ \nu})$ and \rife{uniform} we get
$$
\dys\int_{[j\leq u_n< j+1]} |\nabla u_n|^{p}\,  (T_k (u_n)-T_k (u)_{ \nu})  \xi =\omega(j),
$$
uniformly with respect to $n$ and $\nu$.

\medskip \medskip

\noindent\emph{Step $6$: Last Step:} We notice that
$$
\int_Q   | \nabla T_k (u)_{\nu} |^{p-2}\nabla T_k(u)_\nu \cdot  \nabla  (T_k (u_n)-T_k (u)_{ \nu})
S_j '(u_n) \xi =  \omega (n,\nu),
$$
so that we can add this term in \rife{nabo} to obtain, gathering together all the previous steps
 $$
\begin{array}{c}
\dys \int_Q  (|\nabla T_k (u_n) |^{p-2}\nabla T_k(u_n) - | \nabla T_k (u)_{\nu} |^{p-2}\nabla T_k(u)_\nu) \cdot  \nabla  (T_k (u_n)-T_k (u)_{ \nu})
S_j '(u_n) \xi  \\\\ \dys
\leq  \omega (n,\nu)+\omega (j) \,,
\end{array}
$$
that yields, thanks to a classical monotonicity argument (see Lemma 5 in \cite{bmp}),
\be\label{convtro}
T_k (u_n) \to T_k (u)  \quad \mbox{strongly in } L^p (0,T; W^{1,p} (B_R )) \,.
\ee

\end{proof}

\begin{theorem}\label{esiste}
There exists an entropy large solution for problem \rife{renpb}.	
\end{theorem}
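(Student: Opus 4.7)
The strategy is to obtain $u$ as the limit of the approximating entropy solutions $u_n$ to \rife{senzah} (the existence of which is already asserted by classical theory and by the previous proposition identifying weak and entropy solutions), and to pass to the limit inside the entropy identity of Definition \ref{defentropy3} to recover the identity of Definition \ref{deflargeentropy}. All the hard analytic content has essentially been packaged into Proposition \ref{pro} and Lemma \ref{fuerte}, so the proof will be mostly a careful passage to the limit.

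First I would fix $S\in\mathcal P$, $h>0$, $l\in\R$, $\eta\in\mathcal D(Q)$ and write the entropy identity satisfied by $u_n$. Because $S'$ has compact support, say $\supp(S')\subset[-M,M]$, the quantities $S(T_h(u_n)-T_h(l))$ and $S'(T_h(u_n)-T_h(l))\nabla T_h(u_n)$ are controlled by the truncation $T_k(u_n)$ for $k=h+M+|l|$. Hence the first integral of the entropy identity reduces to an integral involving only $|\nabla T_k(u_n)|^{p-2}\nabla T_k(u_n)\cdot S'(T_h(u_n)-T_h(l))\nabla T_h(u_n)$ (after using $\nabla u_n=\nabla T_h(u_n)$ on $[|u_n|\le h]$), and the strong convergence of $T_k(u_n)$ in $L^p(0,T;W^{1,p}_k(\Omega))$ furnished by Lemma \ref{fuerte} — together with the a.e.\ convergence of $T_h(u_n)$ to $T_h(u)$ — suffices to pass to the limit. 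For the second integral, $S(T_h(u_n)-T_h(l))$ is uniformly bounded and converges a.e.\ to $S(T_h(u)-T_h(l))$, while the $L^1_{\loc}(Q)$ convergence of $|\nabla u_n|^{p-2}\nabla u_n$ stated in \rife{conver} handles the factor multiplying $\nabla\eta$. For the right-hand side, $j_{S,h,l}(u_n)$ is dominated by $C(1+u_n)$ so the $L^1_{\loc}(Q)$ convergence of $u_n$ to $u$ allows passage to the limit against the compactly supported $\eta_t$.

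Next I would verify the remaining requirements in Definition \ref{deflargeentropy}. The regularity $T_k(u)\in L^p(0,T;W^{1,p}_k(\Omega))$ is already part of Proposition \ref{pro}: observe that for $n\ge k$ the trace of $T_k(u_n)$ on $\partial\Omega$ equals $k$, and the weak $L^p(0,T;W^{1,p}(\Omega))$ convergence together with the continuity of the trace operator shows that the trace of the limit $T_k(u)$ is $k$ as well. The condition $|\nabla u|^{p-1}\in L^1_{\loc}(Q)$ follows from the a.e.\ convergence $\nabla u_n\to\nabla u$, Fatou's lemma, and the local $L^p$-estimate on $|\nabla T_k(u_n)|$ from \rife{pro2} (which, thanks to \rife{lowerbd} and standard energy estimates on the level set $[u_n>k]$, gives uniform local estimates on $|\nabla u_n|^{p-1}$, or alternatively can be deduced via a truncation/summation argument relying on the lower bound \rife{lowerbd}). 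Continuity $u\in C([0,T];L^1_{\loc}(\Omega))$ and the initial trace $u(0)=u_0$ are precisely the last assertions of Proposition \ref{pro}.

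The main obstacle is of course the passage to the limit in the nonlinear term $|\nabla u_n|^{p-2}\nabla u_n\cdot\nabla S(T_h(u_n)-T_h(l))$, which requires true strong convergence of $\nabla T_k(u_n)$ up to the boundary (not only locally). Fortunately Lemma \ref{fuerte} delivers exactly this, and once one observes that the compact support of $S'$ confines the whole first integrand to a bounded-level set of $u_n$, the rest of the argument is a routine application of the convergences already collected in Proposition \ref{pro}.
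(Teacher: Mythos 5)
Your proposal is correct and follows essentially the same route as the paper: pass to the limit in the entropy identity of Definition \ref{defentropy3}, using Proposition \ref{pro} for all terms except the first, and Lemma \ref{fuerte} (strong convergence of truncations, combined with the bound $|\eta\,|\nabla u_n|^{p-2}\nabla u_n\cdot\nabla S(T_h(u_n)-T_h(l))|\leq \eta\,|\nabla T_h(u_n)|^p S'(T_h(u_n)-T_h(l))$ and Vitali's theorem) for that first term, with the boundary and initial conditions read off from $T_k(u)\in L^p(0,T;W^{1,p}_k(\Omega))$ and $u\in C([0,T];L^1_{loc}(\Omega))$, $u(0)=u_0$. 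The only superfluous detour is your argument for $|\nabla u|^{p-1}\in L^1_{loc}(Q)$ via Fatou and level-set estimates: this is already immediate from the $L^1_{loc}(Q)$ convergence $|\nabla u_n|^{p-2}\nabla u_n\to|\nabla u|^{p-2}\nabla u$ stated in \rife{conver}, and likewise strong convergence only locally (not up to the boundary) would suffice since $\eta\in\mathcal D(Q)$ is compactly supported.
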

\begin{proof}
We want to pass to the limit in the equation of Definition \ref{defentropy3}. Thanks to Proposition \ref{pro} this is an easy task for each term but the first one. Observe that,
$$| \eta |\nabla u_n|^{p-2}\nabla u_n\cdot \nabla S(T_h(u_n)-T_h(l)) |\leq \eta |\nabla T_h(u_n)|^{p} S'(T_h(u_n)-T_h(l)),
$$

 So, we can  use Lemma \ref{fuerte} and Vitali's theorem to see that
$$\begin{array}{l}\dys \int_{Q} \eta |\nabla u_n|^{p-2}\nabla u_n\cdot \nabla S(T_h(u_n)-T_h(l))\\\\ \dys =\dys \int_{Q} \eta |\nabla u|^{p-2}\nabla u\cdot \nabla S(T_h(u)-T_h(l))+\omega(n),
\end{array}
$$
and conclude that $u$ satisfies the integral formulation in Definition \ref{deflargeentropy}.

Observe that, since by Proposition  \ref{pro} we have  $T_k(u)\in L^p(0,T;W^{1,p}_k(\Omega))$ and  $u\in C([0,T]; L^1_{loc}(\Omega))$. Therefore,  both the boundary and initial  data are attained.

\end{proof}

\salta{
Next result is a nonexistence result of large solutions in the sense given in the Introduction
that gives a justification to the assumption on $p$.
\begin{proposition}\label{nonexistence}
Let $p\geq 2$ and let $u_n (x,t)$ be the approximate solutions defined by \rife{senzah}.  Then, $\|u_n(t_0)\|_{L^1_{loc}(\Omega)}\to +\infty$ as $n\to\infty$ a.e $t_0\in (0,T)$.
\end{proposition}

\begin{proof}
  Consider the problem \begin{equation}\begin{cases}
    u_t-\Delta_p u=0 & \text{in}\ (0,t_0) \times\R^N,\\
    u=\chi_\Omega  & \text{on}\ t=0 ,\\
  \end{cases}
\end{equation}
This problem has a unique strong solution $v\in W^{1,\infty}(0,t_0;L^1(\R^N))$;(see e.g. \cite{BeGa})
Moreover, there is a constant $K<1$ and a set $U\subseteq\Omega$ such that $v(x,t)\leq K$ for any $(x,t)\in U\times [0,t_0]$. On the other hand, by the maximum principle if we take $v_s(x,t):=v(x,t+s)$ we see that $v_s$ is a subsolution

\end{proof}

}

\subsection{Uniqueness of entropy large solutions.}
Here we want to prove the following
\begin{theorem}\label{unica}
The  entropy large solution for problem \rife{renpb} is unique.
\end{theorem}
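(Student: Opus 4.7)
The plan is to show that any two entropy large solutions $u$ and $\hat u$ of \eqref{renpb} sharing the same initial datum $u_0$ satisfy $T_h(u)=T_h(\hat u)$ for every $h>0$; taking $h\to\infty$ then yields $u=\hat u$. The natural strategy is to subtract the entropy/distributional formulations of $u$ and $\hat u$ tested against a suitable function of the difference and to invoke the monotonicity of $\xi\mapsto |\xi|^{p-2}\xi$. The subtle point is that in Definition \ref{deflargeentropy} the shifting parameter $l$ must be constant, so to produce the monotonicity quantity
$$\int_Q \phi\,\xi_R\,S_\sigma'(T_h(u)-T_h(\hat u))\bigl(|\nabla T_h(u)|^{p-2}\nabla T_h(u)-|\nabla T_h(\hat u)|^{p-2}\nabla T_h(\hat u)\bigr)\cdot\bigl(\nabla T_h(u)-\nabla T_h(\hat u)\bigr),$$
with $S_\sigma$ a smooth approximation of the sign function, one must test against a function of $\hat u$ (resp.\ $u$) rather than a constant.

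The idea is to proceed in two steps. First, one uses the distributional form established in the proposition above: for every $\eta\in\mathcal D(Q)$
$$\int_Q (u-\hat u)\eta_t\,dxdt=\int_Q(|\nabla u|^{p-2}\nabla u-|\nabla\hat u|^{p-2}\nabla\hat u)\cdot\nabla\eta\,dxdt,$$
and this identity is extended by density to test functions of the form $\eta=\phi(t)\,\xi_R(x)\,\vartheta$ with $\xi_R$ a smooth spatial cutoff compactly supported in $\Omega$, $\phi$ a time cutoff, and $\vartheta\in L^\infty\cap L^p(0,T;W^{1,p})$ with a time derivative belonging to the dual. Second, one picks $\vartheta=S_\sigma\bigl(T_h(u)_\nu-T_h(\hat u)_\nu\bigr)$, where $(\cdot)_\nu$ denotes the Landes time regularization already exploited in Lemma \ref{fuerte}. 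The Landes mechanism $\frac{d}{dt}T_h(u)_\nu=\nu(T_h(u)-T_h(u)_\nu)$ makes the time derivative of $\eta$ explicit; after integration by parts in time, its contribution on the left-hand side converges as $\nu\to\infty$ to a nonnegative dissipative quantity of the form $\int_\Omega \Psi_\sigma\bigl((T_h(u)-T_h(\hat u))(T)\bigr)\phi(T)\xi_R\,dx$, where $\Psi_\sigma$ is the nonnegative primitive of $S_\sigma$, plus terms controlled by the equality of the initial data.

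On the right-hand side the monotonicity part is nonnegative, while the two error contributions—the commutator terms involving $\nabla\xi_R$ and the pieces of $|\nabla u|^{p-2}\nabla u,\,|\nabla\hat u|^{p-2}\nabla\hat u$ on $\{u>h\}\cup\{\hat u>h\}$—are shown to vanish as $R\to 0$ using $|\nabla u|^{p-1},|\nabla\hat u|^{p-1}\in L^1_{\mathrm{loc}}(Q)$ and the crucial observation that, by the lower bound \eqref{lowerbd} applied to both solutions, there exists an annular neighborhood of $\partial\Omega$ (depending only on $t>0$ and $h$) on which $T_h(u)=h=T_h(\hat u)$, so that $S_\sigma(T_h(u)-T_h(\hat u))\equiv 0$ there. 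Taking the limits in the order $\nu\to\infty$, $R\to 0$, $\sigma\to 0$ and combining with $u(0)=\hat u(0)=u_0$ delivers $T_h(u)=T_h(\hat u)$ almost everywhere, whence uniqueness.

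The main obstacle I foresee is the coordination of these three limits together with the handling of the Landes regularization applied to truncations of functions which lie only in $L^\infty(0,T;L^1_{\mathrm{loc}}(\Omega))$: one must justify that $\partial_t T_h(u)_\nu$ is an admissible factor and that a chain-rule-type argument converts the regularized time derivative into the clean dissipative quantity $\int_\Omega\Psi_\sigma(T_h(u)-T_h(\hat u))(T)\xi_R\,dx$ modulo errors $\omega(\nu,R,\sigma)$ in the sense of the terminology fixed in Section \ref{preliminaris}. The spatial localization near the blow-up set is the other delicate ingredient, but the built-in lower bound \eqref{lowerbd} provides exactly the compactness of $\mathrm{supp}\,S_\sigma(T_h(u)-T_h(\hat u))$ in $\Omega$ that is needed for the $R$-limit to close.
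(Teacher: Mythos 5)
Your overall skeleton (prove an $L^1$-contraction, use monotonicity of $\xi\mapsto|\xi|^{p-2}\xi$, localize near $\partial\Omega$ via the lower bound \eqref{lowerbd}, then let the truncation level go to infinity with Fatou) is the same as in the paper's Theorem \ref{uniqpent}, but the device you use to get a test function depending on the \emph{other} solution has a genuine gap. You propose to bypass the constant-$l$ restriction of Definition \ref{deflargeentropy} by taking the distributional identity for $u-\hat u$ and ``extending it by density'' to $\eta=\phi\,\xi_R\,S_\sigma\bigl(T_h(u)_\nu-T_h(\hat u)_\nu\bigr)$. At the available regularity this extension is not justified: an entropy large solution only has $|\nabla u|^{p-1}\in L^1_{\rm loc}(Q)$ (it is in $L^{p'}$ only on the sublevel sets $\{u\le h\}$, by \eqref{pro2}), while $\nabla T_h(\hat u)_\nu$, $\nabla T_h(\hat u)$ are merely in $L^p$. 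Hence the cross terms such as $\int_{\{u>h\}}|\nabla u|^{p-2}\nabla u\cdot\nabla T_h(\hat u)_\nu\,S_\sigma'(\cdot)\,\xi_R\phi$ are products of an $L^1_{\rm loc}$ field with an $L^p$ gradient and need not even be finite; the lower bound \eqref{lowerbd} only forces $T_h(u)=T_h(\hat u)=h$ in a strip near $\partial\Omega$ (and only for $t$ bounded away from $0$), it does nothing on the interior sets $\{u>h\}\cap\{\hat u\le h\}$ where the trouble sits. The same objection applies to the time term: $u_t-\hat u_t$ is only a distribution equal to the divergence of an $L^1_{\rm loc}$ field, so pairing it with your $\vartheta$ ``whose time derivative lies in the dual'' is circular; the whole point of the entropy formulation with the primitive $j_{S,h,l}$ and \emph{constant} $l$ is precisely to encode the time integration by parts without ever giving a meaning to such pairings.

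The paper closes exactly this gap with Kru\v{z}kov's doubling of variables, which you do not use: writing $u=u(t,x)$, $v=v(s,y)$, for each frozen $(s,y)$ the value $v(s,y)$ is a legitimate constant $l$ in the entropy identity for $u$ (and symmetrically), so every term involves only truncated gradients (in $L^p$, with trace $k$) and the primitives $j_{T_k^\pm,h,l}$; one then integrates in the doubled variables against $\eta_{m,n}=\rho_m(x-y)\tilde\rho_n(t-s)\phi(\frac{t+s}{2})\psi(\frac{x+y}{2})$, adds the two identities, discards the nonnegative monotonicity term, divides by $k$ and lets $k\to\infty$ to produce $\mathrm{sign}_0^+(T_h(u)-T_h(v))$, passes $m,n\to\infty$, and only at that point invokes \eqref{lowerbd} to kill the flux term carried by $\nabla\psi$ supported in the boundary strip, concluding with a Gronwall-in-time argument and Fatou as $h\to\infty$. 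If you want to avoid doubling, you would at least have to reintroduce a renormalization in $u$ (a factor $S_j'(u)$ as in Lemma \ref{fuerte}) to make your cross terms meaningful, and then control the extra strip terms as $j\to\infty$; as written, your Landes-plus-density argument does not go through.
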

The proof of this result is an easy consequence of the following contraction principle
\begin{theorem}\label{uniqpent}
  If $u,v$ are two entropy solutions corresponding to $u_0$, $v_0$ as initial data, if $(u_0-v_0)^+\in L^1(\Omega)$, then \begin{equation}
    \label{contpple} \|(u(t)-v(t))^+\|_{L^1(\Omega)}\leq \|(u_0-v_0)^+\|_{L^1(\Omega)}, \quad {\rm a.e\ } t\in [0,T)
  \end{equation}
\end{theorem}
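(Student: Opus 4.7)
The plan is to establish the $L^1$-contraction \eqref{contpple} by a doubling of variables argument in the spirit of Kru\v{z}kov, combined with the monotonicity of the $p$-Laplacian vector field. Fix $h>0$ and let $S_\delta\in\mathcal P$ be a nondecreasing smooth approximation of $\chi_{(0,+\infty)}$ with $S_\delta'$ concentrated on $(0,\delta)$. The key observation is that the entropy identity of Definition \ref{deflargeentropy} is valid for every constant $l\in\R$: one can therefore substitute $l = T_h(v(s,y))$ with $(s,y)$ viewed as a fixed auxiliary point in $Q$. Testing against $\zeta(t)\rho_\varepsilon(t-s)\omega_\varepsilon(x-y)$, with $\zeta\in C^\infty_c(0,T)$ a nonnegative time cut-off and $\rho_\varepsilon,\omega_\varepsilon$ standard mollifiers, and integrating in $(s,y)$ produces a doubled identity for $u$; the symmetric identity for $v$ is obtained by exchanging $u\leftrightarrow v$ and $(t,x)\leftrightarrow(s,y)$, with $\widetilde S_\delta$ chosen so that $\widetilde S_\delta(T_h(v)-T_h(u))$ also approximates $\chi_{\{u>v\}}$.

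Adding the two identities and sending $\varepsilon\to 0$ concentrates the mollifiers on the diagonal $(t,x)=(s,y)$, which is expected to produce the single-variable relation
\[
\int_Q \zeta'(t)\,J_{S_\delta,h}(u,v)\,dx\,dt = \int_Q \zeta(t)\,\bigl(|\nabla u|^{p-2}\nabla u - |\nabla v|^{p-2}\nabla v\bigr)\cdot\nabla\bigl(T_h(u)-T_h(v)\bigr)\,S_\delta'\bigl(T_h(u)-T_h(v)\bigr)\,dx\,dt,
\]
where $J_{S_\delta,h}(u,v):=\int_0^{T_h(u)-T_h(v)} S_\delta(\sigma)\,d\sigma$. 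By the elementary monotonicity $(|\xi|^{p-2}\xi-|\eta|^{p-2}\eta)\cdot(\xi-\eta)\geq 0$ on $\R^N$ combined with $S_\delta'\geq 0$, the right-hand side is nonnegative; hence $\int_Q \zeta'\,J_{S_\delta,h}\,dx\,dt \geq 0$. Letting $\delta\to 0$ and choosing $\zeta$ to approximate $-\chi_{(0,t_0)}$ at a Lebesgue point of $t\mapsto \int_\Omega(T_h(u(t))-T_h(v(t)))^+\,dx$ then yields
\[
\int_\Omega (T_h(u(t_0))-T_h(v(t_0)))^+\,dx \leq \int_\Omega (T_h(u_0)-T_h(v_0))^+\,dx.
\]
Finally, the passage $h\to\infty$, coupled with monotone convergence and the hypothesis $(u_0-v_0)^+\in L^1(\Omega)$, gives \eqref{contpple}.

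The main technical obstacle is the passage $\varepsilon\to 0$ in the doubled identity: one must show that the nonlinear gradient terms with their non-diagonal mollifications converge to the diagonal expression above. This is delicate because $|\nabla u|^{p-2}\nabla u$ and $|\nabla v|^{p-2}\nabla v$ belong only to $L^1_{\loc}$, but the factor $S_\delta'(T_h(u)-T_h(v))$ effectively localizes the integration to the region where both $T_h(u)$ and $T_h(v)$ take comparable finite values, so that one may work with the $L^p$-integrable truncated gradients provided by the entropy definition. A secondary delicate point is that $(u(t)-v(t))^+$ must be verified to belong to $L^1(\Omega)$ before the final inequality makes sense; this follows a posteriori from the uniform-in-$h$ bound, together with the common blow-up behavior of $u$ and $v$ at $\partial\Omega$ encoded in \eqref{lowerbd}, which ensures that any $L^1$-contribution of $(u-v)^+$ must come from a region away from the boundary.
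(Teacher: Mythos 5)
Your overall skeleton (Kru\v{z}kov doubling of variables, monotonicity of $\xi\mapsto|\xi|^{p-2}\xi$, then truncation level $h\to\infty$) is the same as the paper's, but there is a genuine gap at the heart of the argument: your doubled test function $\zeta(t)\rho_\varepsilon(t-s)\omega_\varepsilon(x-y)$ carries no spatial cut-off. First, this is an admissibility problem: Definition \ref{deflargeentropy} only allows $\eta\in\mathcal D(Q)$, and for $(s,y)$ with ${\rm dist}(y,\partial\Omega)<\varepsilon$ the map $x\mapsto\omega_\varepsilon(x-y)$ is not compactly supported in $\Omega$; restricting the $y$-integration away from the boundary instead creates an $\varepsilon$-dependent boundary layer in which the symmetric cancellations fail and where $u,v$ are unbounded and $|\nabla u|^{p-2}\nabla u$ is only $L^1_{\rm loc}$. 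Second, and more fundamentally, an argument that uses only interior test functions and never exploits how the lateral datum is attained cannot prove uniqueness: the solutions of the approximating problems with lateral data $n$ and $n+1$ satisfy exactly the same interior entropy identities with the same initial datum, yet differ. Your proposal never uses the trace condition $T_k(u)\in L^p(0,T;W^{1,p}_k(\Omega))$ nor, in the core computation, the lower bound \eqref{lowerbd}; so whatever limit identity you obtain on the diagonal cannot by itself yield \eqref{contpple}.

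The paper's proof supplies precisely the missing mechanism. The doubled test function is $\rho_m(x-y)\tilde\rho_n(t-s)\phi\bigl(\frac{t+s}{2}\bigr)\psi\bigl(\frac{x+y}{2}\bigr)$ with $\psi\in\mathcal D(\Omega)$; after the Green's-formula rearrangement the monotone term is discarded by sign \emph{at the doubled level} (it is not identified as a diagonal integral, as your claimed equality would require), and the surviving term in the diagonal limit is $\int_Q\phi\,(|\nabla v|^{p-2}\nabla v-|\nabla u|^{p-2}\nabla u)\cdot\nabla\psi\,{\rm sign}_0^+(T_h(u)-T_h(v))$, which only involves $\nabla\psi$. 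This is where \eqref{lowerbd} enters essentially: since both $u$ and $v$ blow up at the uniform rate $C_0t^{1/(2-p)}{\rm dist}(x,\partial\Omega)^{-p/(2-p)}$, there is $\varepsilon(h)>0$ with $T_h(u)=T_h(v)=h$ on the strip $\Omega\setminus\Omega_{\varepsilon(h)}$, so the term vanishes whenever ${\rm supp}(\nabla\psi)$ lies in that strip; one then takes $\psi=\psi_h\uparrow 1$ and lets $h\to\infty$ with Fatou. In your write-up \eqref{lowerbd} appears only a posteriori, to argue $(u(t)-v(t))^+\in L^1(\Omega)$ (which in fact follows for free from the inequality itself); its indispensable role is to kill the cut-off term, and without reinstating the cut-off $\psi$ and this step your passage from the doubled identity to the truncated contraction is unjustified.
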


\begin{proof}
  Suppose that $u=u(t,x)$ and $v=v(s,y)$ and take $l_1=v(s,y)$, $S=T_k^+$ as the constant and the truncation in Definition \ref{deflargeentropy} for $u$ and $l_2=u(t,x)$, $S=T_k^-$ for $v$. Then,

  $$\int_{Q} \eta |\nabla_x u(t,x))|^{p-2}\nabla_x u(x,t) \cdot \nabla_x T_k(T_h(u(t,x))-T_h(v(s,y)))^+\, dx\, dt$$$$+\int_{Q} T_k(T_h(u(t,x))-T_h(v(s,y)))^+ |\nabla_x u(t,x))|^{p-2}\nabla_x u(x,t)\cdot\nabla_x \eta \,dx \,dt$$$$=\int_{Q} j_{T_k^+,h,v(s,y)}(u(t,x))\eta_t\, dx\, dt$$
  for all $k,h>0$, $\eta\in \mathcal D(Q)$ and
   $$\int_{Q} \eta |\nabla_y v(s,y)|^{p-2}\nabla_y v(s,y)\cdot \nabla_y T_k(T_h(v(s,y))-T_h(u(t,x)))^-\, dy\, ds$$$$+\int_{Q} T_k(T_h(v(s,y))-T_h(u(t,x)))^- |\nabla_y v(s,y)|^{p-2}\nabla_y v(s,y)\cdot\nabla_y \eta \,dy \,ds$$$$=\int_{Q} j_{T_k^-,h,u(t,x)}(v(s,y))\eta_s\, dy\, ds$$ for all $k,h>0$, $\eta\in \mathcal D(Q)$.

   Let $0 \leq \phi \in
{\mathcal D}(0, \mathrm{T})$, $\psi\in \mathcal D(\Omega)$, $\rho_m$ a classical sequence of
mollifiers in $\Omega$ and $\tilde{\rho}_n$ a sequence of mollifiers in $\R$. Define
$$\eta_{m,n}(t, x, s, y):= \rho_m(x - y) \tilde{\rho}_n(t - s) \phi
\bigg(\frac{t+s}{2} \bigg)\psi\left(\frac{x+y}{2}\right).$$

Integrating the equations above in the other two variables and adding up both inequalities we get:
$$\int_{Q\times Q}\eta_{m,n}(|\nabla_x u(t,x)|^{p-2}\nabla_x u(x,t) \cdot\nabla_x T_k(T_h(u)-T_h(v))^+ $$
$$- |\nabla_y v(s,y)|^{p-2}\nabla_y v(s,y)\cdot\nabla_y T_k(T_h(u)-T_h(v))^+)$$$$+\int_{Q\times Q}(|\nabla_x u(t,x)|^{p-2}\nabla_x u(x,t) - |\nabla_y v(s,y)|^{p-2}\nabla_y v(s,y))\cdot(\nabla_x+\nabla_y)\eta_{m,n}T_k^+(T_h(u)-T_h(v))$$$$
+\int_{Q\times Q} |\nabla_y v(s,y)|^{p-2}\nabla_y v(s,y)\cdot\nabla_x \eta_{m,n}T_k(T_h(u)-T_h(v))^+$$
$$-\int_{Q\times Q}|\nabla_x u(t,x))|^{p-2}\nabla_x u(x,t) \cdot\nabla_y \eta_{m,n}T_k(T_h(u)-T_h(v))^+$$$$= \int_{Q\times Q}j_{T_k^+,h,v(s,y)}(u)(\eta_{m,n})_t+j_{T_k^-,h,u(x,t)}(v)(\eta_{m,n})_s)\,.$$

Now, using Green's Formula, we obtain
$$\int_{Q\times Q}\eta_{m,n}( |\nabla_x u|^{p-2}\nabla_x u\cdot\nabla_x T_k(T_h(u)-T_h(v))^+- |\nabla_y v|^{p-2}\nabla_y v\cdot\nabla_y T_k(T_h(u)-T_h(v))^+)$$$$+\int_{Q\times Q} |\nabla_y v|^{p-2}\nabla_y v\cdot\nabla_x\eta_{n,m} T_k(T_h(u)-T_h(v))^+-\int_{Q\times Q} |\nabla_x u|^{p-2}\nabla_x u\cdot\nabla_y \eta_{n,m} T_k(T_h(u)- T_h(v))^+$$$$=\int_{Q\times Q}\eta_{m,n}( |\nabla_x u|^{p-2}\nabla_x u- |\nabla_y v|^{p-2}\nabla_y v)\cdot (\nabla_x T_k(T_h(u)-T_h(v))^++\nabla_y T_k(T_h(u)- T_h(v))^+\geq 0\,.$$

where the last inequality is due to the monotonicity of the $p-$laplace operator.

Therefore,
$$\int_{Q\times Q}(|\nabla_x v|^{p-2}\nabla_x v-|\nabla_y u|^{p-2}\nabla_y u)\cdot(\nabla_x+\nabla_y)\eta_{m,n}T_k^+(T_h(u)-T_h(v))$$$$\leq \int_{Q\times Q}j_{T_k^+,h,v}(u)(\eta_{m,n})_t+j_{T_k^-,h,u)}(v)(\eta_{m,n})_s)\,.$$
By dividing the above expression by $k$ and letting $k\to\infty$,  we get
$$-\int_{Q\times Q}(u-v)sign_0^+(T_h(u)-T_h(v))((\eta_{m,n})_t+(\eta_{m,n})_s)
 $$$$+\int_{Q\times Q}(|\nabla_x v|^{p-2}\nabla_x v-|\nabla_y u|^{p-2}\nabla_y u)\cdot\nabla\eta_{m,n}sign_0^+(T_h(u)-T_h(v))\leq 0\,.$$

Passing to the limit when $n,m\to\infty$ yields:
$$-\int_{Q}(u-v)sign_0^+(T_h(u)-T_h(v))\phi'(t)\psi(x)
 $$$$+\int_{Q}\phi(t)(|\nabla v|^{p-2}\nabla v -|\nabla u|^{p-2}\nabla u)\cdot\nabla\psi(x)sign_0^+(T_h(u)-T_h(v))\leq 0\,.$$Having in mind the lower bound \eqref{lowerbd}, we can find $\varepsilon(h)>0$ such that $T_h(u)=T_h(v)$ in $\Omega\setminus\Omega_{\varepsilon(h)}\times [0,T)$, where
$$
 \Omega_{\varepsilon(h)}:=\{x\in\Omega: \text{\rm dist}(x,\partial\Omega)\geq\varepsilon(h)\}\,.
 $$\bk
 Then,
 $$\int_{Q}\phi(t)(|\nabla v|^{p-2}\nabla v -|\nabla u|^{p-2}\nabla u)\cdot\nabla\psi(x)sign_0^+(T_h(u)-T_h(v))= 0$$ for all $\psi\in \mathcal D(\Omega)$ such that $supp(\nabla\psi)\subseteq \Omega\setminus\Omega_{\varepsilon(h)}$. Therefore
$$-\int_{Q}(u-v)sign_0^+(T_h(u)-T_h(v))\psi(x)\phi'(t)\leq 0$$for all $\psi\in \mathcal D(\Omega)$ such that $supp(\nabla\psi)\subseteq \Omega\setminus\Omega_{\varepsilon(h)}$. Therefore,
 $$\frac{d}{dt}\int_\Omega (u(t)-v(t))sign_0^+(T_h(u(t))-T_h(v(t)))\psi(x)\,dx\leq 0\,,\quad {\rm a.e. \ } t\in [0,T]\,,$$for all $\psi\in \mathcal D(\Omega)$ such that $supp(\nabla\psi)\subseteq \Omega\setminus\Omega_{\varepsilon(h)}$. Thus, a.e. $t\in [0,T]$ $$\int_\Omega (u(t)-v(t))sign_0^+(T_h(u(t))-T_h(v(t)))\psi(x)\,dx$$$$\leq \int_\Omega (u(0)-v(0))sign_0^+(T_h(u(0))-T_h(v(0)))\psi(x)\,dx$$for all $\psi\in \mathcal D(\Omega)$ such that $supp(\nabla\psi)\subseteq \Omega\setminus\Omega_{\varepsilon(h)}$
 in particular for $\psi_h$ begin a cut-off function such that $\psi_h=1$ in $\Omega_{\frac{\varepsilon(h)}{2}}$. Letting $h\to +\infty$, and applying Fatou's Lemma we finally obtain $$\int_\Omega (u(t)-v(t))^+\,dx\leq \liminf_{h\to\infty} \int_\Omega (u(0)-v(0))sign_0^+(T_h(u(0))-T_h(v(0)))\psi_h(x)\,dx$$$$=\int_\Omega (u(0)-v(0))^+\,dx $$ a.e. $t\in [0,T)$
\end{proof}

\begin{proof}[Proof of Theorem \ref{maint}]
The proof follows by gathering together Theorem \ref{esiste} and Theorem \ref{unica}.
\end{proof}

\section{Further remarks and generalizations}
\subsection{Why $p<2$ ?} In this section we want to justify the assumption $p<2$. In the case $p\geq 2$ not only the possibility to find local estimates is forbidden but furthermore a nonexistence result  for large solutions can be proved in the sense we will specify in a while. Roughly speaking, we show that, in this case, the approximating solutions turn  to explode on a set of positive measure yielding, as a by-product, the impossibility to construct blow-up solutions.
Let us consider the approximating problems

\begin{equation}\label{pm2}
\begin{cases}
    (u_n)_{t}-\Delta_p u_n =0 & \text{in}\ Q,\\
    u_n=0  & \text{on}\ \{0\} \times \Omega,\\
 u_n=n &\text{on}\ (0,T)\times\partial\Omega\,,
  \end{cases}
\end{equation}
whose weak solutions are weak subsolutions of problems (\ref{senzah}).

\medskip
In the particular case $p=2$ we can prove something more precise, namely we have the following
\begin{proposition}
Let $p=2$ and let $u_n$ be the weak solution for problem \rife{senzah}. Then $u_n\to \infty$ as $n$ diverges at any point $(t,x)\in Q$.
\end{proposition}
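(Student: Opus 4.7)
The plan is to exploit linearity of the heat equation, which is available only in the borderline case $p=2$. Since the initial datum in \eqref{pm2} is zero and the boundary datum $n$ is constant in $(t,x)$, by linearity of $\partial_t - \Delta$ the unique weak solution is
\[
u_n(t,x) = n\, w(t,x),
\]
where $w$ is the weak solution of the same problem with boundary value $1$ in place of $n$, i.e.\
\[
w_t - \Delta w = 0 \text{ in } Q, \qquad w(0,\cdot)=0 \text{ in } \Omega, \qquad w = 1 \text{ on } (0,T)\times \partial\Omega.
\]

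The next step is to show that $w(t,x)>0$ for every $(t,x)\in Q$ with $t>0$. Set $z := 1 - w$. Then $z$ solves the heat equation with initial datum $z(0,\cdot)=1$ and boundary datum $0$ on $(0,T)\times \partial\Omega$; by the standard weak maximum principle, $0\le z \le 1$ in $Q$, hence $0\le w\le 1$. The strong maximum principle (or equivalently the Poisson/heat-kernel representation of $w$ in $\Omega$, which gives a strictly positive integral of a strictly positive kernel against the constant boundary datum $1$ over the parabolic lateral boundary) then implies that $z<1$ at every interior point with $t>0$. Equivalently, $w(t,x)>0$ for every $(t,x)\in Q$.

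Combining the two steps, for any fixed $(t,x)\in Q$ we have $u_n(t,x)=n\,w(t,x)$ with $w(t,x)>0$ independent of $n$, hence
\[
u_n(t,x) \longrightarrow +\infty \qquad \text{as } n\to\infty,
\]
which is the desired pointwise blow-up on all of $Q$. The only delicate point is the strict positivity of $w$ in the interior, but this is a textbook application of the strong maximum principle for the heat equation; no local estimates or compactness arguments are needed, and the linearity shortcut is precisely what fails when $p\neq 2$.
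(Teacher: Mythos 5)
There is a small but genuine gap: your scaling identity $u_n(t,x)=n\,w(t,x)$ is only valid for problem \eqref{pm2}, whose initial datum is zero, whereas the proposition is stated for the weak solution $u_n$ of problem \eqref{senzah}, whose initial datum is $u_{0n}=T_n(u_0)$ and is in general nonzero. For \eqref{senzah} the solution is not a multiple of a fixed profile (the boundary datum scales with $n$ but the initial datum does not), so the central identity on which your argument rests is false unless $u_0\equiv 0$. The repair is exactly the step the paper inserts: since $0\le T_n(u_0)$ and the lateral boundary data coincide, the solution $n\,w$ of \eqref{pm2} is a (sub)solution of \eqref{senzah}, so by the comparison principle $u_n\ge n\,w$ in $Q$, and the pointwise divergence then follows from $w>0$ in the interior.

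Apart from this missing comparison step, your construction is the same as the paper's: the paper takes $v$ solving the heat equation with initial datum $1$ and boundary datum $0$, notes by the strong maximum principle that $v<1$ in $Q$, and observes that $v_n=n(1-v)$ is the unique weak solution of \eqref{pm2}; your $w$ is precisely $1-v$ and your strong maximum principle argument for $w>0$ is the same as the paper's for $v<1$. So once you add the comparison $u_n\ge n\,w$ (justified by the nonnegativity of the initial data in \eqref{senzah}), your proof coincides with the paper's.
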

\begin{proof}
Consider the solution to the  auxiliary problem
$$
\begin{cases}
    v_{t}-\Delta v  =0 & \text{in}\ Q,\\
    v=1  & \text{on}\ \{0\} \times \Omega,\\
 v=0 &\text{on}\ (0,T)\times\partial\Omega\,.
  \end{cases}
$$
It is easy to see that the function $v_n =n(1-v)$ is the unique weak solution to problem \rife{pm2}. Moreover, observe that, by strong maximum principle, $v(t,x)<1$ for any $(t,x)\in Q$. So that, letting $n$ going to infinity, we get that $v_n\to\infty$, and consequently $u_n\to\infty$ as $n$ diverges at any point $(t,x)\in Q$.
\end{proof}

If $p>2$, due to the finite speed of propagation feature, the previous argument is no longer available and something different can happen. Anyway, nonexistence of large solutions in the sense specified above can still be proved as a consequence of the following

\begin{proposition}\label{nonexistence}
Let $p>2$ and let $u_n$ be the weak solution for problem \rife{pm2}. Then there exist a set $E\subseteq Q$ of positive measure such that  $u_n\to \infty$ as $n$ diverges at any point $(t,x)\in E$.
\end{proposition}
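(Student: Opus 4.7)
The plan is to combine a global energy identity with the Poincar\'e inequality and a short pigeonhole argument, exploiting the gap $n^{2}\ll n^{p}$ available when $p>2$. The comparison principle for the parabolic $p$-Laplacian applied to \rife{pm2} gives $u_{n}\le u_{n+1}$ (same zero initial datum, larger lateral datum), so the pointwise monotone limit $u(t,x):=\lim_{n}u_{n}(t,x)\in[0,+\infty]$ exists everywhere in $Q$. The set $E$ in the statement will be $\{u=+\infty\}$, and the claim reduces to showing $|E|>0$.

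For the energy estimate, since $u_{n}=n$ on $(0,T)\times\partial\Omega$ and $u_{n}(0,\cdot)=0$, the difference $u_{n}-n$ belongs to $L^{p}(0,T;W^{1,p}_{0}(\Omega))$ and is an admissible test function in the weak formulation. The standard chain rule for weak solutions of the parabolic $p$-Laplacian (cf.\ \cite{dib}) yields
\[
\frac{1}{2}\int_{\Omega}(u_{n}(T,x)-n)^{2}\,dx+\int_{Q}|\nabla u_{n}|^{p}\,dx\,dt=\frac{n^{2}|\Omega|}{2},
\]
so in particular $\|\nabla u_{n}\|_{L^{p}(Q)}^{p}\le C\,n^{2}$ with $C$ independent of $n$. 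Since $u_{n}(t,\cdot)-n\in W^{1,p}_{0}(\Omega)$ for a.e.\ $t$, the Poincar\'e inequality applied slicewise and then integrated in $t$ gives
\[
\|u_{n}-n\|_{L^{p}(Q)}^{p}\le C'\,n^{2}.
\]

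To conclude, assume for contradiction that $|E|=0$, so $u<+\infty$ almost everywhere on $Q$. Then $Q=\bigcup_{M>0}\{u\le M\}$ up to a null set and there exists $M>0$ with $A_{M}:=\{u\le M\}$ of measure $|A_{M}|\ge |Q|/2$. By monotonicity $u_{n}\le u\le M$ on $A_{M}$ for every $n$, hence for $n\ge 2M$
\[
\|u_{n}-n\|_{L^{p}(Q)}^{p}\ge \int_{A_{M}}(n-M)^{p}\,dx\,dt\ge \frac{|Q|}{2}\Bigl(\frac{n}{2}\Bigr)^{p}.
\]
Comparing with the upper bound $C'n^{2}$ forces $n^{p-2}\le \mathrm{const}$ for all sufficiently large $n$, which is absurd since $p>2$. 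Therefore $|E|>0$, and on $E$ the monotone sequence $u_{n}(t,x)$ diverges to $+\infty$.

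The only delicate point is the justification of the chain rule leading to the energy identity, but this is classical for weak solutions of \rife{pm2} in view of $u_{n}\in L^{p}(0,T;W^{1,p}(\Omega))$ with $(u_{n})_{t}\in L^{p'}(0,T;W^{-1,p'}(\Omega))$. Everything else is a plain dimensional comparison $n^{2}\ll n^{p}$, strictly specific to $p>2$, which explains why the argument cannot be reproduced in the subquadratic regime treated previously.
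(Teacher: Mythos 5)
Your argument is correct, but it proceeds by a genuinely different route from the paper. The paper's proof is a comparison/subsolution construction: it takes the solution $v$ of the auxiliary problem with initial datum $1$ and lateral datum $0$, shows $\Delta_p v=v_t\le 0$ in $\mathcal D'(Q)$ by time monotonicity, and then uses the homogeneity $\Delta_p(nw)=n^{p-1}\Delta_p w$ to check that $v_n:=n(1-v)$ satisfies $(v_n)_t=\frac{1}{n^{p-2}}\Delta_p v_n\le\Delta_p v_n$ precisely because $p>2$; comparison then gives the pointwise lower bound $u_n\ge n(1-v)$, and $E=\{v<1\}$. You instead test the equation with $u_n-n\in L^p(0,T;W^{1,p}_0(\Omega))$ to get the identity $\tfrac12\|u_n(T)-n\|_{L^2(\Omega)}^2+\|\nabla u_n\|_{L^p(Q)}^p=\tfrac{n^2|\Omega|}{2}$, apply Poincar\'e slicewise to get $\|u_n-n\|_{L^p(Q)}^p\le C n^2$, and exploit the mismatch $n^2\ll n^p$ on the would-be bounded set; the duality justification of the energy identity via the triple $W^{1,p}_0(\Omega)\subset L^2(\Omega)\subset W^{-1,p'}(\Omega)$ (valid since $p>2$) and the monotonicity $u_n\le u_{n+1}$ from comparison are both standard, so the proof is complete modulo the harmless remark that the monotone limit $u$ and the set $E$ are defined up to a null set, which is all the statement requires. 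It is worth noting what each approach buys: your energy argument is more elementary (no auxiliary problem, no explicit subsolution) and in fact yields more than you claim, since for each fixed $M$ the bound $|\{u\le M\}|(n-M)^p\le Cn^2$ forces $|\{u\le M\}|=0$ as $n\to\infty$, i.e.\ $u_n\to\infty$ a.e.\ in $Q$, not merely on a set of positive measure; the paper's construction, on the other hand, gives the quantitative pointwise lower bound $u_n\ge n(1-v)$, hence concrete information on where blow-up occurs (near the lateral boundary, where $v<1$), and it runs parallel to the explicit computation the paper gives for $p=2$.
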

\begin{proof}
Consider the solution to the  auxiliary problem
$$
\begin{cases}
    v_{t}-\Delta_p v  =0 & \text{in}\ Q,\\
    v=1  & \text{on}\ \{0\} \times \Omega,\\
 v=0 &\text{on}\ (0,T)\times\partial\Omega\,.
  \end{cases}
$$
 Again,  consider $v_n =n(1-v)$.
First of all, due to the boundary condition and the regularity of the solution, there exists a set $E$ of positive measure on $Q$ such that $v(x,t)<1 $ on $E$. Our aim is to show that $v_n$ is a subsolution to problem \rife{senzah}, this fact will easily imply the result.

Our first claim is to check that, with these data, $\Delta_p v \leq 0$ in $\mathcal{D}'(Q)$, or, equivalently, that
$$
- \int_Q v\eta_t \leq  0
$$
for any $0\leq \eta\in \mathcal{D}(Q)$. This is a consequence of comparison principle that easily implies $ v(t+s,x)\leq v(t,x)$ a.e. on $Q$.  Therefore, for fixed $\eta$ and $\tau>0$ small enough we have
$$
\int_0^T \into \frac{\eta (t+\tau)-\eta(t)}{\tau} v=\int_{\tau}^{T}\into\frac{\eta(t)}{\tau} (v(t)-v(t-\tau))\geq 0.
$$
that implies the result by passing to the limit on $\tau$.

Now, since $\Delta_p v_n \geq 0$, we can check that $v_n$ is a subsolution for problem \rife{senzah}, in fact, recalling that $p>2$, we have
$$
(v_n)_t =-n \Delta_p v = \frac{1}{n^{p-2}}\Delta_p v_n\leq  \Delta_p v_n
$$
in $ \mathcal{D}'(Q)$, and this concludes the proof.
\end{proof}

\bk
\subsection{General monotone operators}
The existence result of Theorem \ref{esiste} can be straightforwardly extended to more general monotone operators of Leray-Lions type.
Consider the problem
\begin{equation}\label{nonlinu}
\begin{cases}
    u_t-\dive(a(t,x,\nabla u))=0 & \text{in}\ Q,\\
u=u_0  & \text{on}\ \{0\} \times \Omega,\\
u=+\infty &\text{on}\ (0,T)\times\partial\Omega,
  \end{cases}
\end{equation}
where  $a : Q \times \rn \to \rn$ is a Carath\'eodory function (i.e., $a(\cdot,\cdot,\xi)$
is measurable on $Q$ for every $\xi$ in $\rn$, and $a(t,x,\cdot)$ is
continuous on $\rn$ for almost every $(t,x)$ in $Q$), such that the
following holds:
\be
a(t,x,\xi) \cdot \xi \geq \al|\xi|^p,
\label{coercp}
\ee
\be
|a(t,x,\xi)| \leq \beta[b(t,x) + |\xi|^{p-1}],
\label{cont}
\ee
\be
[a(t,x,\xi) - a(t,x,\eta)] \cdot (\xi - \eta) > 0,
\label{monot}
\ee
for almost every $(t, x)$ in $Q$, for every $\xi, \eta \in \rn$, with
$\xi \neq \eta$, where $1<p<2$,
$\al$ and $\beta$ are two positive constants, and
$b$ is a nonnegative function in $L^{p'}(Q)$. Here again $u_0\in L^1_{loc}(\Omega)$.

{\begin{definition}\label{deflargeentropygen}A measurable function $u:Q\to \R$ is an entropy large solution of \eqref{nonlinu}  if $u\in C(0,T;L^1_{loc}(\Omega))$, $T_k(u)\in L^p(0,T;W^{1,p}_k(\Omega))$, for all $k>0$, $|\nabla u|^{p-1}\in L^1_{loc}(Q)$,  and $$\int_{Q} \eta a(t,x,\nabla u)\cdot \nabla S(T_h(u)-T_h(l))+\int_{Q} S(T_h(u)-T_h(l))a(t,x,\nabla u)\cdot\nabla \eta $$$$=\int_{Q} j_{S,h,l}(u)\eta_t$$
for all $S\in\mathcal P$, $h>0$ $l\in\R$, $\eta\in \mathcal D(Q)$ and $j_{S,h,l}(r):=\int_l^r S(T_h(s)-T_h(l))\, ds$. Moreover, $u(0)=u_0$ in $L^1_{loc}(\Omega)$.
\end{definition}}
We have
\begin{theorem}\label{esisteg}
There exists an entropy large solution for problem \rife{nonlinu}.	
\end{theorem}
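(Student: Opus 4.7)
The plan is to mimic the approximation scheme used for Theorem \ref{esiste}, replacing the specific $p$-Laplacian vector field $|\nabla u|^{p-2}\nabla u$ by the Leray-Lions field $a(t,x,\nabla u)$ throughout. Consider the approximating problems
\begin{equation*}
\begin{cases}
(u_n)_t - \dive(a(t,x,\nabla u_n)) = 0 & \text{in } Q, \\
u_n = T_n(u_0) & \text{on } \{0\}\times\Omega, \\
u_n = n & \text{on } (0,T)\times\partial\Omega,
\end{cases}
\end{equation*}
which admit a weak solution $u_n$ by standard Leray-Lions theory under \eqref{coercp}--\eqref{monot}. As in the proof of the auxiliary regularization result, each $u_n$ satisfies the corresponding entropy identity of Definition \ref{defentropy3} (with $a(t,x,\nabla u_n)$ in place of $|\nabla u_n|^{p-2}\nabla u_n$), via the same Steklov-type argument that only uses monotonicity of the scalar map $s\mapsto j_{S,h,l}(s)$.

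Next, I would derive the analogues of the estimates in Proposition \ref{pro}. The global estimate $\int_Q |\nabla T_k(u_n)|^p\, dxdt \leq Ck$ is obtained by taking $T_k(u_n)-k$ as test function and using the coercivity \eqref{coercp}; the local bound $\|u_n\|_{L^\infty(0,T;L^1_{loc}(\Omega))}\leq C$ is proved as in \cite{lepe}, whose argument only relies on the coercivity and growth structure together with $1<p<2$. Simon's compactness criterion then furnishes (up to subsequence) the a.e.\ and $L^1_{loc}(Q)$ convergences $u_n\to u$, the weak convergence $T_k(u_n)\rightharpoonup T_k(u)$ in $L^p(0,T;W^{1,p}_k(\Omega))$, and (after the strong convergence below is proved) $a(t,x,\nabla u_n)\to a(t,x,\nabla u)$ in $L^1_{loc}(Q)$ by Vitali together with the growth bound \eqref{cont}.

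The main obstacle is the strong convergence $T_k(u_n)\to T_k(u)$ in $L^p(0,T;W^{1,p}_{loc}(\Omega))$, i.e.\ the analogue of Lemma \ref{fuerte}. I would repeat the six steps of that proof verbatim: estimate on the strips $\{j\leq u_n\leq j+1\}$ obtained by testing with $T_1(G_j(u_n))\xi$; Landes regularization $T_k(u)_\nu$; choice of test function $\psi=(T_k(u_n)-T_k(u)_\nu)S_j'(u_n)\xi$ with $\xi$ a spatial cut-off as in \eqref{xi}; control of the time-derivative term; vanishing of the strip energy; and finally the Minty-type monotonicity argument based on \eqref{monot} (Lemma 5 in \cite{bmp} is stated exactly for Leray-Lions fields, so it applies unchanged), yielding
\begin{equation*}
\int_Q \bigl(a(t,x,\nabla T_k(u_n)) - a(t,x,\nabla T_k(u)_\nu)\bigr)\cdot \nabla(T_k(u_n)-T_k(u)_\nu)\, S_j'(u_n)\xi \leq \omega(n,\nu)+\omega(j).
\end{equation*}
Once strong convergence is in hand, passing to the limit in each term of the entropy identity for $u_n$ is routine: the gradient terms converge by the strong convergence of $T_h(u_n)$ combined with the growth \eqref{cont} and Vitali; the time-derivative term by dominated convergence on $j_{S,h,l}(u_n)$; the initial condition $u(0)=u_0$ and the truncation trace $T_k(u)\in L^p(0,T;W^{1,p}_k(\Omega))$ are inherited from $u_n$ as in the proof of Theorem \ref{esiste}. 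This yields an entropy large solution to \eqref{nonlinu} in the sense of Definition \ref{deflargeentropygen}.
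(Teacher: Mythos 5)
Your proposal is correct and follows essentially the same route as the paper, whose proof of Theorem \ref{esisteg} is just the remark that the argument of Theorem \ref{esiste} carries over verbatim once one notes that the local estimates of \cite{lepe} and the strong convergence of truncations as in \cite{lp} hold for general Leray--Lions operators. You have simply spelled out those steps (approximation, entropy identity via Steklov regularization, estimates, Simon compactness, local strong convergence of truncations and the monotonicity argument), correctly restricting the strong convergence claim to $L^p(0,T;W^{1,p}_{loc}(\Omega))$ since the lower bound \eqref{lowerbdg} is unavailable here.
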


The proof runs exactly as in Theorem \ref{esiste} keeping in mind that both the estimates in \cite{lepe} and the the local strong convergence of truncations (as in \cite{lp}) hold true for a such general framework.

In contrast, we saw that uniqueness is based on the estimate
\begin{equation}
   \label{lowerbdg}\frac{C_0t^\frac{1}{2-p}}{{\rm dist}(\partial\Omega)^\frac{p}{2-p}}\leq u(x,t)
 \end{equation}
which is no longer available here. On the other hand, note that, in the proof of uniqueness,  we only need a lower bound providing that $u(x,t)\to\infty$ uniformly as $dist(x,\partial\Omega)\to 0$. Therefore we can state the following partial uniqueness result.
\begin{theorem}\label{unig}
There is at most one entropy large solution for problem \rife{nonlinu} satisfyng $u(x,t)\to\infty$ uniformly  as $dist(x,\partial\Omega)\to 0$.	\end{theorem}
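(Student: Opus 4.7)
My plan is to follow the doubling of variables strategy from Theorem \ref{uniqpent} verbatim, replacing $|\nabla u|^{p-2}\nabla u$ by $a(t,x,\nabla u)$ throughout, and to isolate the single step that genuinely requires new input. Let $u,v$ be two entropy large solutions with the same initial datum $u_0$, both satisfying $u(t,x),v(t,x)\to+\infty$ uniformly as $\mathrm{dist}(x,\partial\Omega)\to 0$. Using the entropy formulation of Definition \ref{deflargeentropygen}, I would take $u=u(t,x)$, $l=v(s,y)$, $S=T_k^+$ in one copy and $v=v(s,y)$, $l=u(t,x)$, $S=T_k^-$ in the other, test both against the mollifier $\eta_{m,n}(t,x,s,y):=\rho_m(x-y)\tilde\rho_n(t-s)\phi((t+s)/2)\psi((x+y)/2)$ with $\psi\in\mathcal D(\Omega)$, and add.

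The one substantive departure from Theorem \ref{uniqpent} is the treatment of the principal term
\[
I_{m,n}:=\int_{Q\times Q}\eta_{m,n}\bigl(a(t,x,\nabla_x u)-a(s,y,\nabla_y v)\bigr)\cdot\bigl(\nabla_x-\nabla_y\bigr)T_k(T_h(u)-T_h(v))^+.
\]
In the $p$-Laplacian case pointwise monotonicity makes this nonnegative directly; here the nonlinearity also depends on $(t,x)$, so I would decompose
\[
a(t,x,\nabla_x u)-a(s,y,\nabla_y v)=\bigl[a(t,x,\nabla_x u)-a(t,x,\nabla_y v)\bigr]+\bigl[a(t,x,\nabla_y v)-a(s,y,\nabla_y v)\bigr].
\]
The first bracket, paired with $\nabla_x T_h(u)-\nabla_y T_h(v)$, is pointwise nonnegative by hypothesis \eqref{monot} (the two evaluations of $a$ now share the base point $(t,x)$). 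The second bracket produces a remainder $I^{(2)}_{m,n}=\omega(m,n)$ by a standard mollification argument: $a(\cdot,\cdot,\nabla_y v(s,y))$ is strongly continuous in $(t,x)$ by Carath\'eodory continuity, the growth bound \eqref{cont} provides equi-integrability, and $\nabla T_h(u),\nabla T_h(v)\in L^p_{\loc}$ control the pairing.

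With this replacement for the monotonicity step, the rest of the argument is a transcription of Theorem \ref{uniqpent}: divide by $k$ and let $k\to\infty$ to recover the terms in $(u-v)\,\mathrm{sign}_0^+(T_h(u)-T_h(v))$, let $m,n\to\infty$ to collapse to a single integral over $Q$, and invoke the uniform blow-up hypothesis in place of \eqref{lowerbdg}: for each $h>0$ there exists $\varepsilon(h)>0$ such that $T_h(u)=T_h(v)=h$ on $(\Omega\setminus\Omega_{\varepsilon(h)})\times(0,T)$, so the leftover gradient integral vanishes as soon as $\nabla\psi$ is supported outside $\Omega_{\varepsilon(h)}$. Choosing $\psi=\psi_h\nearrow\chi_\Omega$ and applying Fatou's lemma with $u_0=v_0$ forces $(u-v)^+\equiv 0$ a.e.\ in $(0,T)$; swapping the roles of $u$ and $v$ concludes. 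The main obstacle I foresee is making $I^{(2)}_{m,n}=\omega(m,n)$ fully rigorous given that $a$ is only measurable in $(t,x)$; everything else is a mechanical translation of the $p$-Laplacian case.
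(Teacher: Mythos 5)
Your route is the one the paper intends: the paper's entire proof of Theorem \ref{unig} is the remark that the argument of Theorem \ref{unica} (i.e.\ the doubling proof of Theorem \ref{uniqpent}) extends, once the lower bound \eqref{lowerbdg} is replaced by the assumed uniform blow-up, and that is exactly how you treat the boundary layer, the choice of $\psi_h$ with $\supp(\nabla\psi_h)$ outside $\Omega_{\varepsilon(h)}$, and the final Fatou step. The point where you depart from a verbatim transcription --- the monotonicity step under doubling --- is precisely where the extension is not automatic, and your decomposition of $a(t,x,\nabla_x u)-a(s,y,\nabla_y v)$ is the natural move. (Minor slip: the difference of the fields should be paired with $(\nabla_x+\nabla_y)T_k^+(T_h(u)-T_h(v))$, i.e.\ with $T_k^{+\prime}(\cdot)\,(\nabla_x T_h(u)-\nabla_y T_h(v))$, as in the paper; your prose says this, but your display for $I_{m,n}$ has $(\nabla_x-\nabla_y)$.)

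However, the justification you give for $I^{(2)}_{m,n}=\omega(m,n)$ is not correct as stated, and you concede as much in your last sentence: a Carath\'eodory function is continuous in $\xi$ and only \emph{measurable} in $(t,x)$, so there is no ``strong continuity in $(t,x)$'' to invoke, and the growth bound \eqref{cont} gives integrability, not a modulus of continuity in the doubled variables. What is true is continuity of translations in $L^{p'}$-mean for each fixed-$\xi$ map $(t,x)\mapsto a(t,x,\xi)$, and a Lebesgue-point argument does close the estimate when the $(t,x)$-dependence factors out (e.g.\ $a(t,x,\xi)=b(t,x)|\xi|^{p-2}\xi$) or when $a$ is continuous in $(t,x)$ locally uniformly in $\xi$; but for a general Carath\'eodory $a$ the exceptional null set depends on the value $\nabla_y v(s,y)$, and passing from a countable dense family of $\xi$'s to $\nabla_y v(s,y)$ requires some equicontinuity in $\xi$ on large sets (a Scorza--Dragoni type reduction) that you neither assume nor carry out. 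So, as written, the commutator estimate --- the only genuinely new ingredient relative to Theorem \ref{uniqpent} --- remains a gap. In fairness, the paper says nothing about this point either: in the $p$-Laplacian case monotonicity is base-point free, and the claimed ``easy extension'' silently applies \eqref{monot} across different base points; you have correctly located where the real work lies, but you have not done it.
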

The proof of this fact is an easy extension of the argument in the proof of Theorem \ref{unica}.

\section{The case $p=1$. The total variation flow}

In this section we study the large entropy solutions for the Total Variation Flow; i.e.

\begin{equation}
  \label{large1lp}\left\{\begin{array}
    {cc}\displaystyle \frac{\partial u}{\partial t}=\div\left(\frac{Du}{|Du|}\right) & \quad {\rm in \ } (0,T)\times\Omega \\ \\ u(t,x)=+\infty & \quad {\rm on \ } (0,T)\times\partial\Omega \\ \\ u(0,x)=u_0 & \quad {\rm in \ } \Omega
  \end{array}\right.\,,
\end{equation}
with $u_0\in L^1(\Omega)$.

To prove existence of solutions of problem \eqref{large1lp} we use the techniques of completely accretive
operators and the Crandall-Liggett semigroup generation theorem in \cite{crli}. {We point out that, contrary to the case $1<p<2$ our framework will be $L^1(\Omega)$ and not $L^1_{loc}(\Omega)$. This is due to the use of the nonlinear semigroup theory but, as we show in this case, if the initial data is in $L^1(\Omega)$, then the solution is still in $L^1(\Omega)$. This fact is not true anymore if $p>1$ by the estimate \eqref{lowerbd}}.

\medskip
We recall now the notion of $m-$completely accretive operators introduced in \cite{becr}.
Let $\mathcal M(\Omega)$ be the space of measurable functions in $\Omega$. Given $u, v\in \mathcal M(\Omega)$,
we shall write
$$u<<v {\rm \ if \ and \ only \ if \ } \int_\Omega j(u) dx\leq \int_\Omega
j(v) dx$$
for all $j\in J_0$ where
$J_0:=\{ j: \R\to [0, +\infty],$ convex, l.s.c., $j(0)=0\}$. Let $B\subseteq {\mathcal M}(\Omega)\times {\mathcal M}(\Omega)$ be an operator.  $B$ is said to be completely accretive if
$$u-\overline u <<u-\overline u +\lambda(v-\overline v) {\rm \ for \ all\ } \lambda>0 {\rm \ and \ all \ } (u, v), (\overline u, \overline v ) \in B. $$
Let
$P_0:=\{p\in C^\infty(\R) : 0\leq p'\leq 1,$ $supp( p')$ is compact and $0\notin supp(p)\}$.
If $B\subseteq L^1(\Omega)\times L^1 (\Omega)$, then $B$ is completely accretive if and only if
$$
\int_\Omega p(u-\overline u )(v-\overline v)\geq 0 {\rm \ for \ any  \ } p\in P_0,\quad  (u, v), (\overline u , \overline v ) \in B.$$
A completely accretive operator $B$ in $L^1 (\Omega)$ is said to be m-completely
accretive if $R(I+\lambda B)=L^1 (\Omega)$ for any $\lambda>0$.

We consider now the elliptic problem associated to \eqref{large1lp}:

\begin{equation}
  \label{elarge1lp}\left\{\begin{array}
    {cc}\displaystyle u-v=\div\left(\frac{Du}{|Du|}\right) & \quad {\rm in \ } \Omega \\ \\ u(t,x)=+\infty & \quad {\rm on \ } \partial\Omega \\
  \end{array}\right.\,,
\end{equation}
with $v\in L^1(\Omega)$. We proceed as in the case that $1<p<2$; i.e. approximating the problem \eqref{elarge1lp} by problems
\begin{equation}
  \label{elarge1lpn}\left\{\begin{array}
    {cc}\displaystyle u-v=\div\left(\frac{Du}{|Du|}\right) & \quad {\rm in \ } \Omega \\ \\ u(t,x)=n& \quad {\rm on \ } \partial\Omega \\
  \end{array}\right.\,
\end{equation}and letting $n\to\infty$.

In \cite{ACM} it is defined the following operator associated to problem \eqref{elarge1lpn}:

\begin{definition}
  \label{depopn} $(u,v)\in \mathcal A_n$ iff $u,v\in L^1(\Omega)$, $u\in TBV(\Omega)$ and there exists $\z\in X(\Omega)$ with $\|\z\|_{\lio}\leq 1$, $v=-\div \z $ in $\mathcal D'(\Omega)$ such that $$[\z,\nu]\in {\rm sign} (n-u)\,,\qquad \mathcal H^{N-1}-{\rm a.e \ in \ } \partial\Omega$$ and for all $S\in \mathcal P,$
    $$\int_\Omega (\z,D S(u))=\int_\Omega |DS(u)|\,.$$
\end{definition}

and the following result is proved:
\begin{theorem}\label{thAn}
    The operator $\mathcal A_n$ is m-completely accretive in $L^1(\Omega)$ with dense domain.
  \end{theorem}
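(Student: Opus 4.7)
The plan is to follow the by-now standard pattern for $1$-Laplacian-type operators with non-homogeneous Dirichlet data, modelled on the zero-boundary-datum version in \cite{ACM}: establish (i) complete accretivity, (ii) the range condition $R(I+\mathcal{A}_n)=L^1(\Omega)$, and (iii) density of $D(\mathcal{A}_n)$ in $L^1(\Omega)$.

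For \emph{complete accretivity}, given $(u_i,v_i)\in\mathcal{A}_n$ with associated fields $\z_i$ and $p\in P_0$, I would test the difference $v_1-v_2=-\div(\z_1-\z_2)$ against $p(u_1-u_2)$. Since $T_k(u_i)\in BV(\Omega)$ and $p(u_1-u_2)\in BV(\Omega)\cap L^\infty(\Omega)$, the generalized Green's formula \eqref{Green} gives
\[
\int_\Omega p(u_1-u_2)(v_1-v_2)\,dx = \int_\Omega(\z_1-\z_2,Dp(u_1-u_2))-\int_{\partial\Omega}[\z_1-\z_2,\nu]p(u_1-u_2)\,d\mathcal{H}^{N-1}.
\]
The boundary term has the right sign because $[\z_i,\nu]\in\operatorname{sign}(n-u_i)$ and $p$ is nondecreasing. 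The interior pairing is handled through the identity $(\z_i,DS(u_i))=|DS(u_i)|$ valid for all $S\in\mathcal{P}$: combining this with $\|\z_i\|_\infty\leq 1$ and a chain-rule/coarea argument applied to $p(u_1-u_2)$, one identifies $\z_i$ with $Du_i/|Du_i|$ on the relevant level sets and concludes non-negativity.

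For the \emph{range condition}, I would first take $v\in L^2(\Omega)\cap L^\infty(\Omega)$ and minimize the relaxed functional
\[
\mathcal{F}(u)=\int_\Omega |Du|+\int_{\partial\Omega}|u-n|\,d\mathcal{H}^{N-1}+\frac{1}{2}\int_\Omega (u-v)^2\,dx
\]
on $BV(\Omega)\cap L^2(\Omega)$. Coercivity and $L^1$-lower semicontinuity give a unique minimizer $u$, and an Euler--Lagrange analysis in the spirit of Anzellotti produces a field $\z\in L^\infty(\Omega;\R^N)$ with $\|\z\|_\infty\leq 1$, $-\div\z=v-u$ in $\mathcal{D}'(\Omega)$, the pairing identity $(\z,DS(u))=|DS(u)|$ for every $S\in\mathcal{P}$, and the boundary relation $[\z,\nu]\in\operatorname{sign}(n-u)$. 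For general $v\in L^1(\Omega)$ one approximates by $v_k\in L^\infty(\Omega)$ and uses the $L^1$-contraction coming from (i) to pass to the limit, noting that the graph of $\mathcal{A}_n$ is $L^1\times L^1$-closed.

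Finally, \emph{density of the domain} follows abstractly: once (i) and (ii) are known the resolvents $J_\lambda=(I+\lambda\mathcal{A}_n)^{-1}$ are $L^1$-contractions, and for any sufficiently regular $f$ one checks directly via energy estimates on $\mathcal{F}$ that $J_\lambda f\to f$ in $L^1(\Omega)$ as $\lambda\to 0^+$; approximating a general $f\in L^1(\Omega)$ by such regular functions completes the argument. The principal obstacle in the whole program is the precise treatment of the boundary trace condition $[\z,\nu]\in\operatorname{sign}(n-u)$: because $u$ need not match $n$ on $\partial\Omega$ in the trace sense, the relaxed functional admits a boundary jump paid for by the surface penalty, and extracting the correct sign condition on $[\z,\nu]$ from the minimality of $u$ relies on testing $\mathcal{F}$ against perturbations $u+t(\varphi-u)$ with $\varphi\in BV(\Omega)\cap L^\infty(\Omega)$ and carefully unpacking the Anzellotti integration-by-parts formula, which is where the regularity assumptions on $\partial\Omega$ enter.
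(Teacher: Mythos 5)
First, note that the paper does not prove this statement at all: Theorem \ref{thAn} is quoted from \cite{ACM}, where the operator $\mathcal A_n$ of Definition \ref{depopn} (the TV--flow Dirichlet operator with constant boundary datum $n$) is introduced and its m-complete accretivity is established. The paper's own work starts with the limit operator $\mathcal A$, whose m-complete accretivity (Theorem \ref{thA}) is then deduced \emph{from} Theorem \ref{thAn} together with the uniform bound of Remark \ref{rem}. So the relevant comparison is with \cite{ACM} and with the paper's proof of the analogous Theorem \ref{thA}.

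Against that benchmark your outline is reasonable in structure but has two soft spots, one of which is a genuine gap. In the accretivity step, $p(u_1-u_2)$ is not an admissible test function for Green's formula \eqref{Green}: $u_1,u_2$ belong only to $TBV(\Omega)$, and nothing guarantees $p(u_1-u_2)\in BV(\Omega)$; one must test with $p(T_k(u_1)-T_k(u_2))$ and let $k\to\infty$, exactly as the paper does for $\mathcal A$ through the variational characterization of Proposition \ref{prop} (whose $\mathcal A_n$-analogue is Proposition 2 of \cite{ACM}). Moreover, the nonnegativity of $\int_\Omega(\z_1-\z_2,Dp(T_k(u_1)-T_k(u_2)))$ is not a ``coarea identification of $\z_i$ with $Du_i/|Du_i|$'' --- which is not literally meaningful for the singular parts --- but a monotonicity argument using the pairing identities $(\z_i,DS(u_i))=|DS(u_i)|$, $\|\z_i\|_{\lio}\le1$ and Anzellotti's theory \cite{Anzellotti1}. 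More seriously, in the range condition the entire difficulty sits in the step you describe as ``an Euler--Lagrange analysis in the spirit of Anzellotti'': extracting from minimality of the relaxed functional a field $\z$ with $\|\z\|_{\lio}\le 1$, $-{\rm div}\,\z=v-u$, $(\z,DS(u))=|DS(u)|$ and $[\z,\nu]\in{\rm sign}(n-u)$ amounts to characterizing the subdifferential of $u\mapsto\int_\Omega|Du|+\int_{\partial\Omega}|u-n|\,d\mathcal H^{N-1}$, which needs a Fenchel--Rockafellar duality argument or an approximation procedure; in \cite{ACM} the field $\z$ is in fact obtained by solving approximating resolvent problems and passing to the limit, with the pairing identity and the boundary relation recovered by lower semicontinuity and monotonicity. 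As written, your proposal asserts precisely the conclusion at this point. The reduction from $v\in L^1(\Omega)$ to $v\in L^\infty(\Omega)$ by contraction plus closedness of the graph, and the density argument via $\frac12\int_\Omega(J_\lambda f-f)^2\le\lambda\bigl(\int_\Omega|Df|+\int_{\partial\Omega}|f-n|\,d\mathcal H^{N-1}\bigr)$ for smooth $f$, are fine and mirror the paper's own Proposition \ref{auxprop}.
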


Letting now formally $n\to\infty$ in Definition \ref{depopn} we define the following operator in $L^1(\Omega)$ as the one corresponding to problem (\ref{elarge1lp}):

  \begin{definition}\label{52}
    We say that $(u,v)\in \mathcal A$ iff $u,v\in L^1(\Omega)$, $u\in TBV(\Omega)$ and there exists $\z\in X(\Omega)$ with $\|\z\|_{\lio}\leq 1$, $v=-\div \z $ in $\mathcal D'(\Omega)$ such that $$[\z,\nu]=1\,,\qquad \mathcal H^{N-1}-{\rm a.e \ in \ } \partial\Omega$$ and
    $$\int_\Omega (\z,D S(u))=\int_\Omega |DS(u)|\,,\quad {\rm for \ all \ } S\in \mathcal P.$$
  \end{definition}

We also have in this case,
\begin{theorem}\label{thA}
    The operator $\mathcal A$ is m-completely accretive in $L^1(\Omega)$ with dense domain.
  \end{theorem}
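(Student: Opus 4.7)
The strategy is to realize $\mathcal{A}$ as a monotone limit of the operators $\mathcal{A}_n$ from Theorem \ref{thAn} and to transfer m-complete accretivity through the limit. Fix $\lambda>0$ and $v\in L^1(\Omega)$; by Theorem \ref{thAn} there exist $u_n\in L^1(\Omega)$ and fields $\z_n\in X(\Omega)$ with $\|\z_n\|_{\infty}\le 1$, $u_n-\lambda\,\div\z_n=v$ in $\mathcal{D}'(\Omega)$, boundary condition $[\z_n,\nu]\in{\rm sign}(n-u_n)$ a.e.\ on $\partial\Omega$, and the Anzellotti identity $\int_\Omega(\z_n,DS(u_n))=\int_\Omega|DS(u_n)|$ for every $S\in\mathcal{P}$. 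Complete accretivity of $\mathcal{A}_n$, applied with $u_{n+1}$ as comparison element (which satisfies the problem with a less restrictive Dirichlet datum), yields $u_n\le u_{n+1}$ almost everywhere.

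The main obstacle I expect is a uniform $L^{\infty}$ bound on $u_n$ when $v\in L^{\infty}(\Omega)$. The plan is to construct an explicit supersolution $w\equiv M$ by exhibiting a vector field $\tilde\z$ with $\|\tilde\z\|_{\infty}\le 1$ and $[\tilde\z,\nu]=1$ $\mathcal{H}^{N-1}$-a.e.\ on $\partial\Omega$; a natural choice near the boundary is $\tilde\z(x)=-\nabla d(x)$ with $d(x)={\rm dist}(x,\partial\Omega)$. The uniform interior ball condition of radius $s_0$ provides $\|\div\tilde\z\|_{\infty}\le C(s_0,N)$, so that for $M\ge\|v\|_{\infty}+\lambda\|\div\tilde\z\|_{\infty}$ one has $M-\lambda\,\div\tilde\z\ge v$; since $(M,-\div\tilde\z)\in\mathcal{A}_n$ for every $n\ge M$ (the Anzellotti identity being trivial for the constant $M$), the comparison underlying complete accretivity of $\mathcal{A}_n$ forces $u_n\le M$. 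For general $v\in L^1(\Omega)$ I would truncate $v$ and use the $L^1$-contractivity of the resolvents of $\mathcal{A}_n$ to close the argument.

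With $u_n\uparrow u$ in $L^1(\Omega)$ and $\z_n\to\z$ weakly-$\ast$ in $L^{\infty}(\Omega;\R^N)$, the equation $u-\lambda\,\div\z=v$ passes to the limit in $\mathcal{D}'(\Omega)$. Uniform bounds on $|DT_k(u_n)|(\Omega)$, obtained by testing the approximate problem with $T_k(u_n)-k$, yield $T_k(u)\in BV(\Omega)$ for every $k$, hence $u\in TBV(\Omega)$. For the boundary condition, once $n>M$ the inclusion $[\z_n,\nu]\in{\rm sign}(n-u_n)$ collapses to $[\z_n,\nu]=1$ a.e.\ on $\partial\Omega$; weak-$\ast$ stability of the normal trace via the Green formula (\ref{Green}) then delivers $[\z,\nu]=1$ $\mathcal{H}^{N-1}$-a.e. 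The Anzellotti identity $\int_\Omega(\z,DS(u))=\int_\Omega|DS(u)|$ follows by passing to the limit using lower semicontinuity of $\int|DS(\cdot)|$ combined with the a priori bound $\int_\Omega(\z,DS(u))\le\int_\Omega|DS(u)|$ granted by (\ref{acota}).

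Complete accretivity of $\mathcal{A}$ is then inherited from that of $\mathcal{A}_n$: the inequality $\int_\Omega p(u-\overline u)(v-\overline v)\ge 0$ for every $p\in P_0$ is stable under the $L^1$ convergence of the approximations since $p$ is bounded with $p(0)=0$. Finally, density of $D(\mathcal{A})$ in $L^1(\Omega)$ follows from the fact that the resolvents $J_\lambda^{\mathcal{A}}$, constructed above, are $L^1$-contractive and satisfy $J_\lambda^{\mathcal{A}}v\to v$ in $L^1(\Omega)$ as $\lambda\to 0^{+}$ for sufficiently regular $v$, an argument that parallels the one used in \cite{ACM} for $\mathcal{A}_n$.
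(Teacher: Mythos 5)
There is a genuine gap, and it sits in the two places where you rely on passing structural properties of $\mathcal A_n$ to the limit. First, complete accretivity of $\mathcal A$ is a statement about \emph{arbitrary} pairs $(u,v),(\overline u,\overline v)\in\mathcal A$, whereas your argument only controls pairs that arise as $L^1$-limits of elements of $\mathcal A_n$; you never show that every element of $\mathcal A$ is such a limit, so nothing is "inherited". The paper avoids this entirely: it first proves the characterization of $\mathcal A$ by the up-to-the-boundary variational inequality (\ref{1ellipticdef}) (Proposition \ref{prop}) and then proves complete accretivity \emph{directly}, testing (\ref{1ellipticdef}) for $(u,v)$ with $w=S(u)-p(T_k(u)-T_k(\overline u))$ and for $(\overline u,\overline v)$ with $w=S(\overline u)+p(T_k(u)-T_k(\overline u))$, adding, and letting $k\to\infty$. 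Second, your limit/closure step is not justified as written: from (\ref{acota}) and lower semicontinuity of $\int_\Omega|DS(\cdot)|$ you only get $\int_\Omega(\z,DS(u))\le\int_\Omega|DS(u)|$, never the reverse inequality, and the reverse does not follow by soft arguments because $\int_\Omega(\z_n,DS(u_n))$ is not continuous under weak-$*$ convergence of $\z_n$ together with $L^1$ (or weak-$BV$) convergence of $u_n$, and because $BV$ traces — hence the boundary condition $[\z_n,\nu]=1$ and the boundary terms produced by Green's formula (\ref{Green}) — are not stable under these convergences. This same defect undermines your plan for general $v\in L^1(\Omega)$ ("truncate and use contractivity"): that plan presupposes exactly a closedness result for $\mathcal A$, which is the nontrivial point. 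The paper's device is precisely Proposition \ref{prop} combined with the lower semicontinuity of the \emph{relaxed} functional $u\mapsto\int_\Omega|DS(u)|+\int_{\partial\Omega}|S_\infty-S(u)|\,d\mathcal H^{N-1}$, in which the boundary penalization absorbs the unstable trace terms; with that characterization, closedness of $\mathcal A$ follows and m-accretivity is obtained from complete accretivity, closedness, and $L^\infty(\Omega)\subset R(I+\mathcal A)$ (Proposition \ref{auxprop}).

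Two further remarks. Your range argument for $v\in L^\infty(\Omega)$ is more complicated than necessary: once the uniform bound $\|u_n\|_{\infty}\le M$ of Remark \ref{rem} is in hand, for a single $n>M$ the condition $[\z_n,\nu]\in{\rm sign}(n-u_n)$ already collapses to $[\z_n,\nu]=1$, so $(u_n,v-u_n)\in\mathcal A$ \emph{as it stands} — no monotone limit in $n$ is needed (this is exactly how Proposition \ref{auxprop} argues), and in fact the comparison $u_n\le u_{n+1}$ you invoke is not literally an instance of complete accretivity of a single $\mathcal A_n$, since the two functions solve resolvent equations for different operators. Finally, your barrier $\tilde\z=-\nabla d$ is delicate under the paper's hypotheses: with only a Lipschitz boundary plus a uniform interior ball, $\div(-\nabla d)$ need not be a bounded function near $\partial\Omega$. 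The paper's Lemma \ref{complemma} instead compares on interior balls with the explicit field $\overline\z(x)=\frac{x-x_0}{s}$, which needs no regularity of the distance function and still yields the uniform bound $\|u\|_{\lio}\le\|v\|_{\lio}+N/s_0$. Your density argument (resolvent convergence as $\lambda\to0^+$ on smooth data) is fine in spirit and matches the paper's computation with $w=v$ in (\ref{1ellipticdef}).
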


  To prove this result we need some auxiliary results: a uniform bound on solutions given by Remark  \ref{rem} and a characterization of the operator given in Proposition \ref{prop} below.

\begin{definition}
  Given $v\in L^\infty(\Omega)$, we say that $u\in TBV(\Omega)$ is a {distributional} solution  of $v-u=-\div(\frac{Du}{|Du|})$ if there is $\z\in X(\Omega)$ with $\|\z\|_{\lio}\leq 1$ such that $$v-u=-\div \z \,\quad \mbox{in \ } \mathcal D'(\Omega)$$ and $$\int_\Omega (\z,DS(u))=\int_\Omega |DS(u)|\,,\quad {\rm for \ all\ } S\in \mathcal P.$$
  \end{definition}
\begin{lemma}\label{complemma}
  Given $v\in L^\infty(\Omega)$, then any distributional solution $u\in TBV(\Omega)$ of $v-u=-\div(\frac{Du}{|Du|})$ verifies the following $L^\infty$ bound for any ball $B_s(x_0)\subseteq\Omega$ $$\|u\|_{L^\infty(B_s(x_0))}\leq \|v\|_{\lio}+\frac{Per(B_s(x_0))}{|B_s(x_0)|}$$
\end{lemma}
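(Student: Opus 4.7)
The plan is to prove the lemma by a comparison argument: I will compare $u$ on $B_s := B_s(x_0)$ with the explicit constant barrier $w \equiv V + \mathrm{Per}(B_s)/|B_s|$, where $V := \|v\|_{L^\infty(\Omega)}$. The point is to exhibit $w$ as the "saturated" solution of the same PDE on the ball, with the Anzellotti vector field $\z_w(x):=(x-x_0)/s$. A direct check gives $|\z_w|\leq 1$, $\mathrm{div}\,\z_w = N/s = \mathrm{Per}(B_s)/|B_s|$, and $[\z_w,\nu_{B_s}]\equiv 1$ on $\partial B_s$. Since $w$ is constant one trivially has $(\z_w,Dw)=0=|Dw|$, so the pair $(w,\z_w)$ satisfies on $B_s$ the same type of relation that $(u,\z)$ satisfies, but with the boundary trace pushed to its maximal value $+1$.

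Starting from $u - v = \mathrm{div}(\z)$ and $w - V = \mathrm{div}(\z_w)$ on $B_s$, I subtract and test against $\xi := T_k((u-w)^+)$ (a truncation is necessary because a priori $(u-w)^+$ is only in $L^1(B_s)$, not $L^2$). Anzellotti's generalised Green formula on $B_s$ then yields
\begin{equation*}
\int_{B_s}\!\big((u-w)-(v-V)\big)\xi\,dx \;=\; -\int_{B_s}(\z-\z_w,D\xi)\;+\;\int_{\partial B_s}\!\xi\,[\z-\z_w,\nu_{B_s}]\,d\mathcal{H}^{N-1}.
\end{equation*}
I analyze the three sign contributions separately. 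For the LHS: an elementary pointwise computation gives $(u-w)\xi\geq \xi^2$ (when $u-w\leq k$ then $\xi=u-w$; when $u-w>k$ then $\xi=k$ and $(u-w)\xi\geq k\xi=\xi^2$), and $-(v-V)\xi\geq 0$ because $v\leq V$, so the LHS dominates $\int_{B_s}\xi^2\,dx$. For the boundary term on the RHS: the bound $[\z,\nu_{B_s}]\leq\|\z\|_{L^\infty}\leq 1=[\z_w,\nu_{B_s}]$ together with $\xi\geq 0$ makes it nonpositive. For the interior pairing on the RHS: the standard monotonicity of the $1$-Laplacian pairing, namely $(\z-\z_w,D(u-w))=\bigl(|Du|-(\z_w,Du)\bigr)+\bigl(|Dw|-(\z,Dw)\bigr)\geq 0$ as measures, plus the chain rule that localises this to $\{0<u-w<k\}$, gives $(\z-\z_w,D\xi)\geq 0$. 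Collecting signs: $\int_{B_s}\xi^2\,dx\leq \text{LHS}=\text{RHS}\leq 0$, hence $T_k((u-w)^+)\equiv 0$. Letting $k\to\infty$ yields $u\leq w$ a.e.\ on $B_s$, which is exactly the claim.

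The main technical obstacle is the justification of the monotonicity identity $(\z-\z_w,D(u-w))\geq 0$ together with the chain rule needed to apply Anzellotti's Green formula to the truncation $T_k((u-w)^+)$ rather than to $(u-w)^+$ itself. Since $u$ is only in $TBV(\Omega)$, one cannot evaluate pairings directly against $u$ or $u-w$; rather, one must work with $T_h(u)$ and pass to the limit. This requires combining the defining identity $(\z,DS(u))=|DS(u)|$ for every $S\in\mathcal{P}$ with the Anzellotti absolute-continuity bound \eqref{acota} to push the pairings through the composition with $T_k\circ(\,\cdot\,)^+$, and then interchanging the truncation in $k$ with that in $h$. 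Everything else in the argument (the construction of the explicit barrier, the Green formula, and the boundary trace inequality) is an essentially routine adaptation of the machinery already recalled in the preliminaries.
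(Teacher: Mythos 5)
Your proposal is correct and follows essentially the same route as the paper: comparison with the explicit barrier $\|v\|_{L^\infty(\Omega)}+\mathrm{Per}(B_s)/|B_s|$ paired with the radial field $(x-x_0)/s$, a truncated positive part of the difference as test function, Anzellotti's Green formula on $B_s$, the boundary sign from $[\z,\nu]\leq 1=[\z_w,\nu]$, and the interior sign from the defining identity $(\z,DS(u))=|DS(u)|$ together with the bound \eqref{acota}. The only (immaterial) differences are that you test with $T_k((u-w)^+)$ after subtracting the two equations, whereas the paper tests each equation separately with $(T_k(u)-T_k(\overline u))^+$ and adds, and that the one-sidedness of your conclusion $u\leq w$ suffices only because $u\geq 0$ by the definition of $TBV(\Omega)$, a point the paper leaves implicit as well.
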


\begin{proof}
  First of all we note that considering $\overline u(x):=\|v\|_{\lio}+\frac{Per(B_s(x_0))}{|B_s(x_0)|}$, $\overline \z(x):=\frac{x-x_0}{s}$, we have: \begin{equation}\label{compball}\|v\|_{\lio}-\overline u=-\div \overline \z\end{equation} in $\mathcal D'(B_s(x_0))$, and $[\overline \z, \nu^{B_s}]=1$. We multiply \rife{compball}  by $(T_k(u)-T_k(\overline u))^+$ and integrate in $B_s(x_0)$,  to obtain $$\int_{B_s(x_0)}(T_k(u)-T_k(\overline u))^+(\|v\|_{\lio}-\overline u)=\int_{B_s(x_0)}(\overline \z,D(T_k(u)-T_k(\overline u))^+)$$$$-\int_{\partial(B_s(x_0))}(T_k(u)-T_k(\overline u))^+\, d\mathcal H^{N-1}\,.$$
  On the other hand, since $v-u=-\div(\z)$ in $\mathcal D'(\Omega)$, we multiply this equation by $-(T_k(u)-T_k(\overline u))^+$, and integrating in $B_s(x_0)$ we get $$-\int_{B_s(x_0)}(T_k(u)-T_k(\overline u))^+(v-u)=-\int_{B_s(x_0)}(\z,D(T_k(u)-T_k(\overline u))^+)$$$$+\int_{\partial(B_s(x_0))}[\z,\nu^{B_s}](T_k(u)-T_k(\overline u))^+\, d\mathcal H^{N-1}.$$ Adding both equalities we get, $$\int_{B_s(x_0)} (T_k(u)-T_k(\overline u))^+(\|v\|_{\lio}-v+(u-\overline u))$$$$=-\int_{B_s(x_0)}(\z-\overline \z,D(T_k(u)-T_k(\overline u))^+)-\int_{\partial(B_s(x_0))}(1-[\z,\nu^{B_s}])(T_k(u)-T_k(\overline u))^+\,d\mathcal H^{N-1}\leq 0\,,$$
  where in the last inequality we used \rife{acota}. Therefore, letting $k\to\infty$ we obtain the desired result.
  \end{proof}

  \begin{remark} \label{rem}Since $\Omega$ satisfies a uniform interior ball condition, by Lemma \ref{complemma} we have that given $v\in L^\infty(\Omega)$ any distributional solution $u\in TBV(\Omega)$ of $v-u=\div(\frac{Du}{|Du|})$ is uniformly bounded; i.e. it satisfies $$\|u\|_{\lio}\leq \|v\|_{\lio}+\frac{N}{s_0}\,,$$where $s_0$ is given by the uniform interior ball condition.\end{remark}

The following characterization of $\mathcal A$ is essential to prove Theorem \ref{thA} and its proof reduces to the one of Proposition 2 in \cite{ACM}:

  \begin{proposition}\label{prop}
    $(u,v)\in \mathcal A$ iff $u,v\in L^1(\Omega)$, $u\in TBV^+(\Omega)$ and there exists $\z\in X(\Omega)$ with $\|\z\|_{\lio}\leq 1$, $v=-\div \z $ in $\mathcal D'(\Omega)$ such that
    \begin{equation}
      \label{1ellipticdef}\int_\Omega (w-S(u))v\leq \int_\Omega (\z,Dw)-\int_\Omega |D S(u)|-\int_{\partial\Omega}(w-S(u))\,d\mathcal H^{N-1}
    \end{equation}for all $w\in BV(\Omega)\cap L^\infty(\Omega)$ and $S\in\mathcal P$.
  \end{proposition}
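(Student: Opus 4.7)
The proof proceeds by establishing the two implications, and amounts to showing the equivalence between the pointwise/equality formulation of Definition \ref{52} and the variational inequality formulation, in complete analogy with Proposition 2 of \cite{ACM}.

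For the forward implication, assume $(u,v)\in\mathcal{A}$ with associated vector field $\z$. Given any $w\in BV(\Omega)\cap L^\infty(\Omega)$ and $S\in\mathcal P$, note that $S(u)\in BV(\Omega)\cap L^\infty(\Omega)$ by Lemma \ref{CR} (and by the boundedness coming from the compact support of $S'$), so $w-S(u)$ is an admissible argument for Green's formula \rife{Green}. Applying that formula to $w-S(u)$ and $\z$ gives
$$\int_\Omega (w-S(u))v\,dx=-\int_\Omega (w-S(u))\,\div(\z)\,dx=\int_\Omega (\z,D(w-S(u)))-\int_{\partial\Omega}[\z,\nu](w-S(u))\,d\mathcal H^{N-1}.$$
Decomposing $(\z,D(w-S(u)))=(\z,Dw)-(\z,DS(u))$ and substituting both defining conditions $[\z,\nu]=1$ and $\int_\Omega(\z,DS(u))=\int_\Omega|DS(u)|$ yields exactly the claimed identity (hence in particular the stated inequality).

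For the converse, assume the inequality. First I would recover the equality $\int_\Omega(\z,DS(u))=\int_\Omega|DS(u)|$ for every $S\in\mathcal P$: plug $w=S(u)$ into the inequality (the boundary term vanishes) to get $0\leq \int_\Omega(\z,DS(u))-\int_\Omega|DS(u)|$, and combine this with the reverse inequality coming from \rife{acota} (since $\|\z\|_\infty\leq 1$) to force equality. With this equality in hand, rewrite the left-hand side $\int_\Omega(w-S(u))v\,dx$ using Green's formula exactly as in the first paragraph; everything cancels against the assumed inequality except the boundary contribution, leaving
$$\int_{\partial\Omega}(1-[\z,\nu])(w-S(u))\,d\mathcal H^{N-1}\leq 0$$
for every $w\in BV(\Omega)\cap L^\infty(\Omega)$ and every $S\in\mathcal P$. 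Since $|[\z,\nu]|\leq\|\z\|_\infty\leq 1$, the factor $1-[\z,\nu]$ is nonnegative $\mathcal H^{N-1}$-a.e.; letting the trace of $w$ range over a dense set of $L^\infty(\partial\Omega)$ of both signs, we conclude $1-[\z,\nu]=0$ $\mathcal H^{N-1}$-a.e., which is precisely the boundary condition in Definition \ref{52}.

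The main obstacle is essentially bookkeeping: one must verify at each step that $S(u)\in BV(\Omega)\cap L^\infty(\Omega)$ and that the trace class on $BV\cap L^\infty$ is rich enough to separate $1-[\z,\nu]$ from zero pointwise on $\partial\Omega$. A mildly delicate point, compared with \cite{ACM}, is that here $u$ itself need not belong to $BV(\Omega)$ (only $T_k(u)$ does, since $u\in TBV^+(\Omega)$), so all manipulations must be performed through composition with $S\in\mathcal P$ and Lemma \ref{CR}; the boundary trace of $u$ never enters directly, consistently with the $+\infty$ boundary datum.
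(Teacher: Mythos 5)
Your proof is correct, and it is in substance the argument the paper intends: the paper gives no details here, merely asserting that the proof ``reduces to the one of Proposition 2 in \cite{ACM}'', and what you write out is exactly that reduction, simplified by the fact that in Definition \ref{52} the boundary condition is the exact equality $[\z,\nu]=1$ rather than a sign inclusion as in \cite{ACM}. Both implications check out: Green's formula \rife{Green} applied to $w-S(u)$ (legitimate since $S(u)\in BV(\Omega)\cap L^\infty(\Omega)$ by Lemma \ref{CR} and $\div\z=-v\in L^1(\Omega)$) yields the identity in the forward direction, while in the converse the choice $w=S(u)$ combined with \rife{acota} recovers $\int_\Omega(\z,DS(u))=\int_\Omega|DS(u)|$, and the resulting boundary inequality $\int_{\partial\Omega}\bigl(1-[\z,\nu]\bigr)\bigl(w-S(u)\bigr)\,d\H^{N-1}\le 0$ then forces $[\z,\nu]=1$. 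For that last step you do not actually need traces to range over a dense subclass of $L^\infty(\partial\Omega)$: taking $w=S(u)+c$ with a constant $c>0$ already gives $\int_{\partial\Omega}\bigl(1-[\z,\nu]\bigr)\,d\H^{N-1}\le 0$, which together with $1-[\z,\nu]\ge 0$ $\H^{N-1}$-a.e.\ (from $\Vert[\z,\nu]\Vert_{L^\infty(\partial\Omega)}\le\Vert\z\Vert_{L^\infty(\Omega;\R^N)}\le 1$) concludes.
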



  \begin{proposition}\label{auxprop}
    $L^\infty(\Omega)\subset R(I+\mathcal A)$ and $D(\mathcal A)$ is dense in $L^1(\Omega)$
  \end{proposition}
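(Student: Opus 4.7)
The plan is to obtain $\mathcal{A}$ as a limit of the operators $\mathcal{A}_n$ from Theorem~\ref{thAn}, using the uniform $L^\infty$ bound supplied by Remark~\ref{rem} together with the integral-inequality characterization of $\mathcal{A}$ given by Proposition~\ref{prop}.

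First I would establish the range condition $L^\infty(\Omega)\subseteq R(I+\mathcal{A})$. Given $v\in L^\infty(\Omega)$, Theorem~\ref{thAn} produces, for every $n\in\NN$, a unique $u_n\in D(\mathcal{A}_n)$ with $(u_n,v-u_n)\in\mathcal{A}_n$; that is, a vector field $\z_n\in X(\Omega)$ with $\|\z_n\|_\infty\leq 1$, $-\div \z_n=v-u_n$ in $\mathcal{D}'(\Omega)$, $[\z_n,\nu]\in{\rm sign}(n-u_n)$ on $\partial\Omega$, and $\int_\Omega (\z_n, DS(u_n))=\int_\Omega|DS(u_n)|$ for every $S\in\mathcal{P}$. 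Two uniform facts play a central role: by complete accretivity of the $\mathcal{A}_n$'s together with the increase of the boundary data, the sequence $\{u_n\}$ is nondecreasing in $n$; and since $u_n$ is a distributional solution of $v-u_n=-\div(Du_n/|Du_n|)$ on $\Omega$, Remark~\ref{rem} (relying on the uniform interior ball condition) delivers the uniform bound $\|u_n\|_{\lio}\leq \|v\|_{\lio}+N/s_0$. Consequently $u_n\nearrow u$ in $L^1(\Omega)$ with $u\in L^\infty(\Omega)$ and, up to a subsequence, $\z_n\rightharpoonup^*\z$ weakly-$*$ in $L^\infty(\Omega;\rn)$ with $\|\z\|_\infty\leq 1$ and $-\div\z=v-u$ in $\mathcal{D}'(\Omega)$.

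Next I would verify $(u,v-u)\in\mathcal{A}$. The boundary condition follows because, once $n>\|v\|_{\lio}+N/s_0$, the trace of $u_n$ on $\partial\Omega$ is strictly below $n$, so ${\rm sign}(n-u_n)$ collapses to $\{1\}$ and $[\z_n,\nu]=1$; applying Green's formula~\rife{Green} against smooth test functions then transfers this to $[\z,\nu]=1$ on $\partial\Omega$. For the Anzellotti pairing equality I would not pass to the limit in the equality directly (since $S(u_n)\to S(u)$ only in $L^1$) but rather work with the integral-inequality formulation of Proposition~\ref{prop}, which $(u_n,v-u_n)$ satisfies, with boundary term $\int_{\partial\Omega}(w-S(u_n)){\rm sign}(n-u_n)\,d\HH^{N-1}$, equal to $\int_{\partial\Omega}(w-S(u_n))\,d\HH^{N-1}$ for $n$ large. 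Then each term admits a favorable limiting behavior: $\int_\Omega(\z_n,Dw)\to\int_\Omega(\z,Dw)$ via~\rife{defmeasx1} and weak-$*$ convergence of $\z_n$ against the fixed Radon measure $Dw$; $-\int_\Omega|DS(u_n)|$ passes to the limit with the correct direction by lower semicontinuity of the total variation under $L^1$-convergence; the remaining terms converge by dominated convergence thanks to the uniform $L^\infty$ bound. This yields~\rife{1ellipticdef} for $(u,v-u)$, hence $(u,v-u)\in\mathcal{A}$ by Proposition~\ref{prop}.

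For the density of $D(\mathcal{A})$ in $L^1(\Omega)$, the $0$-homogeneity of the $1$-Laplacian---$(u,w)\in\mathcal{A}\Leftrightarrow(\mu u,w)\in\mathcal{A}$ for every $\mu>0$---upgrades the range inclusion just obtained to $L^\infty(\Omega)\subseteq R(I+\lambda\mathcal{A})$ for every $\lambda>0$, via the rescaling $u=\lambda\tilde u$. Combined with $L^1$-contractivity of $J_\lambda:=(I+\lambda\mathcal{A})^{-1}$ on $L^\infty$ data and density of $L^\infty(\Omega)$ in $L^1(\Omega)$, the standard argument for m-completely accretive operators (as used in \cite{ACM}) concludes the density. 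The main obstacle throughout is controlling the Anzellotti pairing under the limit procedure: weak-$*$ convergence of $\z_n$ and strong $L^1$-convergence of $u_n$ do not suffice to identify $(\z,DS(u))$ as the weak limit of $(\z_n,DS(u_n))$, and bypassing this via the lower-semicontinuous inequality of Proposition~\ref{prop} is the key idea.
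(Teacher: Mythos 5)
Your range argument actually contains the paper's entire proof inside it and then buries it under an unnecessary and partly unjustified limit procedure. Once you observe that for a single fixed $n>\|v\|_{L^\infty(\Omega)}+N/s_0$ Remark \ref{rem} forces $\|u_n\|_{L^\infty(\Omega)}<n$, so that $[\z_n,\nu]\in{\rm sign}(n-u_n)$ collapses to $[\z_n,\nu]=1$, the pair $(u_n,v-u_n)$ already satisfies Definition \ref{52}; hence $v=u_n+(v-u_n)\in R(I+\mathcal A)$ and you are done --- this is exactly the paper's proof, with no passage to the limit in $n$. The limit you then perform is not only superfluous but rests on shaky steps: the monotonicity of $u_n$ in $n$ is asserted, not proved (complete accretivity of each $\mathcal A_n$ does not by itself compare resolvents of \emph{different} operators); $\int_\Omega(\z_n,Dw)$ is not the integral of $\z_n$ against the measure $Dw$, so weak-$*$ convergence of $\z_n$ in $L^\infty$ does not directly give its convergence (you would instead use Green's formula \eqref{Green} together with the strong $L^1$ convergence of $\div\z_n=u_n-v$ and $[\z_n,\nu]=1$); and, most importantly, the boundary term $\int_{\partial\Omega}(w-S(u_n))\,d\mathcal H^{N-1}$ cannot be handled by dominated convergence, since traces are not continuous under $L^1(\Omega)$ convergence --- the correct tool is the lower semicontinuity of $u\mapsto\int_\Omega|DS(u)|+\int_{\partial\Omega}|S_\infty-S(u)|\,d\mathcal H^{N-1}$, which is precisely what the paper invokes when proving closedness of $\mathcal A$ in Theorem \ref{thA}.

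The density part has a genuine gap. Your homogeneity remark, giving $L^\infty(\Omega)\subseteq R(I+\lambda\mathcal A)$ for every $\lambda>0$, is correct (and makes explicit a point the paper leaves implicit), but m-complete accretivity plus this range property does \emph{not} imply that $D(\mathcal A)$ is dense in $L^1(\Omega)$: nonlinear semigroup theory only yields $J_\lambda v\to v$ as $\lambda\to 0$ for $v\in\overline{D(\mathcal A)}$, which is exactly the set you are trying to identify, so the ``standard argument'' you appeal to is circular, and $L^1$-contractivity of $J_\lambda$ only transfers a convergence you have not yet established on any dense class. What is missing is the quantitative step the paper (following \cite{ACM}) actually performs: for $v\in C_0^\infty(\Omega)$ write the resolvent equation $n(v-u_n)=-\div(\z_n)$ and take $w=v$ (with $S$ the identity on the relevant bounded range of $u_n$) in the characterization inequality \eqref{1ellipticdef}, which gives $\int_\Omega(v-u_n)^2\leq\frac1n\int_\Omega|\nabla v|$, hence $u_n\to v$ in $L^2(\Omega)$ and therefore $C_0^\infty(\Omega)\subseteq\overline{D(\mathcal A)}^{L^1(\Omega)}$. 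Without an estimate of this kind your density claim remains unproved.
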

  \begin{proof}
    Let $v\in L^\infty(\Omega)$ and take $n>>M$ with $M:=\|v\|_{\lio}+\frac{N}{s_0}$ with $s_0$ goven by the uniform interior ball condition. Then, by Theorem \ref{thAn}, there are $u_n\in TBV(\Omega)$ and $\z_n\in X(\Omega)$ such that $v-u_n=-\div (\z_n)$,
   in $\mathcal D'(\Omega)$. Moreover,  $$[\z_n,\nu]\in {\rm sign} (n-u_n)\,,\qquad \mathcal H^{N-1}-{\rm a.e \ in \ } \partial\Omega$$ and for all $S\in \mathcal P,$
    $$\int_\Omega (\z_n,D S(u_n))=\int_\Omega |DS(u_n)|\,.$$

    By Remark \ref{rem}, we have that $\|u_n\|_{\lio}\leq M$. Thus, $$[\z_n,\nu]=1\,,\qquad \mathcal H^{N-1}-{\rm a.e \ in \ } \partial\Omega$$ which implies that $(u_n,v-u_n)\in \mathcal A$.

    \medskip
    In order to prove the density of the domain, we show that $\mathcal C_0^\infty(\Omega)\subseteq \overline{D(\mathcal A)}^{L^1(\Omega)}$. Let $v\in \mathcal C_0^\infty(\Omega)$. Then, $v\in R(I+\frac{1}{n}\mathcal A)$ for all $n\in \N$. Then, there exist $u_n\in D(\mathcal A)$, $\z_n\in X(\Omega)$ with $\|\z_n\|\leq 1$ such that $n(v-u_n)=-\div (\z_n)$ in $\mathcal D'(\Omega)$ such that
    $$n\int_\Omega (w-u_n)(v-u_n)\leq \int_\Omega (\z_n,Dw)-\int_\Omega |D u_n|-\int_{\partial\Omega}(w-u_n)\,d\mathcal H^{N-1}\,,$$ for all $w\in BV(\Omega)\cap L^\infty(\Omega)$. Taking $w=v$ in the previous inequality we get $$\int_\Omega (v-u_n)^2\leq \frac{1}{n}\int_\Omega |\nabla v|.$$ Then, $u_n\to v$ in $L^2(\Omega)$ and thus $v\in \overline{D(\mathcal A)}^{L^1(\Omega)}$.
  \end{proof}

\noindent{\it Proof of Theorem \ref{thA}:} Let $(u,v)$, $(\overline u,\overline v) \in \mathcal A$ and $p\in \mathcal P_0$  and consider $p_\infty:=\lim_{r\to +\infty}p(r)$. Let $\z,\overline \z\in X(\Omega)$ such that $\|\z\|_{\lio},\|\overline \z\|_{\lio}\leq 1$, $v=-\div \z$, $\overline v=-\div(\overline \z)$ in $\mathcal D'(\Omega)$ and such that for any $w\in BV(\Omega)\cap L^\infty(\Omega)$ and any $S\in\mathcal P$,
\begin{equation}\label{pfineq1}\int_\Omega (w-S(u))v\leq \int_\Omega (\z,Dw)-\int_\Omega |DS(u)|-\int_{\partial\Omega}(w-S(u))\,d\mathcal H^{N-1},\end{equation}
\begin{equation}\label{pfineq2}\int_\Omega (w-S(\overline u))\overline v\leq \int_\Omega (\overline \z,Dw)-\int_\Omega |DS(\overline u)|-\int_{\partial\Omega}(w-\overline S(u))\,d\mathcal H^{N-1}\,.\end{equation}Taking $w=S(u)-p(T_k(u)-T_k(\overline u))$ as test function in (\ref{pfineq1}),  $w=S(\overline u)+p(T_k(u)-T_k(\overline u))$ in (\ref{pfineq2}) and adding both inequalities we get, $$\int_\Omega p(T_k(u)-T_k(\overline u))(v-\overline v)\geq \int_\Omega (\z-\overline \z, Dp(T_k(u)-T_k(\overline u)))\geq 0\,.$$ Letting $k\to\infty$, we get that $\mathcal A$ is completely accretive.

Thanks to Proposition \ref{auxprop}, we only have to prove that $\mathcal A$ is closed. Let $(u_n,v_n)\in \mathcal A$ such that $(u_n,v_n)\to (u,v)$ in $L^1(\Omega)\times L^1(\Omega)$. Then, there exist $\z_n\in X(\Omega)$ such that $\|\z_n\|_{\lio}\leq 1$, $v_n=-\div(\z_n)$ in $\mathcal D'(\Omega)$ and $$\int_\Omega (w-S(u_n))v_n\leq \int_\Omega (\z_n,Dw)-\int_\Omega |DS(u_n)|-\int_{\partial\Omega}(w-S(u_n))\,d\mathcal H^{N-1}$$ for every $w\in BV(\Omega)\cap L^\infty(\Omega)$ and $S\in\mathcal P$. Since $\|\z_n\|_{\lio}\leq 1$, we find $\z\in X(\Omega)$ with $\|\z\|_{\lio}\leq 1$ such that $\z_n\to \z$ weakly$^*$ in $L^\infty(\Omega;\R^N)$. Moreover, since $v_n \to v$ in $\luo$,  we have  $v=-\div(\z)$. Finally, having in mind the lower semicontinuity of the functional given by $$u\mapsto \int_\Omega |DS(u)|+\int_{\partial\Omega}|S_\infty-S(u)|\,d\mathcal H^{N-1}$$ where $S_\infty:=\lim_{r\to+\infty}S(r)$ and taking limits we get $$\int_\Omega (w-S(u))v\leq \int_\Omega (\z,Dw)-\int_\Omega |DS(u)|-\int_{\partial\Omega}(w-S(u))\,d\mathcal H^{N-1}\,,\hfill $$
that concludes the proof. \hfill$\qed$

\subsection{The semigroup solution}

By Theorem \ref{thA}, according to Crandall-Ligget's Generation Theorem, for every initial datum $u_0\in L^1(\Omega)$, there exists a unique mild solution $u\in C(0,T;L^1(\Omega))$ of the evolution problem \begin{equation}\label{semipr}\frac{d u}{d t}+\mathcal A\ni 0\,,\quad u(0)=u_0\end{equation} given by $u(t)=\mathcal{S}(t)u_0$, where $(\mathcal{S}(t))_{t\geq 0}$ is the contraction semigroup in $\luo$ generated by $\mathcal{A}$ which is given by the exponential formula $$S(t)u_0=\lim_{n\to\infty}(I+\frac{t}{n}\mathcal A)^{-n}u_0.$$

Moreover, since $\mathcal A$ is completely accretive, if the initial datum $u_0\in D(\mathcal A)\cap \lio$, then the mild solution is a strong solution; i.e.

\begin{lemma}\label{mildlemma}
  Let $u_0\in D(\mathcal A)\cap L^\infty(\Omega)$, and let $u(t)=\mathcal{S}(t)u_0$ be the mild solution of (\ref{semipr}). Then, $u\in L^1(0,T;BV(\Omega)\cap L^\infty(\Omega))\cap W^{1,1}(0,T;L^1(\Omega))$ for every $T>0$ and there exists  $\z(t)\in X(\Omega)$ a.e. $t\in [0,T]$ verifying \begin{equation}
    \label{linftypr}\int_\Omega u'(t)(S(u(t))-w)\leq \int_\Omega (\z(t),Dw)-\int_\Omega |DS(u(t))|-\int_{\partial\Omega}(w-S(u))\,d\mathcal H^{N-1} \end{equation}for every $w\in BV(\Omega)\cap L^\infty(\Omega)$, $S\in\mathcal P$ and a.e. $t\in [0,T]$. Moreover, $u(t)$ is characterized as follows: there exists $\z(t)\in X(\Omega)$, $\|\z\|_{\lio}\leq 1$, $u'(t)=\div \z(t)$ in $D'(\Omega)$ a.e $t\in[0,T]$, and $$\int_\Omega (\z(t),DS(u(t)))=\int_\Omega |DS(u(t))|\,,\quad \forall S\in\mathcal P\,,$$$$[\z(t),\nu]=1\,,\qquad \mathcal H^{N-1}-{\rm a.e. \ on \ }\partial\Omega \ \ {\rm and \ a.e. \ }t\in [0,T]$$

\end{lemma}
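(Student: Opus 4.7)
The plan is to apply the abstract theory of $m$-completely accretive operators in $L^1(\Omega)$ afforded by Theorem \ref{thA}, and then translate the resulting strong semigroup solution into the PDE form of the statement using Proposition \ref{prop} and Definition \ref{52}.

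First, since $\mathcal A$ is $m$-completely accretive, the Crandall--Liggett generation theorem produces a mild solution $u(t)=\mathcal{S}(t)u_0 \in C([0,T];L^1(\Omega))$. To propagate the $L^\infty$ bound, I will argue \`a la Lemma \ref{complemma} on the (rescaled) resolvent equation to get $\|J_\lambda f\|_{\lio} \leq \|f\|_{\lio} + \lambda N/s_0$ for every $f\in\lio$; iterating this in the exponential formula $u(t) = \lim_n (I + \frac{t}{n} \mathcal A)^{-n} u_0$ and passing to the limit by Fatou yields
\[
\|u(t)\|_{\lio} \leq \|u_0\|_{\lio} + \frac{tN}{s_0}.
\]

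Next, to upgrade the mild solution to a strong one, I will use that $u_0 \in D(\mathcal A)$: picking $v_0 \in \mathcal A u_0$ and invoking the contraction property of the semigroup gives $\|u(t+h) - u(t)\|_{L^1(\Omega)} \leq h \|v_0\|_{L^1(\Omega)}$, so $u \in W^{1,\infty}(0,T; L^1(\Omega))$. By the B\'enilan--Crandall theory for $m$-completely accretive operators in $L^1$ (see \cite{becr}), this Lipschitz trajectory is in fact strongly differentiable a.e.\ in $L^1(\Omega)$, with $-u'(t) \in \mathcal A u(t)$ for a.e.\ $t$. Feeding this inclusion into Proposition \ref{prop} produces, for a.e.\ $t$, a vector field $\z(t) \in X(\Omega)$ with $\|\z(t)\|_{\lio}\leq 1$ such that $u'(t)=\div \z(t)$ in $\mathcal D'(\Omega)$ and the variational inequality \rife{linftypr} holds; the Anzellotti identity $\int_\Omega(\z(t), DS(u(t))) = \int_\Omega |DS(u(t))|$ and the boundary trace $[\z(t),\nu]=1$ then come directly from Definition \ref{52}.

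For the remaining piece of regularity, the uniform $L^\infty$ bound from the first step allows me to take $S = T_k \in \mathcal P$ with $k > \|u\|_{L^\infty(Q)}$ (so that $T_k(u(t)) = u(t)$) and $w=0$ in \rife{linftypr}, giving the pointwise bound
\[
\int_\Omega |Du(t)| \leq -\int_\Omega u(t)\,u'(t)\,dx + \|u\|_{\lio}\mathcal{H}^{N-1}(\partial\Omega),
\]
which, once integrated in $t$ against the $W^{1,\infty}(0,T;L^1(\Omega))$ bound from the previous step, yields $u \in L^1(0,T;BV(\Omega)\cap \lio)$. The only really delicate point in the whole argument is the passage from mild to strong solution in the non-reflexive space $L^1(\Omega)$: it relies crucially on the \emph{complete} (not merely plain) accretivity of $\mathcal A$ and is the content of the theory developed in \cite{becr}; the rest of the argument essentially parallels the analogous result for the operator $\mathcal A_n$ carried out in \cite{ACM}.
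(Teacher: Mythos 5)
Your proposal is correct, and its skeleton (Crandall--Liggett plus the Bénilan--Crandall upgrade to a strong solution for $u_0\in D(\mathcal A)$, then reading off \rife{linftypr} and the characterization from Proposition \ref{prop} and Definition \ref{52}) is the same as the paper's, which simply invokes ``nonlinear semigroup theory'' for those parts. Where you genuinely diverge is the only step the paper proves in detail, the bound $\|u(t)\|_{\lio}\leq\|u_0\|_{\lio}+tN/s_0$: the paper works on the already-constructed strong solution, approximating $u$ by a piecewise constant function built from values $u(t_k)$ at a.e.-differentiability points and applying Remark \ref{rem} along the partition, whereas you prove the rescaled resolvent estimate $\|J_\lambda f\|_{\lio}\leq\|f\|_{\lio}+\lambda N/s_0$ and iterate it through the exponential formula, passing to the limit a.e. Both routes rest on the comparison argument of Lemma \ref{complemma}; yours has the advantage of not needing the strong solution first (so the order of your steps is self-consistent) and of making the telescoping completely transparent, but note that it requires the $\lambda$-scaled form of Remark \ref{rem}, which is not literally stated in the paper --- it does follow by the same comparison with $\overline u=\|f\|_{\lio}+\lambda\,Per(B_s)/|B_s|$ and $\overline\z=(x-x_0)/s$, and you should say so explicitly (the paper's own discretized argument implicitly uses the same rescaling). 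You also spell out the $L^1(0,T;BV(\Omega)\cap\lio)$ membership by taking $w=0$ and $S=T_k$ with $k>\|u\|_{L^\infty(Q)}$ in \rife{linftypr} and using the Lipschitz-in-time bound on $u'$; the paper leaves this implicit, and your explicit derivation is sound since $u(t)\in TBV(\Omega)\cap\lio$ already gives $u(t)=T_k(u(t))\in BV(\Omega)$, so the inequality is a genuine a priori bound rather than a circular one.
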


\begin{proof}The result is a consequence of the nonlinear semigroup theory and it remains only to prove that $u(t)\in L^\infty(\Omega)$ for a.e. $t\in [0,T]$. Let us give the  construction of $u$ in terms of Crandall-Ligget's Theorem.

  Let $u(t)=S(t)u_0$, then
  the set $K$ consisting of the values of $t\in [0,T]$ for which either $u$ is not differentiable at $t$ or $t$ is not a Lebesgue point  for $u'$ or $u'(t)+\mathcal A\not\ni 0$  is null. Then, since $u'\in L^1(0,T;L^1(\Omega))$, for any $\varepsilon>0$, there exists a partition $0=t_0<t_1<\ldots<t_{n-1}\leq T<t_n$ such that $t_k\notin K$, $t_k-t_{k-1}<\varepsilon$ for $k=1,\ldots,n$ and $$\sum_{k=1}^n\int_{t_{k-1}}^{t_k}\|u'(s)-u'(t_k)\|_{\luo}\,ds<\varepsilon\,.$$
  We define $$u_\varepsilon(t):=u_0\chi_{[t_0,t_1]}(t)+\sum_{k=1}^n u(t_k)\chi_{]t_{k-1},t_k]}(t)\,.$$ Then $u_\varepsilon\to u$ in $C(0,T;L^1(\Omega))$. Moreover, by Remark \ref{rem} $$\|u_\varepsilon\|_{\lio}\leq \|u_0\|_{\lio}+\frac{TN}{s_0}$$ which implies \begin{equation}\label{boundtvf}\|u\|_{\lio}\leq M_T:=\|u_0\|_{\lio}+\frac{TN}{s_0}\,.\end{equation}

\end{proof}
\subsection{Existence and uniqueness for $L^1$-initial data}

We are now in the position to prove the existence and uniqueness of an entropy large solution.

Here is the definition of solution  that is the natural adaptation of Definition \ref{deflargeentropy} to the case $p=1$.
\begin{definition}\label{Defentropy}
  A measurable function $u$ is  an entropy large solution of (\ref{large1lp}) in $Q=(0,T)\times \Omega$ if $u\in C(0,T;L^1(\Omega))$, $p(u(\cdot))\in L^1_w(0,T;BV(\Omega))$, $\forall p\in \mathcal P$ and there exist $(\z(t),\xi(t))\in Z(\Omega)$ with $\|\z\|_{\lio}\leq 1$ and $\xi\in (L^1(0,T;BV(\Omega)_2))^*$ such that $\xi$ is the time derivative of $u$ in $(L^1(0,T;BV(\Omega)_2))^*$,
  $\xi=\div \z$ in $(L^1(0,T;BV(\Omega)))^*$, \begin{equation}\label{boundary}[\z(t),\nu]=1\,\quad \mathcal{H}^{N-1}{\rm a.e. \ on \ }\partial\Omega \, \, {\rm and \ a.e \ } t\in (0,T)\end{equation} satisfying
  \begin{equation}
    \label{entropylargetvf}-\int_{Q} j_{S,h,l}(u)\eta_t+\int_{Q}\eta |DS(T_h(u)-T_h(l))|+\int_{Q} S(T_h(u)-T_h(l))\z\cdot\nabla\eta \leq 0\,,
  \end{equation}for all $l\in \R$, $h>0$, $0\leq \eta\in \mathcal D(Q)$, $S\in \mathcal P$ and $j_{S,h,l}(r):=\int_l^r S(T_h(s)-T_h(l))\, ds$.
\end{definition}

\medskip

The main result of this section is the following

\begin{theorem}\label{theoremp1}
  Given $u_0\in L^1(\Omega)$, there exists a unique entropy large solution of (\ref{large1lp}) in $Q$ for all $T>0$. Moreover, if $u, v$ are the entropy large solutions corresponding to the initial data $u_0, v_0$ respectively, then $$\|u(t)-v(t)\|_{\luo}\leq \|u_0-v_0\|_{\luo}$$ for all $t\geq 0$. {Furthermore, if $u_0\in L^\infty(\Omega)$ then \begin{equation}\label{bound}u(t)\leq \|u_0\|_{\lio}+\frac{TN}{s_0}\, \, {\rm  \ a.e \ } t\in (0,T)\,,\end{equation}}
  where $s_0$ is given by the uniform interior ball condition.
\end{theorem}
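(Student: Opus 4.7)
The plan is to build the solution via the semigroup already constructed in the previous subsection and then argue uniqueness via a doubling-of-variables technique. By Theorem \ref{thA}, $\mathcal{A}$ is $m$-completely accretive in $L^1(\Omega)$ with dense domain, so the Crandall--Liggett theorem supplies a contraction semigroup $\mathcal{S}(t)$ and, for every $u_0\in L^1(\Omega)$, a mild solution $u(t):=\mathcal{S}(t)u_0\in C([0,\infty);L^1(\Omega))$ of the abstract evolution problem (\ref{semipr}). The $L^1$-contraction $\|u(t)-v(t)\|_{\luo}\leq \|u_0-v_0\|_{\luo}$ is then immediate from complete accretivity, while the $L^\infty$ bound (\ref{bound}) is precisely the estimate (\ref{boundtvf}) of Lemma \ref{mildlemma} when $u_0\in D(\mathcal{A})\cap L^\infty(\Omega)$; once uniqueness of entropy large solutions is established, both bounds transfer automatically to any entropy large solution.

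To show that the mild solution is an entropy large solution in the sense of Definition \ref{Defentropy}, I would first treat $u_0\in D(\mathcal{A})\cap L^\infty(\Omega)$. Lemma \ref{mildlemma} supplies a field $\z(t)\in X(\Omega)$ with $\|\z\|_{\lio}\leq 1$, satisfying $u'(t)=\div\z(t)$ in $\mathcal{D}'(\Omega)$, $[\z(t),\nu]=1$ on $\partial\Omega$ and the Anzellotti identity $(\z(t),DS(u(t)))=|DS(u(t))|$ as measures for every $S\in\mathcal{P}$. Since $r\mapsto S(T_h(r)-T_h(l))$ is again in $\mathcal{P}$, the same identity holds for this composition, giving $(\z,DS(T_h(u)-T_h(l)))=|DS(T_h(u)-T_h(l))|$. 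Multiplying by $\eta\in\mathcal{D}(Q)$, integrating in time and space and using the Anzellotti integration-by-parts formula (\ref{defmeasx1}) together with $u'=\div\z$ and the chain rule $\partial_t j_{S,h,l}(u)=S(T_h(u)-T_h(l))\,u'$ then yields (\ref{entropylargetvf}) as an \emph{equality}; the spatial boundary contributions vanish since $\eta$ has compact support in $Q$. For a general $u_0\in L^1(\Omega)$ I approximate by $u_{0,n}\in D(\mathcal{A})\cap L^\infty(\Omega)$ with $u_{0,n}\to u_0$ in $L^1(\Omega)$, use the $L^1$-contraction of $\mathcal{S}(t)$ to pass to the limit in the $u_n$'s, and recover the entropy inequality via weak-$*$ compactness of the $\z_n$ in $L^\infty(Q;\R^N)$ together with lower semicontinuity of the total variation.

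The main obstacle is the uniqueness of entropy large solutions, to be obtained by a doubling-of-variables argument patterned on the proof of Theorem \ref{uniqpent}. For two entropy large solutions $u(t,x)$, $v(s,y)$, with associated fields $\z$, $\tilde\z$, I would write (\ref{entropylargetvf}) for $u$ with $l=v(s,y)$ and $S=T_k^+$, the symmetric one for $v$ with $l=u(t,x)$ and $S=T_k^-$, test against the product mollifier $\eta_{m,n}(t,x,s,y)=\rho_m(x-y)\tilde\rho_n(t-s)\phi((t+s)/2)\psi((x+y)/2)$ with $\phi\in\mathcal{D}(0,T)$ and $\psi\in\mathcal{D}(\Omega)$, and add the two inequalities. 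The monotonicity of the operator $\z\mapsto(\z,D(\cdot))$ ensures that the combination of the $|DS(\cdot)|$ terms and the pairing terms has a favorable sign. Dividing by $k$, sending $k\to\infty$, then $m,n\to\infty$ and finally choosing a sequence $\psi_h\nearrow 1_\Omega$, one recovers $\frac{d}{dt}\int_\Omega(u-v)^+\psi_h\,dx\leq 0$, whence $\|u(t)-v(t)\|_{\luo}\leq\|u_0-v_0\|_{\luo}$ and uniqueness. The delicate point, different from the $1<p<2$ case where the blow-up lower bound (\ref{lowerbd}) trivialized the issue, is to justify rigorously that the boundary contributions from cutting off $\psi_h$ near $\partial\Omega$ vanish in the limit; here I would exploit the relaxed boundary condition $[\z,\nu]=[\tilde\z,\nu]=1$ in (\ref{boundary}) and the Green formula (\ref{Green}), so that the two boundary integrals cancel against each other in the symmetric inequalities. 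This boundary cancellation via the Anzellotti trace theory reviewed in Section \ref{preliminaris} is the technical heart of the argument.
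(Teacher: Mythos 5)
Your proposal is correct and follows essentially the same route as the paper: existence via the Crandall--Liggett semigroup generated by $\mathcal A$ (Theorem \ref{thA}), passage from $u_0\in D(\mathcal A)\cap L^\infty(\Omega)$ to general $L^1$ data using the contraction property, weak-$*$ compactness of the $\z_n$ and lower semicontinuity of the total variation, the bound \eqref{bound} read off from \eqref{boundtvf}, and uniqueness by doubling variables with the boundary terms controlled through $[\z,\nu]=[\tilde\z,\nu]=1$ as in \cite{ACM}. The only slip is cosmetic: to recover $(u-v)^+$ one keeps $T_k^+$, sends $h\to\infty$ and $m,n\to\infty$ first, and at the end divides by $k$ and lets $k\to 0$ (dividing by $k$ and sending $k\to\infty$ would kill the terms), exactly as in the paper's argument.
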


\begin{proof} The proof of existence is an easier version of the proof of Theorem 1 in \cite{ACM}. { For the convenience of the reader, we give the main steps.

\medskip
Let $u_0\in L^1(\Omega)$ and $\{\mathcal{S}(t)\}_{t\geq 0}$ be the contraction semigroup in $L^1(\Omega)$ generated by $\mathcal A$. We prove now that the mild solution $u(t)=\mathcal{S}(t)u_0$ is also an entropy solution of (\ref{large1lp}). The proof is divided into several steps.

\medskip

\noindent{\it Step 1. Approximating Sequences.}  Since $D(\mathcal A)\cap L^\infty(\Omega)$ is dense in $L^1(\Omega)$, given $u_0\in L^1(\Omega)$, there is a sequence $u_{0,n}\in D(\mathcal A)\cap L^\infty(\Omega)$ such that $u_{0,n}\to u_0\in L^1(\Omega)$. Denoting $u_n(t):=\mathcal{S}(t)u_{0,n}$, we have that $u_n\to u\in C([0,T];L^1(\Omega))$ for all $T>0$. By Lemma \ref{mildlemma}, $u_n\in L^1(0,T;BV(\Omega)\cap L^\infty(\Omega))\cap W^{1,1}(0,T;L^1(\Omega))$ for all $T>0$ and there exists $\z_n(t)\in X(\Omega)$, $\|\z_n(t)\|_{\lio}\leq 1$, $u_n'(t)=\div(\z_n(t))$ in $\mathcal D'(\Omega)$ a.e. $t\in [0,+\infty[$ such that \begin{equation}
  \label{anzeq}\int_\Omega(\z_n(t),DS(u_n(t)))=\int_\Omega |DS(u_n(t))|\,,\quad \forall S\in \mathcal P
\end{equation}
\begin{equation}
  \label{boundaryt}[\z_n(t),\nu]=1\,\quad \mathcal H^{N-1}{\rm -a.e. \ on \ }\partial\Omega
\end{equation}

From here, \eqref{bound} is a direct consequence of (\ref{boundtvf}).
\medskip

\noindent{\it Step 2. Convergence of the sequences and identification of the limit.} Working exactly as in Steps 2, 4 and 5 of Theorem 1 and Lemma 2 in  \cite{ACM}, we prove that \begin{itemize}\item $\{u'_n\}_{n\in\N}$ is a bounded sequence in $L^\infty(0,T;BV(\Omega)_2^*)$ and therefore there is a net $\{u'_\alpha\}$ such that $$u'_\alpha\to \xi \ \in (L^1(0,T;BV(\Omega)_2))^* \quad {\rm weakly}^*.$$ \item There is $\z(t)\in X(\Omega)$ with $\|\z(t)\|_{\lio}\leq 1$ such that  $\z_n\to \z\in L^\infty(Q;\R^N)$ weakly$^*$. \item $\xi(t)=\div(\z(t) )$ in $\mathcal D'(\Omega)$ a.e. $t\in [0,T].$ \item $\xi$ is the time derivative of $u$ in the sense of Definition \ref{Def2}.\item $[\z(t),\nu]=\lim_{\alpha}[\z_\alpha(t),\nu]=1$,  $\mathcal H^{N-1}$-a.e. on $\partial\Omega$, a.e. $t\in [0,T]$. \item $\xi=\div(\z)$ in $(L^1(0,T;BV(\Omega)_2))^*$ in the sense of Definition \ref{Def3}. \end{itemize}

\medskip
\noindent{\it Step 3. Entropy inequality.}
Let $0\leq \eta\in \mathcal D(Q)$, $S\in\mathcal P$ and $h,l>0$. Since $u'_n(t)=\div(\z_n(t))$, we multiply this equation by $\eta(t)S(T_h(u_n(t)-T_h(l)))$. Integrating in $Q$ we get:

$$\int_0^T\int_\Omega \frac{d}{dt} j_{S,h,l}(u_n(t))\eta(t)= \int_0^T\int_\Omega \div(\z_n(t))S(T_h(u_n(t)-T_h(l)))\eta(t)$$$$=-\int_0^T\int_\Omega (\z_n(t), D(S(T_h(u_n(t)-T_h(l)))\eta(t))=-\int_0^T\int_\Omega \eta(t)|D(S(T_h(u_n(t)-T_h(l)))|$$$$-\int_0^T\int_\Omega S(T(T_h(u_n(t))-T_h(l)))\z_n(t)\cdot\nabla \eta(t)\,.$$
Therefore, $$\int_0^T\int_\Omega \eta(t)|D(S(T_h(u_n(t)-T_h(l)))|$$$$=-\int_0^T\int_\Omega S(T(T_h(u_n(t))-T_h(l)))\z_n(t)\cdot\nabla \eta(t)+\int_0^T\int_\Omega j_{S,h,l}(u_n(t))\eta_t(t)\,.$$
By the lower semicontinuity of the total variation, letting $n\to\infty$ we finally conclude that $$\int_0^T\int_\Omega \eta(t)|D(S(T_h(u(t)-T_h(l)))|$$$$\leq-\int_0^T\int_\Omega S(T(T_h(u(t))-T_h(l)))\z(t)\cdot\nabla \eta(t)+\int_0^T\int_\Omega j_{S,h,l}(u(t))\eta_t(t).$$

}

\medskip

{We give now a sketch of the proof of uniqueness which} follows closely the proof of Theorem \ref{uniqpent}.

 Suppose that $u=u(t,x)$, $\z=\z(t,x)$ and $v=v(s,y)$, $\overline \z=\z(s,y)$ and take $l_1=v(s,y)$, $S=T_k^+$ as the constant and the truncation in (\ref{entropylargetvf}) for $u$ and $l_2=u(t,x)$, $S=T_k^-$ for $v$. Then,

  $$\int_{Q} \eta |D_xT_k^+(T_h(u(t,x))-T_h(v(s,y)))|\, dt$$$$+\int_{Q} T_k^+(T_h(u(t,x))-T_h(v(s,y)))\z(t,x)\cdot\nabla_x \eta \,dx \,dt$$$$\leq\int_{Q} j_{T_k^+,h,v(s,y)}(u(t,x))\eta_t\, dx\, dt$$
  for all $k,h>0$, $0\leq\eta\in \mathcal D(Q)$ and
   $$\int_{Q} \eta |D_yT_k^-(T_h(v(s,y))-T_h(u(t,x)))| \, ds$$$$+\int_{Q} T_k^-(T_h(v(s,y))-T_h(u(t,x)))\overline \z(s,y)\cdot\nabla_y \eta \,dy \,ds$$$$\leq\int_{Q} j_{T_k^-,h,u(t,x)}(v(s,y))\eta_s\, dy\, ds$$ for all $k,h>0$, $0\leq\eta\in \mathcal D(Q)$.

   Let $0 \leq \phi \in
{\mathcal D}(0, {T})$, $0\leq \psi\in \mathcal D(\Omega)$, $\rho_m$ a classical sequence of mollifiers in $\Omega$ and $\tilde{\rho}_n$ a sequence of mollifiers in $\R$. Define
$$\eta_{m,n}(t, x, s, y):= \rho_m(x - y) \tilde{\rho}_n(t - s) \phi
\bigg(\frac{t+s}{2} \bigg)\psi\left(\frac{x+y}{2}\right).$$

Integrating the equations above in the other two variables and adding up both inequalities we get:
$$\int_{Q\times Q}\eta_{m,n}|D_x T_k^+(T_h(u)-T_h(v))|+\int_{Q\times Q}\eta_{m,n}|D_y T_k^+(T_h(u)-T_h(v))|$$$$+\int_{Q\times Q}(\z(t,x)-\z(s,y))\cdot(\nabla_x+\nabla_y)\eta_{m,n}T_k^+(T_h(u)-T_h(v))$$$$
+\int_{Q\times Q}\overline \z(s,y)\cdot\nabla_x \eta_{m,n}T_k^+(T_h(u)-T_h(v))-\int_{Q\times Q}\z(t,x)\cdot\nabla_y \eta_{m,n}T_k^+(T_h(u)-T_h(v))$$$$\leq \int_{Q\times Q}j_{T_k^+,h,v(s,y)}(u)(\eta_{m,n})_t+j_{T_k^-,h,u(x,t)}(v)(\eta_{m,n})_s$$

Now, by Green's formula, $$\int_{Q\times Q}\eta_{m,n}|D_x T_k^+(T_h(u)-T_h(v))|+\int_{Q\times Q}|D_y T_k^+(T_h(u)-T_h(v))|$$$$+\int_{Q\times Q}\overline \z(s,y)\cdot\nabla_x\eta_{n,m} T_k^+(T_h(u)-T_h(v))-\int_{Q\times Q}\z(t,x)\cdot\nabla_y \eta_{n,m} T_k^+(T_h(v)-T_h(u))$$$$
=\int_{Q\times Q}\eta_{m,n}|D_x T_k^+(T_h(u)-T_h(v))|+\int_{Q\times Q}\eta_{m,n}(\overline \z(s,y),D_x T_k^+(T_h(u)-T_h(v)))$$$$+\int_{Q\times Q}\eta_{m,n}|D_y T_k^+(T_h(u)-T_h(v))|-\int_{Q\times Q}\eta_{m,n}(\z(t,x),D_y T_k^+(T_h(u)-T_h(v)))\geq 0$$
Therefore,

$$\int_{Q\times Q}(\z(t,x)-\overline \z(s,y))\cdot(\nabla_x+\nabla_y)\eta_{m,n}T_k^+(T_h(u)-T_h(v))$$$$\leq \int_{Q\times Q}j_{T_k^+,h,v}(u)(\eta_{m,n})_t+j_{T_k^-,h,u}(v)(\eta_{m,n})_s$$

Passing to the limit when $h\to+\infty$ we get,
$$-\int_{Q\times Q}j_{T_k^+}(u(t,x)-v(s,y))((\eta_{m,n})_t+(\eta_{m,n})_s)$$$$+\int_{Q\times Q}(\overline \z(s,y)-\z(t,x))\cdot(\nabla_x+\nabla_y)\eta_{m,n}T_k^+(u-v)\leq 0\,,$$where $\displaystyle J_T(r):=\int_0^r T(s)\,ds$.
%
Passing to the limit when $n,m\to\infty$ yields:
\begin{equation}\label{123}-\int_{Q}j_{T_k^+}(u-v)\phi'(t)\psi(x)
 +\int_{Q}\phi(t)(\overline \z-\z)\cdot\nabla\psi(x)T_k^+(u-v)\leq 0\,.\end{equation}Now, working as in the proof of uniqueness of Theorem 1 in \cite{ACM}, and having in mind that $[\z(t),\nu]=[\overline \z(t),\nu]=1$ a.e. $t\in (0,T)$ and $\mathcal H^{N-1}-$a.e. on $\partial\Omega$ we can show that
  $$\lim_{\psi\uparrow {\chi_\Omega}}\int_{Q}\phi(t)(\overline \z-\z)\cdot\nabla\psi(x)T_k^+(u-v)\geq 0\ .$$ Then, letting $\psi\uparrow {\chi_\Omega}$ in \rife{123} we get
 $$-\int_{Q}j_{T_k^+}(u-v)\phi'(t)\leq 0\,,$$ for any $0\leq \phi\in {\mathcal D}(0,T)$. Therefore,
 $$\frac{\partial}{\partial t}\int_{Q}j_{T_k^+}(u-v)\leq 0\,,$$which implies
 $$\int_{\Omega}j_{T_k^+}(u-v)\leq \int_{\Omega}j_{T_k^+}(u_0-v_0)\,.$$Diving last expression by $k$ and letting $k\to 0$ we finally get
 $$\int_{\Omega}(u-v)^+\leq \int_{\Omega}(u_0-v_0)^+\,.$$

\end {proof}

\subsection{Some explicit examples of evolution}

In this section we give two explicit examples of evolution of large solutions for the total variation flow. In the first example, the initial data satisfies the boundary condition in a strong sense (i.e. $\exists \lim_{x\to\partial\Omega}u_0(x)=+\infty$ for all $x\in\partial\Omega$.) In this case, the large solution still verifies this condition.

\begin{example}
  Let us consider  $\Omega=B_1(0)\subset \R^2$ $$u_0(x):=\left\{
  \begin{array}{cc}  0 \quad & {\rm if \ } \|x\|\leq \frac{1}{2} \\
   \\ \log\left(\frac{\|x\|}{1-\|x\|}\right) & {\rm if \ } \frac{1}{2}\leq \|x\|\leq 1
   \end{array}\right.$$

 We look for a solution of \eqref{large1lp} of the form: \begin{equation}
  \label{explicit1}u(t,x)=a(t)\chi_{B_{r(t)}(0)}+b(t,\|x\|)\chi_{\Omega_{r(t)}}
\end{equation}with $\Omega_{r(t)}:=\Omega\setminus B_{r(t)}$,  $0<r(t)$ to be found and
such that $b$ is increasing with respect to its second variable and $a(t)=b(t,r(t))$ a.e $t\in ]0,T]$. {Note that in this case, $$\frac{Du}{|Du|}\chi_{\Omega_{r(t)}}=sign\left(\frac{\partial b}{\partial \|\cdot\|}(t,\|x\|)\right)\frac{x}{\|x\|}\chi_{\Omega_{r(t)}}=\frac{x}{\|x\|}\chi_{\Omega_{r(t)}}$$}

Then, we may define $$\z(t,x):=\left\{
\begin{array}
  {cc}\frac{x}{r(t)} & \quad {\rm if \ } \|x\|\leq r(t) \\ \\\frac{x}{\|x\|} & \quad {\rm if \ } r(t)\leq \|x\|<1
\end{array}
\right.$$ {in order to have that $[\z(t),\nu^{r(t)}]^-=1=[\z(t),\nu^{r(t)}]^+$, denoting respectively the interior and the exterior trace of the normal component of $\z$ with respect to the ball $B_{r(t)}$.}

Then, $${\rm div} \z(x,t)=\left\{\begin{array}
  {cc} \frac{2}{r(t)} & \quad {\rm if \ } \|x\|\leq r(t) \\ \\ \frac{1}{\|x\|} & \quad {\rm if \ } r(t)\leq \|x\|<1\end{array}\right.$$

  Therefore, $$\frac{1}{\|x\|}=b_t(t,\|x\|),$$ which implies that $$b(t,\|x\|)\chi_{\Omega_{r(t)}}=\left(\log\left(\frac{\|x\|}{1-\|x\|}\right)+\frac{t}{\|x\|}\right)\chi_{\Omega_{r(t)}}$$

  Then, $$\frac{2}{r(t)}=a'(t)=b_t(t,r(t))+\frac{\partial b}{\partial r}(t,r(t))r'(t)=\frac{1}{r(t)}+\left(\frac{1}{r(t)(1-r(t))}-\frac{t}{r^2(t)}\right)r'(t)$$Thus, $r(t)$ must solve the following ODE: $$\left(\frac{1}{1-r(t)}-\frac{t}{r(t)}\right)r'(t)=1;$$ with initial condition $r(0)=\frac{1}{2}$. Therefore, $$r(t)=\frac{\mathcal{ W}\left(-\frac{t+1}{2 e^{t+\frac{1}{2}}}\right)}{t+1}+1\,,$$ where $\mathcal{ W}$ is the Lambert W-function (see Figure \ref{figura}).

\begin{figure}[htc]\label{figura}
\includegraphics[width=6cm]{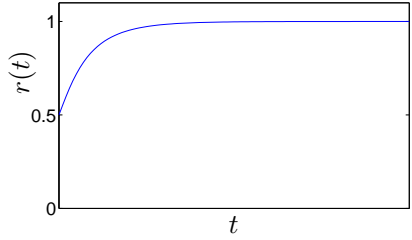}\caption{Evolution of the radius of the interior ball.}\end{figure}

In this case (see Figure \ref{figurasol}), $$u(t,x)=b(t,r(t))\chi_{B_{r(t)}(0)}+b(t,\|x\|)\chi_{\Omega_{r(t)}}$$
\begin{figure}[htc]\label{figurasol} \includegraphics[width=6cm]{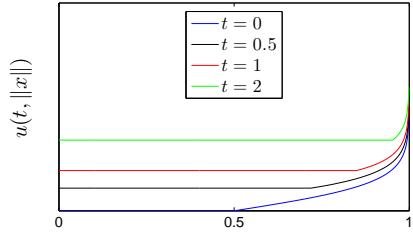}\caption{Large solution in radial coordinates.}\end{figure}

{ Then, $\frac{\partial b}{\partial \|\cdot\|}(t,\|x\|)>0$ and therefore} it is immediate to prove that $u\in L^1(0,T;W^{1,\infty}(\Omega))\cap W^{1,1}(0,T;L^1(\Omega))$, $\z(t)\in X(\Omega)$ a.e $t\in [0,T]$, $u'(t)=\div \z(t)$ a.e. $t\in (0,T)$ and $$\int_\Omega (\z(t),Dp(u(t)))=\int_\Omega |Dp(u(t))|\,,\quad \forall p\in\mathcal P\,,$$$$[\z(t),\nu]=1\,,\qquad {\rm for \ all\   } (t,x)\in [0,T]\times\partial\Omega .$$
From here, it follows that $u$ is the entropy solution of \eqref{large1lp} with $u_0$ as initial data.


\end{example}
{
Second example is totally different in nature. In this case, the initial data is $u_0=0$ and in it it is shown the influence of the domain on the solution. We point out that, as proved in Theorem \ref{theoremp1}, the entropy large solution is uniformly bounded for all $t\in [0,T]$ and then, the boundary condition is fulfilled in the weak sense (\ref{boundary}). Note also that, in general, the solution is a genuine BV-function since it may have jump discontinuities}.
\begin{example}
  Let $C$ be a bounded convex subset of $\R^N$  of class $C^{1,1}$ (in particular, it verifies the uniform interior ball condition). We focus on the following problem:
  \begin{equation}
  \label{large1lpcal}\left\{\begin{array}
    {cc}\displaystyle \frac{\partial u}{\partial t}=\div\left(\frac{Du}{|Du|}\right) & \quad {\rm in \ } (0,T)\times C\\ \\ u(t,x)=+\infty & \quad {\rm on \ } (0,T)\times\partial C \\ \\ u(0,x)=0 & \quad {\rm in \ } C
  \end{array}\right.
\end{equation}

We need to recall the approach and several results given in \cite{acc} which we gather together in the next result:


\begin{theorem}[\cite{acc}]Consider the problem $$(P)_\lambda:=\min_{F\subseteq C} Per(F)-\lambda|F|$$
  Then, there is a convex set $K\subseteq C$ (the Cheeger set, see \cite{ac,ccn} for details) which is a solution of $(P)_{\lambda_K}$ with $\lambda_D:=\frac{Per(D)}{|D|}$  for any $D\subseteq C$. For any $\lambda>\lambda_K$ there is a unique minimizer $C_\lambda$ of $(P)_\lambda$ and the function $\lambda\to C_\lambda$ is increasing and continuous. Moreover, $C_\mu=C$ iff $\mu\geq\max\{\lambda_C,(N-1)\|\mathbb H_C\|_\infty\}$ with $\mathbb H_C(x)$ being the mean curvature of $\partial C$ at the point $x$.
\end{theorem}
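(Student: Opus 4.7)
The plan is a direct-method argument in $BV$ combined with the submodular structure of the perimeter and a first-variation analysis on $\partial C$. For any $\lambda\in\R$, existence of a minimizer of $(P)_\lambda$ is standard: a minimizing sequence $\{F_n\}\subseteq C$ has bounded volumes (by $|C|$) and bounded perimeters (the competitor $F=\emptyset$ has zero energy, which controls $\mathrm{Per}(F_n)$ from above in terms of $\lambda|C|$ when $\lambda>0$; for $\lambda\leq 0$ the empty set is trivially the minimizer). $BV$-compactness and lower semicontinuity of the perimeter then produce a minimizer. Applied to the ratio problem $\inf_{F\subseteq C}\mathrm{Per}(F)/|F|$, finite and positive by the isoperimetric inequality, the same argument yields a Cheeger set $K$, which is a minimizer of $(P)_{\lambda_K}$ with value $0$. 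When $C$ is convex one can take $K$ convex by comparing with its convex hull, which lies in $C$ by convexity and has perimeter no larger than $K$ by a standard property of convex envelopes.

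The main analytic tool for the remaining assertions is the submodularity
$$\mathrm{Per}(F_1\cup F_2)+\mathrm{Per}(F_1\cap F_2)\leq \mathrm{Per}(F_1)+\mathrm{Per}(F_2),$$
together with the measure identity $|F_1\cup F_2|+|F_1\cap F_2|=|F_1|+|F_2|$. If $F_i$ minimizes $(P)_{\lambda_i}$ with $\lambda_1<\lambda_2$, summing the two minimality inequalities and simplifying yields $(\lambda_2-\lambda_1)(|F_1\cap F_2|-|F_1|)\geq 0$, whence $F_1\subseteq F_2$ up to null sets; this is monotonicity. Once uniqueness is in hand, continuity of $\lambda\mapsto C_\lambda$ follows because any $L^1$-limit of a monotone sub-family of minimizers is still a minimizer of the limiting problem, hence coincides with the unique $C_\lambda$. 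Uniqueness for $\lambda>\lambda_K$ is the most delicate point and where I expect the main obstacle: two minimizers $F_1,F_2$ make $F_1\cup F_2$ and $F_1\cap F_2$ minimizers too by the submodular argument, and I would close via the interior regularity of almost-minimizers of the perimeter (De Giorgi--Tamanini theory) to conclude that the reduced boundary of $C_\lambda$ inside $C$ is a $C^{1,\alpha}$ hypersurface of constant mean curvature $\lambda/(N-1)$, then invoke the strong maximum principle for such surfaces to force $F_1=F_2$.

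Finally, for the characterization $C_\mu=C\iff \mu\geq\max\{\lambda_C,(N-1)\|\mathbb{H}_C\|_\infty\}$, the two necessary conditions are read off by testing $C$ against the empty set (giving $\mathrm{Per}(C)-\mu|C|\leq 0$, i.e., $\mu\geq\lambda_C$) and by a small inward normal perturbation of $\partial C$ near a point of maximal mean curvature (giving $\mu\geq (N-1)\mathbb{H}_C(x)$ at every boundary point, pointwise well defined by the $C^{1,1}$ hypothesis). Conversely, when both bounds hold, the outer unit normal of $\partial C$ extends via the $C^{1,1}$ regularity to a vector field $\z\in L^\infty(C;\R^N)$ with $\|\z\|_\infty\leq 1$, $[\z,\nu]=1$ on $\partial C$ and $\mathrm{div}\,\z\leq \mu$ in the distributional sense; integrating this against $\chi_C-\chi_F$ via the generalized Green's formula of Section~\ref{GreenAnz} shows $E_\mu(C)\leq E_\mu(F)$ for every competitor $F\subseteq C$, and the previously established uniqueness then pins down $C$ as the minimizer.
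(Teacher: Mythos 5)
Note first that the paper does not prove this statement at all: it is quoted as a known result from \cite{acc} (with \cite{ac,ccn} for the Cheeger set), so your sketch can only be measured against that literature. The routine parts of your plan are fine: the direct method gives existence of minimizers of $(P)_\lambda$; the submodularity computation does yield $|F_1|\le|F_1\cap F_2|$ and hence monotonicity; necessity of $\mu\ge\lambda_C$ and of $\mu\ge (N-1)\|\mathbb H_C\|_\infty$ follows from testing with $\emptyset$ and from inward normal perturbations; and the calibration inequality $Per(C)-Per(F)\le\int_{C\setminus F}\div\,\z\le\mu\,|C\setminus F|$ is correct \emph{once} a field $\z$ with $\|\z\|_\infty\le 1$, $[\z,\nu]=1$ on $\partial C$ and $\div\,\z\le\mu$ is in hand.

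The gaps are exactly at the points that constitute the cited theorems. (i) Convexity of $K$: it is false in general that the convex hull has smaller perimeter (two small balls far apart, or a thin tube in $\R^3$, have convex hulls of much larger perimeter), and even where it holds you would still need to compare the ratios $Per/|\cdot|$; convexity of the Cheeger set and of the $C_\lambda$ inside a convex body is a substantive result of \cite{ac,acc}, proved by entirely different means (approximation by smooth strictly convex bodies, comparison/maximum principles for the curvature operator), not by a convex-hull replacement. (ii) Uniqueness for every $\lambda>\lambda_K$: your argument never uses $\lambda>\lambda_K$, yet uniqueness fails at $\lambda=\lambda_K$ (both $\emptyset$ and $K$ are minimizers), so it cannot be complete as stated; concretely, after the union/intersection reduction you are left with two \emph{nested} minimizers whose constant-mean-curvature boundaries need not touch inside $C$, and then the strong maximum principle has nothing to bite on --- the pair $\emptyset\subset K$ at $\lambda=\lambda_K$ is precisely such a configuration. (iii) The sufficiency half of the final equivalence: the existence of the calibrating field $\z$ with $\div\,\z\le\mu$ is the core technical content of \cite{acc} (essentially the calibrability of $C$ under the curvature bound). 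The naive extension of the outer normal, e.g.\ $-\nabla d$ with $d$ the distance to $\partial C$, has divergence that increases toward the interior, and there is no elementary reason it can be glued to an interior field while preserving both $|\z|\le1$ and $\div\,\z\le\mu$; in \cite{acc} the field is produced by solving an auxiliary total-variation minimization problem and exploiting convexity and regularity of the level sets, which is where the hypothesis $\mu\ge\max\{\lambda_C,(N-1)\|\mathbb H_C\|_\infty\}$ genuinely enters. So your skeleton identifies the right mechanisms, but the three assertions above are the theorems being cited, not steps one can pass over.
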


Let $K$ be the Cheeger set contained in $C$ defined in the previous result. For each $\lambda\in (0,+\infty)$ let $C_\lambda$ be the minimizer of problem $(P)_\lambda$. We take $C_\lambda=\emptyset$ for any $\lambda<\lambda_K$. Using the monotonicity of $C_\lambda$ and the fact that $|C\setminus\cap\{C_\lambda : \lambda>0\}|=0$ we may define $$H_C(x):=\inf\{\lambda : x\in C_\lambda\}$$


In Theorem 17 in \cite{acc} it is shown that $$v(t,x)=(1-H_C(x)t)^+\chi_C$$ is the entropy solution for the Cauchy problem for the Total Variation with $v_0=\chi_C$ as initial data. In particular, it is obtained a vector field $\xi_C\in X(\R^N)$ with $\|\xi_C\|_{   L^{\infty}(\rn)  }\leq 1$ such that $\div\xi_C=-H_C\chi_C$ in $\mathcal D'(\R^N)$, $$\int_{\R^N} (\xi_C,Dv)=\int_{\R^N}|Dv|$$ and $$[\xi_C,\nu^C]=-1\quad\mathcal H^{N-1}-{\rm a.e. \ on }\ \partial\Omega$$Taking $\z:=-\xi_C\chi_C$ and with the same proof as in \cite{acc}, we can show that $$u(t,x)=H_C(x)t$$ is the entropy large solution of \rife{large1lpcal}. Therefore, the speed of the growth  of the large entropy solution is the speed of decrease of the solution of the corresponding Cauchy problem with $\chi_C$ as initial datum.
\end{example}

{
\begin{remark}
 Let us finally observe that,  in the especial case that $C$ is calibrable (i.e. the Cheeger set $K$ coincides with $C$), then the large solution of (\ref{large1lpcal}) is exactly: $$u(t,x)=\frac{Per(C)}{|C|}t\,.$$
\end{remark}
}



\end{document}